\documentclass[preprint,12pt]{elsarticle}

\usepackage{amssymb}

\usepackage{amsfonts,amsmath,amscd,amsthm,color}

\newfont{\eufm}{eufm10 scaled\magstep1}

\newcommand{\cA}{\mathcal{A}}
\newcommand{\cC}{\mathcal{C}}
\newcommand{\cI}{\mathcal{I}}

\newcommand{\cD}{\mathcal{D}}
\newcommand{\cB}{\mathcal{B}}
\newcommand{\cV}{\mathcal{V}}
\newcommand{\cU}{\mathcal{U}}
\newcommand{\cM}{\mathcal{M}}
\newcommand{\cP}{\mathcal{P}}
\newcommand{\cF}{\mathcal{F}}

\newcommand{\cL}{\mathcal{L}}
\newcommand{\cG}{\mathcal{G}}

\newcommand{\cQ}{\mathcal{Q}}

\newcommand{\cK}{\mathcal{K}}

\newcommand{\bbN}{\mathbb{N}}
\newcommand{\bbZ}{\mathbb{Z}}

\newcommand{\bbQ}{\mathbb{Q}}
\newcommand{\bbI}{\mathbb{I}}

\newcommand{\bbP}{\mathbb{P}}
\newcommand{\bbF}{\mathbb{F}}

\newcommand{\bbH}{\mathbb{H}}
\newcommand{\bbK}{\mathbb{K}}

\newcommand{\bbD}{\mathbb{D}}

\def\para{\vspace{2mm}}
\def\id{{\rm ID}}

\def\dprim{\id{\rm prim}}
\def\dcont{\id{\rm cont}}
\def\dres{\partial{\rm Res}}

\def\dcres{\partial{\rm CRes}}

\def\dfres{\partial{\rm FRes}}

\def\ps{{\rm ps}}
\def\PS{{\rm PS}}

\def\rank{{\rm rank}}
\def\ord{{\rm ord}}

\def\sat{{\rm sat}}
\def\min{{\rm min}}
\def\max{{\rm max}}

\def\gcld{{\rm gcld}}
\def\c{{\rm c}}

\def\remm{{\rm rem}}

\def\sep{{\rm sep}}
\def\ldeg{{\rm ldeg}}

\newtheorem{thm}{Theorem}[section]
\newtheorem{lem}[thm]{Lemma}
\newtheorem{cor}[thm]{Corollary}
\newtheorem{prop}[thm]{Proposition}
\newtheorem{defi}[thm]{Definition}
\newtheorem{rem}[thm]{Remark}
\newtheorem{exs}[thm]{Examples}
\newtheorem{ex}[thm]{Example}


\journal{}

\begin{document}
\begin{frontmatter}
\title{Linear sparse differential resultant formulas}

\author{Sonia L. Rueda}
\address{Dpto. de Matem\' atica Aplicada, E.T.S. Arquitectura.\\
Universidad Polit\' ecnica de Madrid.\\
Avda. Juan de Herrera 4, 28040-Madrid, Spain.}
\ead{sonialuisa.rueda@upm.es}

\begin{abstract}
Let $\cP$ be a system of $n$ linear nonhomogeneous ordinary differential polynomials in a set $U$ of $n-1$ differential indeterminates.
Differential resultant formulas are presented to eliminate the differential indeterminates in $U$ from $\cP$. These formulas are determinants of coefficient matrices of appropriate sets of derivatives of the differential polynomials in $\cP$, or in a linear perturbation $\cP_{\varepsilon}$ of $\cP$.
In particular, the formula $\dfres(\cP)$ is the determinant of a matrix $\cM(\cP)$ having no zero columns if the system $\cP$ is "super essential".
As an application, if the system $\frak{P}$ is sparse generic, such formulas can be used to compute the differential resultant $\dres(\frak{P})$ introduced by Li, Gao and Yuan in \cite{LGY}.
\end{abstract}

\begin{keyword}
differential elimination \sep linear differential polynomial \sep sparse differential
resultant \sep linear perturbation

\MSC[2010] 34G10 \sep 34L99
\end{keyword}

\end{frontmatter}

\section{Introduction}

Elimination theory has proven to be a relevant tool in (differential) algebraic geometry (see \cite{Cox},\cite{Cox-2} and \cite{Bo}). Elimination techniques have been developed using Gr\"{o}bner bases, characteristic sets and (differential) resultants. The algebraic resultant has been broadly studied, regarding theory and computation, some significant references are \cite{GKZ}, \cite{CE}, \cite{St} and \cite{DA}. Meanwhile, its counterpart the differential resultant is at an initial state of development, a survey on this development can be found in the introductions of \cite{GLY} and \cite{Ru11}. Until very recently, the existing definitions of differential resultants for differential polynomials depended on the computation method \cite{CFproc}.
In the recent paper \cite{GLY}, a rigorous definition of the differential resultant $\dres(\frak{P})$, of a set $\frak{P}$ of $n$ nonhomogeneous generic ordinary differential polynomials in $n-1$ differential variables, has been presented: If the elimination ideal, of the differential ideal generated by $\frak{P}$, is $n-1$ dimensional then it equals the saturation ideal of a differential polynomial $\dres(\frak{P})$, the differential resultant of $\frak{P}$.  As in the algebraic case, the object that is naturally necessary for applications is the sparse differential resultant, and this was defined in \cite{LGY}, for a set of nonhomogeneous generic sparse ordinary differential polynomials.

The computation and applicability of sparse algebraic resultants attained great benefits from having close formulas for their representation \cite{DA}, \cite{SY}, \cite{EKK}. These formulas provide bounds for the degree of the elimination output and
ways of exploiting sparseness of the input polynomials on predicting the support of the elimination output. Namely, obtaining the Newton polytope of the resultant \cite{St}, whose support is a superset of the support of the resultant, reduces elimination to an interpolation problem in (numerical) linear algebra, \cite{SY}, \cite{Cue}, \cite{EKK}.

Sparse differential resultants can be computed with characteristic set methods for differential polynomials via symbolic computation algorithms \cite{BLM}, \cite{H}, \cite{GLY}, \cite{Ritt}.
The algorithms in \cite{H} and \cite{BLM} have been implemented in the Maple package {\rm diffalg}, \cite{BH} and in the {\rm BLAD libraries} \cite{BBLAD} respectively.
These methods do not have an elementary complexity bound \cite{GKOS} but,
a single exponential algorithm based on order and degree bounds of the sparse differential resultant has been recently proposed in \cite{LYG}. It would be useful to represent the sparse differential resultant as the quotient of two determinants, as done for the algebraic case in \cite{DA}.
As noted in \cite{LGY} and \cite{LYG}, having similar formulas in the differential case would improve the existing bounds for degree and order of the sparse differential resultant and therefore the existing algorithms for its computation. Matrix formulas would also contribute to the development of methods to predict the support of the sparse differential resultant, achieving similar benefits to the ones obtained in the algebraic case.
A matrix representation of the sparse differential resultant is important because it is the basis for efficient computation algorithms
and their study promises to have a grate contribution to the development and applicability of differential elimination techniques.

In the differential case, these so called Macaulay style formulas do not exist, even in the simplest situation.
The matrices used in the algebraic case to define the  Macaulay style formulas \cite{DA},
are coefficient matrices of sets of polynomials obtained by multiplying the original ones by appropriate sets of monomials, \cite{CE}. In the differential case, in addition, derivatives of the original polynomials should be considered. The differential resultant formula defined by Carr\`{a}-Ferro in \cite{CFproc}, is the algebraic resultant of Macaulay \cite{Mac}, of a set of derivatives of the ordinary differential polynomials in $\cP$. Already in the linear sparse generic case, these formulas vanish often, giving no information about the differential resultant  $\dres(\frak{P})$. A determinantal formula for $2$ generic
differential polynomials of arbitrary degree and order $1$ has been recently presented in \cite{ZYG}.

In this paper, determinantal formulas are provided for systems of $n$ linear nonhomogeneous ( non necessarily generic) differential polynomials $\cP$
 in $n-1$ differential indeterminates.
The linear case can be seen as a previous study to get ready to approach the nonlinear case. One can consider only the problem of taking the appropriate set of derivatives of the elements in $\cP$ and forget about the multiplication by sets of monomials for the moment.

Given $n$ differential polynomials, differential elimination is guaranteed of at most $n-1$ differential variables (see Section \ref{sec-linDPPEs}) but, if there were more, we may decide which ones to consider as part of the coefficients.
Take for instance the Lotka-Volterra equations
\begin{equation*}
\left\{
\begin{array}{c}
x'= \alpha x - \beta xy,\\
y'= \gamma y - \rho xy,
\end{array}
\right.
\end{equation*}
they can be looked at as a system given by two linear differential polynomials in the differential indeterminate $x$, with $\alpha$, $\beta$, $\gamma$ and $\rho$ algebraic constants,
\begin{align*}
&f_1(x)=(\beta y-\alpha)x+x'=a_1 x + a_2 x',\\
&f_2(x)=y'-\gamma y +\rho y x=b_0+b_1 x.
\end{align*}
Elimination of the $x$ differential variable can be achieved by the determinant of the coefficient matrix of $f_1(x)$, $f_2(x)$ and $f'_2(x)$,
\[
d ((y')^2-y y''+a y y'- a c y^2 -b y^2 y'+bc y^3).
\]

In \cite{RS}, the linear complete differential resultant $\dcres (\cP)$ of a set of linear differential polynomials $\cP$ was defined, as an improvement, in the linear case, of the differential resultant formula given by Carr\`{a}-Ferro. Still, $\dcres (\cP)$ is the determinant of a matrix having zero columns in many cases.
An implementation of the differential resultant formulas defined by Carr\`{a}-Ferro and the linear complete differential resultant defined in \cite{R11} is available at \cite{RDR}.

The linear differential polynomials in $\cP$ can be described via differential operators. We use appropriate bounds of the supports of those differential operators to decide on a convenient set $\ps(\cP)$ of derivatives of $\cP$, such that its coefficient matrix $\cM(\cP)$ is squared and, if $\cP$ is super essential (as defined in Section \ref{sec-diff res formulas}), it has no zero columns.
Obviously, $\det(\cM(\cP))$ could still be zero. In such case, we can always provide a linear perturbation $\cP_{\varepsilon}$ of $\cP$ such that $\det(\cM(\cP_{\varepsilon}))\neq 0$, as an adaptation of the perturbation methods described in \cite{R11} (for linear complete differential resultants) to the new formulas presented in this paper.
In the sparse generic case, we can guarantee that the linear sparse differential resultant $\dres(\frak{P})$ can always be computed via the determinant of the coefficient matrix $\cM(\frak{P}_{\varepsilon}^*)$ of a set $\ps(\frak{P}_{\varepsilon}^*)$, of derivatives of the elements in the perturbation of a super essential subsystem $\frak{P}^*$ of $\frak{P}$.

Given a system of linear nonhomogeneous ordinary differential polynomials $\cP$, in Section \ref{sec-Preliminary}, we describe appropriate sets bounding the supports of the differential operators describing the polynomials in $\cP$.
Differential resultant formulas for $\cP$ are given in Section \ref{sec-diff res formulas}. In particular, the formula $\dfres(\cP)$ is defined, for the so called super essential (irredundant) systems, as the determinant of a matrix $\cM(\cP)$ with no zero columns. In Section \ref{sec-irredundant systems of dpls}, it is shown that every system $\cP$ contains a super essential subsystem $\cP^*$, which is unique if $\cP$ is differentially essential. Results on differential elimination for systems $\cP$ of linear differential polynomial parametric equations (linear DPPEs) are given in Section \ref{sec-linDPPEs}, including a perturbation
$\cP_{\varepsilon}$ such that if $\cP$ is super essential then $\dfres(\cP_{\varepsilon})\neq 0$. The methods in Section \ref{sec-linDPPEs} are used in Section \ref{sec-Computation of the sparse} to compute the differential resultant $\dres(\frak{P})$ of a linear nonhomogeneous generic sparse system $\frak{P}$ of
ordinary differential polynomials. As explained in Section \ref{sec-Computation of the sparse}, the differential resultant $\dres(\frak{P})$ exists only for differentially essential systems.

\section{Preliminary notions}\label{sec-Preliminary}
Let $\bbD$ be an ordinary differential domain with derivation
$\partial$. Let us consider the set $U=\{u_1,\ldots ,u_{n-1}\}$ of differential indeterminates over $\bbD$.
By $\bbN_0$ we mean the natural numbers including $0$.
For $k\in\bbN_0$, we denote by $u_{j,k}$ the $k$th derivative of $u_j$ and for $u_{j,0}$ we simply write $u_j$.
We denote by $\{U\}$ the set of derivatives of the elements of $U$,
$\{U\}=\{\partial^k u\mid u\in U,\; k\in \bbN_0\}$, and  by
$\bbD\{U\}$ the ring of differential polynomials in the differential
indeterminates $U$, which is a differential ring with derivation $\partial$,
\begin{displaymath}
\bbD\{U\}=\bbD[u_{j,k}\mid j=1,\ldots ,n-1,\; k\in \bbN_0 ].
\end{displaymath}
Given a subset $\cU\subset\{U\}$, we denote by $\bbD[\cU]$ the ring of polynomials in the indeterminates $\cU$.
Given $f\in\bbD\{U\}$ and $y\in U$, we denote by $\ord(f, y)$ the order of $f$ in the variable $y$. If $f$
does not have a term in $y$ then we define $\ord(f,y) = -1$. The order of $f$ equals $\max\{\ord(f,y)\mid y\in U\}$.

Let $\cK$ be a differential field of characteristic zero with derivation $\partial$ (e.g. $\cK=\bbQ(t)$, $\partial=\partial/\partial t$) and $C=\{c_1,\ldots ,c_n\}$ a set of differential indeterminates over $\cK$. The differential ring $\cK\{C\}$ is an example of differential domain.
By $\cK \langle C\rangle$ we denote the differential field extension of $\cK$ by $C$, the quotient field of $\cK\{C\}$.
The following rankings will be used throughout the paper (see \cite{Kol}, page 75):
\begin{itemize}
\item The order $u_1<\cdots <u_{n-1}$ induces an orderly ranking on $U$ (i.e. an order on $\{U\}$) as follows: $u_{i,j}<u_{k,l}$ $\Leftrightarrow$ $(j,i)<_{\rm lex}(l,k)$. We set $1<u_1$.

\item Let $(i,j),(k,l)\in \bbN_0^2$ be distinct. We write $(i,j)\prec (k,l)$ if $i>k$, or $i=k$ and $j<l$.
The order $c_n<\cdots <c_1$, induces a ranking on $C$, using the monomial order $\prec$: $c_{i,j}<c_{k,l}$ $\Leftrightarrow$ $(i,j)\prec (k,l)$.
\end{itemize}
We call $\frak{r}$ the ranking on $C\cup U$ that eliminates $U$ with respect to $C$, that is $\partial^k x< \partial^{k^\star} u$, for all $x\in C$, $u\in U$ and $k,k^\star\in\bbN_0$.
The previous are all classical concepts in differential algebra and references for them are \cite{Kol} and \cite{Ritt}.

Let $\cP:=\{f_1,\ldots ,f_n\}$ be a system of linear differential polynomials in $\bbD\{U\}$. We assume that:
\begin{enumerate}
\item[($\cP$1)] The order of $f_i$ is $o_i\geq 0$, $i=1,\ldots ,n$. So that no $f_i$ belongs to $\bbD$.
\item[($\cP$2)] $\cP$ contains $n$ distinct polynomials.
\item[($\cP$3)] $\cP$ is a nonhomogeneous system. There exist $a_i\in\bbD$ and $h_i$ homogeneous differential polynomial in $\bbD\{U\}$, such that $f_i(U)=a_i-h_i(U)$ and, for some $i\in\{1,\ldots ,n\}$, $a_i\neq 0$.
\end{enumerate}
We denote by $\bbD [\partial]$ the ring of differential operators with coefficients in $\bbD$.
There exist differential operators $\cL_{i,j}\in\bbD[\partial]$ such that
\[f_i=a_i+\sum_{j=1}^{n-1}\cL_{i,j} (u_j), a_i\in\bbD.\]
We denote by $|S|$ the number of elements of a set $S$.
We call the indeterminates $U$ a set of parameters.
The {\sf number of parameters} of $\cP$ equals
\begin{equation}\label{eq-nu}
\nu (\cP):=|\{j\in\{1,\ldots ,n-1\}\mid \cL_{i,j}\neq 0\mbox{ for some }i\in\{1,\ldots ,n\}\}|.
\end{equation}
In addition, we assume:
\begin{enumerate}
\item[($\cP$4)] $\nu (\cP)=n-1$.
\end{enumerate}

Let $[\cP]_{\bbD\{U\}}$ be the differential ideal generated by $\cP$ in $\bbD\{U\}$ (see \cite{Ritt}). Our goal is to define differential resultant formulas to compute elements of the elimination ideal
\[[\cP]_{\bbD\{U\}}\cap \bbD.\]
The assumption $\nu (\cP)=n-1$ guarantees $[\cP]_{\bbD\{U\}}\cap \bbD\neq \{0\}$ and allows the codimension one possibility, see Section \ref{sec-linDPPEs} and Example \ref{ex-motivation}(1). Nevertheless, in this paper we also deal with subsystems $\cP'$ of $\cP$ such that $\nu (\cP')\neq |\cP'|-1$ and the study of the consequences of the relation between $\nu(\cP')$ and $|\cP'|$ is central to this work.

\begin{exs}\label{ex-motivation}
\begin{enumerate}
\item Let us consider the system $\cP=\{f_1,f_2,f_3\}$ in $\bbD\{u_1,u_2\}$ with
\begin{equation}
\begin{array}{l}
f_1=a_1 + a_{1,1,0} u_1 + a_{1,1,1} u_{1,1} + a_{1,2,1} u_{2,1} + a_{1,2,2} u_{2,2},\\
f_2=a_2 + a_{2,2,2} u_{2,2} + a_{2,2,3} u_{2,3},\\
f_3=a_3 + a_{3,1,1} u_{1,1} + a_{3,2,1} u_{2,1} +a_{3,2,2} u_{2,2}.
\end{array}
\end{equation}
We assume that every coefficient $a_{i,j,k}$ is nonzero. The differential operators describing this system are
\begin{equation}
\begin{array}{ll}
\cL_{1,1}=a_{1,1,0} + a_{1,1,1}\partial, & \cL_{1,2}=a_{1,2,1}\partial + a_{1,2,2}\partial^2,\\
\cL_{2,1}=0,  & \cL_{2,2}=a_{2,2,2}\partial^2 + a_{2,2,3}\partial^3,\\
\cL_{3,1}=a_{3,1,1}\partial,  &\cL_{3,2}=a_{3,2,1}\partial +a_{3,2,2}\partial^2.
\end{array}
\end{equation}
Given $\cP'=\{f_1,f_2\}$, $\nu(\cP')=2=|\cP'|$ and it is easily seen that $[f_1,f_2]_{\bbD\{u_1,u_2\}}\cap \bbD=\{0\}$.
Thus we cannot use $\cP'$ to eliminate $u_1$ and its derivatives but we can eliminate $u_2$ and all its derivatives. Namely if
$\overline{\bbD}=\bbD\{u_1\}$ and if $a_1,a_2$ are differential indeterminates, by \cite{RS}, Algorithm 2 then there exist nonzero differential operators $\cL_1,\cL_2\in\overline{\bbD}[\partial]$ such that $\cL_1(a_1)+\cL_2(a_2)=\cL_1(f_1)+\cL_2(f_2)$ belongs to $[\cP']_{\overline{\bbD}\{u_2\}}\cap\overline{\bbD}\neq \{0\}$.

\item Let us consider a system $\cP=\{f_1=c_1+\cL_{1,1}(u_1),f_2=c_2+\cL_{2,1}(u_1),f_3=c_3+\cL_{3,1}(u_1)\}$ in $\bbD\{u_1\}$, with $\bbD=\cK\{c_1,c_2,c_3\}$ and each $f_i$ of nonzero order. Observe that
    \[\nu (\cP)=1<|\cP|-1=2.\]
    By \cite{RS}, Algorithm 2, there exist nonzero differential operators $\cL_1,\cL_2\in\bbD[\partial]$ such that
    \[R_1=\cL_1(c_1)+\cL_2(c_2)\in [f_1,f_2]_{\bbD\{u_1\}}\cap \bbD\]
    and nonzero $\cD_2,\cD_3\in\bbD[\partial]$ such that
    \[R_2=\cD_2(c_2)+\cD_3(c_3)\in [f_2,f_3]_{\bbD\{u_1\}}\cap \bbD.\]
    Thus $[\cP]_{\bbD\{u_1\}}\cap \bbD$ has codimension greater than one, it is generated at least by two differential polynomials.
\end{enumerate}
\end{exs}

Given a nonzero differential operator  $\cL=\sum_{k\in\bbN_0}\alpha_k\partial^k\in\bbD [\partial]$, let us denote the support of $\cL$ by
$\frak{S} (\cL)=\{k\in\bbN_0 \mid \alpha_k\neq 0\}$, and define
\begin{align*}
\ldeg (\cL):=\min\,\frak{S}(\cL), \deg (\cL):=\max\,\frak{S}(\cL).
\end{align*}
For $j=1,\ldots ,n-1$, we define the next positive integers, to construct convenient intervals bounding the supports of the differential operators $\cL_{i,j}$,
\begin{equation}\label{eq-gammaj}
\begin{array}{l}
\overline{\gamma}_j(\cP):=\min\{o_i-\deg(\cL_{i,j})\mid \cL_{i,j}\neq 0,\,i=1,\ldots ,n\},\\
\underline{\gamma}_j(\cP):=\min\{\ldeg(\cL_{i,j})\mid \cL_{i,j}\neq 0,\, i=1,\ldots ,n\},
\end{array}
\end{equation}
\[\gamma_j (\cP):=\underline{\gamma}_j(\cP)+\overline{\gamma}_j(\cP).\]
Given $j\in\{1,\ldots ,n-1\}$, observe that, for all $i$ such that $\cL_{i,j}\neq 0$ we have
\begin{equation}\label{eq-oigammaj}
\underline{\gamma}_j (\cP)\leq \ldeg(\cL_{i,j})\leq \deg (\cL_{i,j})\leq o_i-\overline{\gamma}_j (\cP).
\end{equation}
Therefore, for $\cL_{i,j}\neq 0$ the next set of lattice points contains $\frak{S}(\cL_{i,j})$ ,
\[I_{i,j}(\cP):=[\underline{\gamma}_j(\cP),o_i-\overline{\gamma}_j(\cP)]\cap\bbZ.\]
Finally, to explain the construction of Section \ref{sec-diff res formulas}, we will use the integer
\begin{equation}\label{eq-undgamma}
\gamma (\cP):=\sum_{j=1}^{n-1}\gamma_j (\cP).
\end{equation}

\begin{ex}\label{ex-gammas}
Let $\cP$ be as in Example \ref{ex-motivation}(1). We have
\[
\begin{array}{lll}
o_1=2, & \frak{S}(\cL_{1,1})=\{0,1\}, & \frak{S}(\cL_{1,2})=\{1,2\},\\
o_2=3, & \frak{S}(\cL_{2,1})=\emptyset, & \frak{S}(\cL_{2,2})=\{2,3\},\\
o_3=2, & \frak{S}(\cL_{3,1})=\{1\}, & \frak{S}(\cL_{3,2})=\{1,2\}.\\
\end{array}
\]
Thus
\[
\begin{array}{lll}
\underline{\gamma}_1(\cP)=0, & \overline{\gamma}_1(\cP)=1, & \gamma_1(\cP)=1,\\
\underline{\gamma}_2(\cP)=1, & \overline{\gamma}_2(\cP)=0, & \gamma_2 (\cP)=1,
\end{array}
\]
and $\gamma (\cP)=2$.
\end{ex}

\section{Differential resultant formulas}\label{sec-diff res formulas}

Let us consider a subset $\PS$ of $\partial\cP:=\{\partial^k f_i\mid i=1,\ldots ,n, k\in\bbN_0\}$ and a set of differential indeterminates $\cU\subset\{U\}$ verifying:
\begin{itemize}
\item[(\ps1)] $\PS=\{\partial^k f_i\mid k\in [0,L_i]\cap\bbZ,\, L_i\in\bbN_0,\,i=1,\ldots ,n\}$,
\item[(\ps2)] $\PS\subset\bbD [\cU]$ and $|\cU|=|\PS|-1$.
\end{itemize}
Let $N:=\sum_{i=1}^n o_i$.
\begin{rem}\label{rem-psuR}
Particular cases of sets $\PS$ and $\cU$ verifying $(\ps1)$ and $(\ps2)$ were given in \cite{CFproc} and \cite{R11} (see also \cite{RS}).
\begin{enumerate}
\item In \cite{CFproc}, $L_i=N-o_i$ and  $\cU=\{u_{j,k}\mid k\in [0,N]\cap\bbZ,\, j=1,\ldots ,n-1\}$.

\item In \cite{R11}, Section 3, $L_i=N-o_i-\hat{\gamma}$, where $\hat{\gamma}:=\sum_{j=1}^{n-1}\hat{\gamma}_j$,
\[\hat{\gamma}_j:=\min\{\overline{\gamma}_j(\cP),\min\{o_i\mid\cL_{i,j}=0,\,i=1,\ldots ,n\}\},\]
and $\cU=\{u_{j,k}\mid k\in [0,N-\hat{\gamma}_j-\hat{\gamma}]\cap\bbZ,\, j=1,\ldots ,n-1\}$.
\end{enumerate}
Observe that both choices coincide if $\hat{\gamma}=0$.
\end{rem}

The coefficient matrix $\cM(\PS,\cU)$ of the differential polynomials in $\PS$ as polynomials in $\bbD[\cU]$ is a $|\PS|\times
|\PS|$ matrix.
\begin{defi}
Given $\PS$ and $\cU$ verifying $(\ps1)$ and $(\ps2)$, we call
\[\det(\cM(\PS,\cU))\]
a {\sf differential resultant formula} for $\cP$.
\end{defi}

It can be proved as in \cite{RS}, Proposition 16(1) that $\det(\cM(\PS,\cU))\in [\cP]_{\bbD\{U\}}\cap\bbD$. Therefore, if $\det(\cM(\PS,\cU))\neq 0$ then it serves for differential elimination of the variables $U$ from $\cP$. See Examples \ref{ex-dfres}.

\para

The differential resultant formulas for $\cP$ given in \cite{CFproc} and \cite{R11} are determinants of matrices with zero columns in many cases.
Let $\PS^h:=\{\partial^k h_i\mid \partial^k f_i\in\PS\}$, the set containing the homogeneous part of the polynomials in $\PS$.
The coefficient matrix
\begin{equation}\label{eq-Lps}
\cL (\PS,\cU)
\end{equation}
of $\PS^h$, as a set of polynomials in $\bbD[\cU]$, is a submatrix of $\cM(\PS,\cU)$ of size $|\PS|\times (|\PS|-1)$. We assumed that $\cP$ is a nonhomogeneous system, thus if $\cM(\PS,\cU)$ has zero columns, those are columns of $\cL (\PS,\cU)$.

\begin{rem}\label{rem-psuR2}
The differential resultant formula for $\cP$ given in \cite{R11} is called the linear complete differential resultant of $\cP$ and denoted $\dcres(\cP)$. With $\PS$ and $\cU$ as in Remark \ref{rem-psuR}(2), $\dcres(\cP)=\det(\cM(\PS,\cU))$. Observe that, if $\underline{\gamma}_j(\cP)\neq 0$ for some $j\in \{1,\ldots ,n-1\}$, then the columns of $\cL(\PS,\cU)$ indexed by $u_j,\ldots ,u_{j,\underline{\gamma}_j(\cP)-1}$ are zero.
If $\overline{\gamma}_j(\cP)>\hat{\gamma}_j$ for some $j\in \{1,\ldots ,n-1\}$, then the columns of $\cL(\PS,\cU)$ indexed by $u_{j,N-\overline{\gamma}_j(\cP)-\hat{\gamma}+1}\ldots ,u_{j,N-\hat{\gamma}_j-\hat{\gamma}}$ are zero.
\end{rem}

If $N-o_i-\gamma (\cP)\geq 0$, $i=1,\ldots ,n$, the sets of lattice points
$\bbI_i:=[0,N-o_i-\gamma (\cP)]\cap\bbZ$ are non empty. We define the set of differential polynomials
\begin{equation}\label{eq-ps}
\ps (\cP):=\{\partial^k f_i\mid k\in\bbI_i,\, i=1,\ldots ,n\},
\end{equation}
containing
\begin{equation}\label{eq-L}
L:=\sum_{i=1}^n (N-o_i-\gamma (\cP)+1)
\end{equation}
differential polynomials, in the set $\cV$ of $L-1$ differential indeterminates
\begin{equation}\label{eq-V}
\cV:=\{u_{j,k}\mid k\in [\underline{\gamma}_j(\cP),N-\overline{\gamma}_j(\cP)-\gamma (\cP)]\cap\bbZ,\, j=1, \ldots , n-1\}.
\end{equation}
Let us assume that $\ps (\cP)=\{P_1,\ldots ,P_L\}$.
For $i=1,\ldots ,n$ and $k\in \bbI_i$,
\begin{align*}
P_{l(i,k)}&:=\partial^{k}f_i,\\
l(i,k)&:=\sum_{h=1}^{i-1}(N-o_h-\gamma (\cP)+1)+N-o_i-\gamma (\cP)+1-k\in\{1,\ldots ,L\}.
\end{align*}
The matrix  $\cM(\cP):=\cM(\ps(\cP),\cV)$ is an $L\times L$ matrix. We assume that the $l$th row of $\cM(\cP)$, $l=1,\ldots ,L$ contains the coefficients of $P_l$ as a polynomial in $\bbD[\cV]$, and that the coefficients are written in
decreasing order with respect to the orderly ranking on $U$.

Thus, if $N-o_i-\gamma (\cP)\geq 0$, $i=1,\ldots ,n$, we can define a linear differential resultant formula for $\cP$, denoted by $\dfres (\cP)$, and equal to:
\begin{align}\label{eq-mac}
\dfres (\cP):=\det(\cM(\cP)).
\end{align}

In general, we cannot guarantee that the columns of $\cM(\cP)$ are nonzero, as the next example shows.
\begin{ex}\label{ex-notsupess}
Let $\cP=\{f_1,f_2,f_3\}$, with $o_1=5$, $o_2=1$, and $o_3=1$. Let
$f_1= a_1+\cL_{1,1}(u_1)$,$f_2=a_2+\cL_{2,2}(u_2)$, $f_3=a_3+\cL_{3,2}(u_2)$,
with $\frak{S}(\cL_{1,1})=\{1,5\}$ and $\frak{S}(\cL_{2,2})=\frak{S}(\cL_{3,2})=\{0,1\}$. Then
$\gamma(\cP)=\underline{\gamma}_1(\cP)=1$ and $N-o_1-\gamma(\cP)=1$, $N-o_2-\gamma(\cP)=N-o_3-\gamma(\cP)=5$. Therefore $\cM (\cP)$ can be defined but columns indexed by $u_{1,3}$ and $u_{1,4}$ are zero.
\end{ex}


We give next, sufficient conditions on $\cP$ for $\cM (\cP)$ to have no zero columns.
Let $S_{n-1}$ be the permutation group of $\{1,\ldots ,n-1\}$. A linear differential system $\cP$ is called {\sf differentially essential} if, there exist $i\in\{1,\ldots ,n\}$ and $\tau_i\in S_{n-1}$ such that
\begin{equation}\label{eq-taui}
\left\{
\begin{array}{ll}
\cL_{j,\tau_i(n-j)}\neq 0, & j=1,\ldots ,i-1,\\
\cL_{j,\tau_i(n-j+1)}\neq 0, &j=i+1,\ldots ,n.
\end{array}
\right.
\end{equation}
Observe that, if $\cP$ is differentially essential then $\nu(\cP)=n-1$ but the converse is false.
Differentially essential systems of generic, non necessarily linear, differential polynomials were defined in \cite{LGY}, Definition 3.3 and  \eqref{eq-taui} is a new characterization of this requirement in the case of linear differential polynomials.

\begin{defi}\label{def-complete}
A linear differential system $\cP$ is called {\sf super essential} if, for every $i\in\{1,\ldots ,n\}$, there exists $\tau_i\in S_{n-1}$ verifying \eqref{eq-taui}.
\end{defi}

The notion in Definition \ref{def-complete} is introduced for the first time
and its implications will be studied further in Section \ref{sec-irredundant systems of dpls}. Simultaneously,
in \cite{LYG} the notion of rank essential (non necessarily linear) system was introduced, and it is equivalent to super essential in the linear case.

Given a super essential system $\cP$, it will be
proved that $\dfres(\cP)$ can be defined and that the matrix $\cM(\cP)$ has no zero columns.
For this purpose, given $i\in\{1,\ldots ,n\}$, for every $\tau\in S_{n-1}$ we define bijections $\mu_{\tau}^i:\{1,\ldots ,n\}\backslash\{i\}\longrightarrow \{1,\ldots ,n-1\}$ by
\begin{equation}\label{eq-mutau}
\mu_{\tau}^i(j):=\left\{\begin{array}{ll}\tau(n-j),&j=1,\ldots ,i-1,\\\tau(n-j+1),&j=i+1,\ldots ,n.\end{array}\right.
\end{equation}
In particular, for $\tau_i$, $i=1,\ldots ,n$ as in Definition \ref{def-complete},
let
\begin{equation}\label{eq-mui}
\mu_i:=\mu_{\tau_i}^i, i=1,\ldots ,n.
\end{equation}

\begin{lem}\label{lem-exdFres}
Given a super essential system $\cP$, $N-o_i-\gamma(\cP)\geq 0$, $i=1,\ldots ,n$.
\end{lem}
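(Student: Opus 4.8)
The goal is to show $N-o_i-\gamma(\cP)\geq 0$ for every $i$, i.e. $\sum_{h=1}^n o_h - o_i \geq \gamma(\cP) = \sum_{j=1}^{n-1}\gamma_j(\cP)$, for each fixed $i$. The natural strategy is to use the super essential hypothesis: for the given $i$ there is a permutation $\tau_i\in S_{n-1}$ and the associated bijection $\mu_i=\mu_{\tau_i}^i:\{1,\dots,n\}\setminus\{i\}\to\{1,\dots,n-1\}$ with $\cL_{j,\mu_i(j)}\neq 0$ for all $j\neq i$. Since $\mu_i$ is a bijection, we can reindex the sum over parameters:
$$\gamma(\cP)=\sum_{j=1}^{n-1}\gamma_j(\cP)=\sum_{j\in\{1,\dots,n\}\setminus\{i\}}\gamma_{\mu_i(j)}(\cP).$$
So it suffices to show $\sum_{j\neq i} o_j \geq \sum_{j\neq i}\gamma_{\mu_i(j)}(\cP)$, and in fact I would prove the stronger termwise inequality $o_j \geq \gamma_{\mu_i(j)}(\cP)$ for each $j\neq i$.

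The termwise bound follows from the definitions in~\eqref{eq-gammaj}. Fix $j\neq i$ and set $k:=\mu_i(j)$. Because $\cL_{j,k}\neq 0$, the index $j$ is an admissible choice in both minima defining $\underline{\gamma}_k(\cP)$ and $\overline{\gamma}_k(\cP)$, so
$$\underline{\gamma}_k(\cP)\leq\ldeg(\cL_{j,k}),\qquad \overline{\gamma}_k(\cP)\leq o_j-\deg(\cL_{j,k}).$$
Adding these and using $\ldeg(\cL_{j,k})\leq\deg(\cL_{j,k})$ gives
$$\gamma_k(\cP)=\underline{\gamma}_k(\cP)+\overline{\gamma}_k(\cP)\leq \ldeg(\cL_{j,k})+o_j-\deg(\cL_{j,k})\leq o_j,$$
which is exactly $\gamma_{\mu_i(j)}(\cP)\leq o_j$. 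This is essentially inequality~\eqref{eq-oigammaj} read off for the specific pair $(j,k)$. Summing over all $j\in\{1,\dots,n\}\setminus\{i\}$ and combining with the reindexing identity yields $\gamma(\cP)\leq\sum_{j\neq i}o_j = N-o_i$, i.e. $N-o_i-\gamma(\cP)\geq 0$.

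The only real subtlety is the reindexing step: one must check that $\{\mu_i(j)\mid j\in\{1,\dots,n\}\setminus\{i\}\}$ is exactly $\{1,\dots,n-1\}$, which is immediate from~\eqref{eq-mutau} since $\mu_i$ is defined as a bijection (it is $\tau_i$ precomposed with the order-reversing shifts $j\mapsto n-j$ on $\{1,\dots,i-1\}$ and $j\mapsto n-j+1$ on $\{i+1,\dots,n\}$, whose images partition $\{1,\dots,n-1\}$). So there is no genuine obstacle here; the lemma is a direct bookkeeping consequence of super essentiality together with the support bound~\eqref{eq-oigammaj}. I would simply present the reindexing, quote~\eqref{eq-oigammaj} (or rederive the one-line estimate above), and sum.
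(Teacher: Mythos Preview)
Your proof is correct and follows essentially the same approach as the paper: reindex $\gamma(\cP)=\sum_{j\neq i}\gamma_{\mu_i(j)}(\cP)$ via the bijection $\mu_i$, then use super essentiality ($\cL_{j,\mu_i(j)}\neq 0$) together with~\eqref{eq-oigammaj} to obtain the termwise bound $o_j\geq\gamma_{\mu_i(j)}(\cP)$ and sum. The paper's version is terser, simply writing $N-o_i-\gamma(\cP)=\sum_{j\neq i}(o_j-\gamma_{\mu_i(j)}(\cP))$ and invoking~\eqref{eq-oigammaj} directly, whereas you additionally rederive that inequality from the definitions in~\eqref{eq-gammaj}.
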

\begin{proof}
Given $i\in\{1,\ldots ,n\}$,
\[N-o_i-\gamma(\cP)=\sum_{j\in\{1,\ldots ,n\}\backslash\{i\}}(o_j-\gamma_{\mu_i(j)}(\cP)).\]
By Definition \ref{def-complete} and \eqref{eq-mui}, $\cL_{j,\mu_i(j)}\neq 0$, $j\in\{1,\ldots ,n\}\backslash\{i\}$ and,
by \eqref{eq-oigammaj}, $o_j-\gamma_{\mu_i(j)}(\cP)\geq 0$. This proves the result.
\end{proof}

By Lemma \ref{lem-exdFres}, if $\cP$ is super essential then the differential resultant formula $\dfres(\cP)$ can be defined. Furthermore, we prove next that $\cM(\cP)$ has no zero columns although we cannot guarantee that $\dfres(\cP)\neq 0$ as Examples \ref{ex-dfres}(3) and \ref{ex-fin} show.

Given a linear differential polynomial $f\in\bbD\{U\}$,
\[f=a+\sum_{j=1}^{n-1}\cD_j(u_j),\mbox{ with }a\in\bbD\mbox{ and }\cD_j\in\bbD[\partial],\]
we define
$\frak{S}_j (f):=\frak{S}(\cD_j)$.
\begin{rem}\label{rem-k+1}
Let $f\in\bbD\{U\}$ be linear. If $k\in\frak{S}_j(f)$ but $k+1\notin\frak{S}_j(f)$ then $k+1\in \frak{S}_j(\partial f)$.
\end{rem}

\begin{thm}\label{thm-Mn0}
Given a super essential system $\cP$ (as in Section \ref{sec-Preliminary}), the matrix $\cM(\cP)$ has no zero columns.
\end{thm}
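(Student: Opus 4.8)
The plan is to show that every column of $\cM(\cP)$ is indexed by some derivative $u_{j,k}$ with $k$ in the interval $[\underline{\gamma}_j(\cP),\,N-\overline{\gamma}_j(\cP)-\gamma(\cP)]\cap\bbZ$ (this is just $\cV$ from \eqref{eq-V}), and to produce, for each such column, a polynomial $P_l\in\ps(\cP)$ whose coefficient of $u_{j,k}$ is nonzero. Equivalently, I must find $i\in\{1,\ldots ,n\}$ and $m\in\bbI_i$ such that $k\in\frak{S}_j(\partial^m f_i)=\frak{S}(\partial^m\cL_{i,j})=m+\frak{S}(\cL_{i,j})$. Since $\cP$ is super essential, fix any such $j$; we need an index $i$ with $\cL_{i,j}\neq 0$ and with enough room in $\bbI_i=[0,N-o_i-\gamma(\cP)]\cap\bbZ$ to shift the support of $\cL_{i,j}$ so that it covers $k$.

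First I would reduce to covering a single index $j$ at a time. Order the indices $i$ with $\cL_{i,j}\neq 0$ and recall from \eqref{eq-oigammaj} that for such $i$ we have $\underline{\gamma}_j(\cP)\le\ldeg(\cL_{i,j})\le\deg(\cL_{i,j})\le o_i-\overline{\gamma}_j(\cP)$. Thus the shifted supports $\{m+\frak{S}(\cL_{i,j}) : m\in\bbI_i\}$ range over integers from $\ldeg(\cL_{i,j})$ (when $m=0$) up to $\deg(\cL_{i,j})+(N-o_i-\gamma(\cP))\le N-\overline{\gamma}_j(\cP)-\gamma(\cP)$ (when $m$ is maximal); the union of these sets over all valid $i$ should be exactly $\cV$'s $j$-slice. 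The endpoints are handled by picking the $i$ achieving the minimum in $\underline{\gamma}_j(\cP)$ (giving the left end) and the $i$ achieving the minimum in $\overline{\gamma}_j(\cP)$ (giving the right end). The interior is where Remark \ref{rem-k+1} enters: if some $k$ in the range is not covered because it falls in a gap of $\frak{S}(\cL_{i,j})$, then applying a further derivative $\partial$ pushes the support forward so that $k$ becomes covered, provided $m+1$ is still in $\bbI_i$; one iterates this, and the point is that no gap can be so wide that we run out of room in $\bbI_i$ before reaching $k$, because the total length $N-o_i-\gamma(\cP)$ of $\bbI_i$ together with $\deg(\cL_{i,j})$ reaches at least to $N-\overline{\gamma}_j(\cP)-\gamma(\cP)$.

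The main obstacle I anticipate is the ``interior gap'' argument: showing that every integer $k$ in $[\underline{\gamma}_j(\cP),N-\overline{\gamma}_j(\cP)-\gamma(\cP)]$ is hit, not merely the endpoints. The clean way is a covering/induction argument on $k$: start at $k=\underline{\gamma}_j(\cP)$, which lies in $\frak{S}(\cL_{i_0,j})$ for the minimizing index $i_0$; given that $k$ is covered by $m+\frak{S}(\cL_{i,j})$ for some admissible pair, Remark \ref{rem-k+1} shows $k+1\in\frak{S}(\partial(\partial^m f_i))$ unless $k+1$ is already in $m+\frak{S}(\cL_{i,j})$ — in either case $k+1$ is covered, as long as $m+1\le N-o_i-\gamma(\cP)$; and the inequality $\deg(\cL_{i,j})+(N-o_i-\gamma(\cP))\ge N-\overline{\gamma}_j(\cP)-\gamma(\cP)$ guarantees that we never need to step past the top of $\bbI_i$ before $k$ exhausts the target interval. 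One must be slightly careful that, at the step where $\bbI_i$ is exhausted, $k$ has already reached $N-\overline{\gamma}_j(\cP)-\gamma(\cP)$ (using the choice of $i$ with $\deg(\cL_{i,j})=o_i-\overline{\gamma}_j(\cP)$), so no column is missed. Assembling these per-$j$ arguments over $j=1,\ldots,n-1$ then covers all of $\cV$ and completes the proof that $\cM(\cP)$ has no zero columns.
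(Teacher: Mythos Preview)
Your outline has the right shape (per-$j$ covering of $[\underline{\gamma}_j,\,N-\overline{\gamma}_j-\gamma]$ using Remark~\ref{rem-k+1}), but there is a genuine gap. The inequality you invoke, $\deg(\cL_{i,j})+(N-o_i-\gamma)\ge N-\overline{\gamma}_j-\gamma$, goes the wrong way: by \eqref{eq-oigammaj} one has $\deg(\cL_{i,j})\le o_i-\overline{\gamma}_j$, so in fact $\deg(\cL_{i,j})+(N-o_i-\gamma)\le N-\overline{\gamma}_j-\gamma$, with equality only for the index $\overline{I}(j)$ realizing $\overline{\gamma}_j$. Consequently your single-index induction starting at $i_0$ (the index realizing $\underline{\gamma}_j$) can exhaust $\bbI_{i_0}$ before $k$ reaches the top of the target interval. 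Example~\ref{ex-notsupess} makes this concrete: there $j=1$, only $i=1$ contributes, $\frak{S}(\cL_{1,1})=\{1,5\}$, $\bbI_1=\{0,1\}$, and your stepping procedure stalls at $k=3$ (needing $m=2\notin\bbI_1$), which is exactly why the columns $u_{1,3},u_{1,4}$ are zero in that example.

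The deeper issue is that your argument never uses the super essential hypothesis beyond ``every $j$ has some $i$ with $\cL_{i,j}\neq 0$''; as written it would apply to any system with $\nu(\cP)=n-1$, contradicting Example~\ref{ex-notsupess}. The paper's proof organizes the target interval differently: it sets $d=\min_i\deg(\cL_{i,j})$ (attained at $I(j)$) and splits $[\underline{\gamma}_j,\,N-\overline{\gamma}_j-\gamma]$ into three pieces. The middle and right pieces are covered by derivatives of $f_{I(j)}$ and of $f_{\overline{I}(j)}$ respectively, using only the top degree. The delicate piece is the left one, $[\underline{\gamma}_j,d-1]$: for $k$ there not lying in any $\frak{S}_j(f_i)$, one locates the largest $k'<k$ that does lie in some $\frak{S}_j(f_{i_{k'}})$ and needs $k-k'\le N-o_{i_{k'}}-\gamma$ so that $\partial^{k-k'}f_{i_{k'}}\in\ps(\cP)$. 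The bound $k-k'\le d-\underline{\gamma}_j$ is immediate, but converting this into $k-k'\le N-o_{i_{k'}}-\gamma$ when $i_{k'}=I(j)$ requires a \emph{second} index $I'\neq I(j)$ with $\cL_{I',j}\neq 0$, supplied precisely by the bijection $\mu_{I(j)}$ from the super essential hypothesis (see \eqref{eq-kkp2}). Your sketch is missing this mechanism: you need to explain how to hand off between indices and where super essentiality enters to control the gap widths.
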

\begin{proof}
Equivalently, we will prove that $\ps(\cP)$ is a system of $L$ polynomials in the $L-1$ (algebraic) indeterminates of the set $\cV$ in \eqref{eq-V}.
We will prove that for every $j\in\{1,\ldots ,n-1\}$
\[\cup_{f\in\ps(\cP)} \frak{S}_j (f)=[\underline{\gamma}_j(\cP), N-\overline{\gamma}_j(\cP)-\gamma(\cP)]\cap\bbZ.\]
Let us denote $\gamma_j(\cP)$, $\overline{\gamma}_j(\cP)$ and $\underline{\gamma}_j(\cP)$, $j=1,\ldots ,n-1$ simply by $\gamma_j$, $\overline{\gamma}_j$ and $\underline{\gamma}_j$ respectively, in this proof.

Given $j\in\{1,\ldots ,n-1\}$, the set $\{\deg(\cL_{i,j})\mid \cL_{i,j}\neq 0, i=1,\ldots,n\}$ is not empty because $\nu(\cP)=n-1$. Let $I(j)\in \{1,\ldots ,n\}$ be such that
\begin{equation}\label{eq-D}
\deg(\cL_{I(j),j})=\min \{\deg(\cL_{i,j})\mid \cL_{i,j}\neq 0, i=1,\ldots,n\}
\end{equation}
and define $d:=\deg(\cL_{I(j),j})$.
Let $\overline{I}(j)\in \{1,\ldots ,n\}$ be such that
\[\overline{\gamma}_j=o_{\overline{I}(j)}-\deg(\cL_{\overline{I}(j),j}).\]
Observe that it may happen that $I(j)=\overline{I}(j)$ but not necessarily.
We can write
\[[\underline{\gamma}_j, N-\overline{\gamma}_j-\gamma]=[\underline{\gamma}_j,d-1]\cup [d,N-o_{I(j)}-\gamma+d ] \cup [N-o_{I(j)}-\gamma+d+1,N-\overline{\gamma}_j-\gamma].\]
If $\underline{\gamma}_j=d$ then the first interval is empty. If $I(j)=\overline{I}(j)$ then $d=o_{I(j)}-\overline{\gamma}_j$ and the third interval is empty.
\begin{enumerate}
\item For every $k\in [d,N-o_{I(j)}-\gamma+d ]\cap \bbZ$, since $d=\deg(\cL_{I(j),j})$, by Remark \ref{rem-k+1} we have $k\in\frak{S}_j (\partial^{k-d}f_{I(j)})$.

\item If $I(j)\neq \overline{I}(j)$. Given $k\in [N-o_{I(j)}-\gamma+d+1,N-\overline{\gamma}_j-\gamma]$. Let $\overline{d}:=\deg(\cL_{\overline{I}(j),j})=o_{\overline{I}(j)}-\overline{\gamma}_j$ and observe that
\[\overline{d} =o_{\overline{I}(j)}-\gamma_j+\underline{\gamma}_j\leq N-o_{I(j)}-\gamma+\underline{\gamma}_j\leq N-o_{I(j)}-\gamma+d <k \]
since $\underline{\gamma}_j\leq d$. The previous shows that
\[k-\overline{d}\leq N-\overline{\gamma}_j-\gamma -\overline{d}=N-o_{\overline{I}(j)}-\gamma.\]
By Remark \ref{rem-k+1}, $k\in\frak{S}_j(\partial^{k-\overline{d}} f_{\overline{I}(j)})$.

\item If $d\neq \underline{\gamma}_j$. Observe that $[\underline{\gamma}_j,d-1]\cap (\cup_{i=1}^n\frak{S}_j(f_i))\neq\emptyset$ because it contains $\underline{\gamma}_j$. For $k\in [\underline{\gamma}_j,d-1]\cap (\cup_{i=1}^n\frak{S}_j(f_i))$, there exists $i_k\in\{1,\ldots ,n\}$ such that $k\in\frak{S}_j(f_{i_k})$. If $[\underline{\gamma}_j,d-1]\backslash (\cup_{i=1}^n\frak{S}_j(f_i))\neq\emptyset$, given $k\in [\underline{\gamma}_j,d-1]\backslash (\cup_{i=1}^n\frak{S}_j(f_i))$, let
\[k'=\max [\underline{\gamma}_j,k-1]\cap (\cup_{i=1}^n\frak{S}_j(f_i)).\]
Observe that $k-k'\leq d-\underline{\gamma}_j$ and there exists $i_{k'}\in\{1,\ldots ,n\}$ such that $k'\in \frak{S}_j (f_{i_{k'}})$ but $k'+1\notin \frak{S}_j (f_{i_{k'}})$.
If $i_{k'}\neq I(j)$ then
\begin{equation}\label{eq-kkp1}
k-k'\leq o_{I(j)}-\gamma_j\leq N-o_{i_{k'}}-\gamma.
\end{equation}
Otherwise, $i_{k'}=I(j)$ and $k'\in \frak{S}_j(f_{I(j)})$. Since $\cP$ is super essential, the bijection $\mu_{I(j)}$ given by \eqref{eq-mui} is defined and for $I'=\mu_{I(j)}^{-1}(j)\in\{1,\ldots ,n\}\backslash \{I(j)\}$, $\cL_{I',j}\neq 0$. In particular, $\deg(\cL_{I',j})\leq o_{I'}-\overline{\gamma}_j$ and
\begin{equation}\label{eq-kkp2}
k-k'\leq d-\underline{\gamma}_j\leq o_{I'}-\gamma_j\leq N-o_{I(j)}-\gamma.
\end{equation}
By \eqref{eq-kkp1}, \eqref{eq-kkp2} and Remark \ref{rem-k+1} it is proved that $k\in \frak{S}_j (\partial^{k-k'} f_{i_{k'}})$.
\end{enumerate}
\end{proof}

\begin{ex}
\begin{enumerate}
\item Let us have a new look at the system of Example \ref{ex-notsupess},
\begin{equation}
\begin{array}{cccc}
f_1=a_1+& \cL_{1,1}(u_1) & + & 0\,\, ,\\
f_2=a_2+& 0 & + & \cL_{2,2}(u_2),\\
f_3=a_3+& 0 & + & \cL_{3,2}(u_2).
\end{array}
\end{equation}
It is differentially essential, namely for $i=3$ the permutation $\tau_3=(2\,\,1)\in S_2$ verifies \eqref{eq-taui}, $\cL_{1,\tau_3(2)}=\cL_{1,1}\neq 0$ and $\cL_{2,\tau_3(1)}=\cL_{2,2}\neq 0$.
This system is not super essential, for $i=1$ we cannot find $\tau_1\in S_2$ verifying \eqref{eq-taui}. The subsystem $\cP'=\{f_2,f_3\}$ in $\bbD\{u_2\}$ is super essential and $\dfres(\cP')$ is the determinant of a $4\times 4$ matrix while $\dfres(\cP)$ is the determinant of a matrix $14\times 14$.

\item The system in Example \ref{ex-motivation}(1) is super essential, let us construct $\cM(\cP)$.
Using the information in Example \ref{ex-gammas}, $L=\sum_{i=1}^3 (N-o_i-\gamma (\cP)+1)=11$ and
\begin{align*}
&\ps(\cP)=\{\partial^3 f_1,\partial^2 f_1 ,\partial f_1,f_1, \partial^2 f_2,\partial f_2,f_2,\partial^3 f_3,\partial^2 f_3 ,\partial f_3,f_3 \},\\
&\cV=\{u_{2,5},u_{2,4},u_{1,4},u_{2,3},u_{1,3},u_{2,2},u_{1,2},u_{2,1},u_{1,1},u_1\},
\end{align*}
whose elements are arranged in the order indexing the $L$ rows and first $L-1$ columns of $\cM(\cP)$ respectively. We show next the columns of the matrix $\cM(\cP)$, we denote $\partial^l a_{i,j,k}$ by $a_{i,j,k}^{(l)}$ and $\partial^l a_i$ by $a_i^{(l)}$, $l\in\bbN$, due to space limitations.
Observe that the first $L-1$ columns are the columns of the matrix $\cL(\ps(\cP),\cV)$ in \eqref{eq-Lps}.
\[
\begin{array}{rccccc}
 &u_{2,5}&u_{2,4}&u_{1,4}&u_{2,3}&u_{1,3}\\
 & & & & & \\
\partial^3 f_1\rightarrow &a_{1,2,2} &  a_{1,2,1}+3a_{1,2,2}^{(1)} & a_{1,1,1}& 3a_{1,2,1}^{(1)}+3a_{1,2,2}^{(2)} & a_{1,1,0}+3a_{1,1,1}^{(1)}
\\
\partial^2 f_1 \rightarrow & 0 &  a_{1,2,2} & 0 & a_{1,2,1}+2a_{1,2,2}^{(1)}& a_{1,1,1}
\\
\partial f_1\rightarrow & 0& 0& 0& a_{1,2,2}& 0
\\
f_1\rightarrow & 0& 0& 0& 0& 0
\\
\partial^2 f_2 \rightarrow & a_{2,2,3}& a_{2,2,2}+2a_{2,2,3}^{(1)}& 0& 2a_{2,2,2}^{(1)}+a_{2,2,3}^{(2)}& 0
\\
\partial^1 f_2 \rightarrow & 0 &  a_{2,2,3} & 0& a_{2,2,2}+a_{2,2,3}^{(1)}& 0
\\
f_2 \rightarrow & 0&0& 0& a_{2,2,3}& 0
\\
\partial^3 f_3 \rightarrow & a_{3,2,2}&a_{3,2,1}+3a_{3,2,2}^{(1)}& a_{3,1,1}^{(1)}& 3a_{3,2,1}+3a_{3,2,2}^{(2)}& 3a_{3,1,1}^{(1)}
\\
\partial^2 f_3 \rightarrow & 0& a_{3,2,2} & 0& a_{3,2,1}+2a_{3,2,2}^{(1)}& a_{3,1,1}
\\
\partial f_3 \rightarrow & 0& 0& 0& a_{3,2,2}& 0
\\
f_3 \rightarrow & 0& 0& 0& 0& 0
\end{array}
\]
\[
\begin{array}{cccccc}
u_{2,2}& u_{1,2}&u_{2,1}&u_{1,1}&u_1&1\\
 & & & & \\
3a_{1,2,1}^{(2)}+a_{1,2,2}^{(3)}  & 3a_{1,1,0}^{(1)}+3a_{1,1,1}^{(2)} & a_{1,2,1}^{(3)} &  3a_{1,1,0}^{(2)}+a_{1,1,1}^{(3)} &
a_{1,1,0}^{(3)} &a_1^{(3)}
\\
2a_{1,2,1}^{(1)}+a_{1,2,2}^{(2)} & a_{1,1,0}+2a_{1,1,1}^{(1)}& a_{1,2,1}^{(2)} & 2a_{1,1,0}^{(1)}+a_{1,1,1}^{(2)}& a_{1,1,0}^{(2)} & a_1^{(2)}
\\
a_{1,2,1}+a_{1,2,2}^{(1)} & a_{1,1,1}& a_{1,2,1}^{(1)}& a_{1,1,0}+a_{1,1,1}^{(1)}& a_{1,1,0}^{(1)} & a_1^{(1)}
\\
a_{1,2,2} & 0& a_{1,2,1}& a_{1,1,1}& a_{1,1,0} & a_1
\\
a_{2,2,2}^{(2)} & 0& 0& 0&0 &a_2^{(2)}
\\
a_{2,2,2}^{(1)} & 0& 0& 0& 0  &a_2^{(1)}
\\
a_{2,2,2} & 0& 0& 0& 0  &a_2
\\
3a_{3,2,1}^{(2)}+a_{3,2,2}^{(3)} & 3a_{3,1,1}^{(2)}& a_{3,2,1}^{(3)}& a_{3,1,1}^{(3)}& 0 &a_3^{(3)}
\\
2a_{3,2,1}^{(1)}+a_{3,2,2}^{(2)} & 2a_{3,1,1}^{(1)}& a_{3,2,1}^{(2)}& a_{3,1,1}^{(2)} & 0  & a_3^{(2)}
\\
a_{3,2,1}+a_{3,2,2}^{(1)} & a_{3,1,1}& a_{3,2,1}^{(1)}& a_{3,1,1}^{(1)}& 0  & a_3^{(1)}
\\
a_{3,2,2} & 0& a_{3,2,1}& a_{3,1,1}& 0 & a_3
\end{array}
\]
\end{enumerate}
\end{ex}

\section{Irredundant systems of linear differential polynomials}\label{sec-irredundant systems of dpls}

A key fact to eliminate the differential variables in $U$ from the system $\cP$ is that not all the polynomials in $\cP$ have to be involved in the computation. Namely only the polynomials in a super essential subsystem $\cP^*$ of $\cP$ are needed to achieve the elimination. In this section, it is proved that every system $\cP$ contains a super essential subsystem $\cP^*$ and
a new characterization of differentially essential systems is given, namely they are the system having a unique super essential subsystem.

The linear differential system $\cP$ is an overdetermined system, in the differential variables $U$.
Recall that we assumed $\nu(\cP)=n-1=|\cP|-1$.
It is proved in this section that, the super essential condition on $\cP$ is equivalent with
every proper subsystem $\cP'$ of $\cP$ not being overdetermined, in the differential variables $U$.
A name for this idea seems to be lacking in the literature.
\begin{defi}
A system of linear differential polynomials $\cP$ is called {\sf irredundant} (for differential elimination purposes),
if every proper subsystem $\cP'$ of $\cP$ verifies
$|\cP'|\leq \nu(\cP')$. Otherwise, $\cP$ is called {\sf redundant}.
\end{defi}

Furthermore, it will be shown in this section that every linear differential system $\cP$ (even if it is not differentially essential) contains a super essential subsystem $\cP^*$. Let $\cP_i:=\cP\backslash \{f_i\}$.

\begin{prop}\label{prop-irse}
If $\cP$ is super essential then $\cP$ is irredundant.
\end{prop}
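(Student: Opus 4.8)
The plan is to unwind the definition of \emph{irredundant} and reduce it, via the combinatorial content of super essentiality, to a one-line matching argument. Let $\cP'$ be an arbitrary proper subsystem of $\cP$, say $\cP'=\{f_j\mid j\in T\}$ with $T\subsetneq\{1,\ldots,n\}$; I must show $|\cP'|=|T|\le\nu(\cP')$. The key point is that, since $T$ is a proper subset, I can choose an index $i\in\{1,\ldots,n\}\setminus T$, and then the super essential hypothesis \emph{at this particular $i$} applies to the system $\cP_i=\cP\setminus\{f_i\}$, which contains $\cP'$.

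Next I would translate condition \eqref{eq-taui} for the chosen $i$ into the language of the bijections $\mu_\tau^i$ of \eqref{eq-mutau}: having $\tau_i\in S_{n-1}$ satisfying \eqref{eq-taui} is precisely the statement that the bijection $\mu_i:=\mu_{\tau_i}^i:\{1,\ldots,n\}\setminus\{i\}\to\{1,\ldots,n-1\}$ of \eqref{eq-mui} has the matching property $\cL_{j,\mu_i(j)}\neq 0$ for every $j\in\{1,\ldots,n\}\setminus\{i\}$. I would note in passing that \eqref{eq-mutau} does define a bijection, since the values $n-j$ for $j=1,\ldots,i-1$ together with $n-j+1$ for $j=i+1,\ldots,n$ enumerate $\{1,\ldots,n-1\}$ exactly once.

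Then I would restrict $\mu_i$ to $T$. As $T\subseteq\{1,\ldots,n\}\setminus\{i\}$ and $\mu_i$ is injective, the assignment $j\mapsto\mu_i(j)$ is an injection $T\hookrightarrow\{1,\ldots,n-1\}$, and by the matching property each value $\mu_i(j)$ is a parameter of $\cP'$, i.e.\ an index counted by $\nu(\cP')$ in \eqref{eq-nu}. Hence $\nu(\cP')\ge|\{\mu_i(j)\mid j\in T\}|=|T|=|\cP'|$, which is exactly what irredundance requires. Since $\cP'$ was an arbitrary proper subsystem, $\cP$ is irredundant.

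I do not expect any real obstacle here; the argument is essentially immediate once set up correctly. The only points demanding care are bookkeeping ones: invoking the super essential condition at an index $i$ lying \emph{outside} $T$ (so that $\cP'$ sits inside $\cP_i$), and checking that passing from \eqref{eq-taui} to the restricted injective map on $T$ preserves the nonvanishing of the relevant operators $\cL_{j,\mu_i(j)}$. One could phrase the same reasoning through Hall's/K\"onig's theorem, but only the trivial direction is needed, namely that a perfect matching bounds neighbourhoods from below; the converse inclusion (which would realize super essentiality from the non-overdetermined condition on all proper subsystems) is what genuinely uses the marriage theorem, and is not required for this proposition.
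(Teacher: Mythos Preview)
Your proof is correct and follows essentially the same route as the paper's: pick $i\notin T$, use the bijection $\mu_i$ from \eqref{eq-mui} guaranteed by super essentiality, and note that its restriction to $T$ injects into the set of parameters of $\cP'$. The paper's version is terser, omitting your remarks on why $\mu_\tau^i$ is a bijection and the aside about Hall's theorem, but the core argument is identical.
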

\begin{proof}
For every proper subset $\cP'=\{f_{h_1},\ldots ,f_{h_m}\}$ of $\cP$, there exists $i\in\{1,\ldots ,n\}$ such that $\cP'\subseteq\cP_i$. Therefore $h_1,\ldots,h_m\in\{1,\ldots ,n\}\backslash\{i\}$ and given $\mu_i$ as in \eqref{eq-mui},
\[\cL_{h_t,\mu_i(h_t)}\neq 0,\, t=1,\ldots ,m.\]
Since $\mu_i$ is a bijection, $\nu(\cP')\geq m=|\cP'|$.
\end{proof}

Let $x_{i,j}$, $i=1,\ldots ,n$, $j=1,\ldots ,n-1$ be algebraic indeterminates over $\bbQ$, the field of rational numbers.
Let $X(\cP)=(X_{i,j})$ be the $n\times (n-1)$ matrix, such that
\begin{equation}\label{eq-XP}
X_{i,j}:=\left\{\begin{array}{ll}x_{i,j},&\cL_{i,j}\neq 0,\\ 0,&\cL_{i,j}=0.\end{array}\right.
\end{equation}
We denote by $X_i(\cP)$, $i=1,\ldots ,n$, the submatrix of $X(\cP)$ obtained by removing its $i$th row.
Thus $X(\cP)$ is an $n\times (n-1)$ matrix with entries in the field $\bbK:=\bbQ(X_{i,j}\mid X_{i,j}\neq 0)$.

\begin{lem}\label{lem-Xi}
Given $i\in\{1,\ldots ,n\}$, $\det(X_i(\cP))\neq 0$ if and only if there exists $\tau_i\in S_{n-1}$ verifying \eqref{eq-taui}.
\end{lem}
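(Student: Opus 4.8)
The statement is essentially a combinatorial reformulation of when a determinant with a prescribed zero-pattern is nonzero. The plan is to expand $\det(X_i(\cP))$ by the Leibniz formula and match the nonvanishing monomials with the permutations $\tau_i$ satisfying \eqref{eq-taui}. First I would fix $i$ and recall that $X_i(\cP)$ is the $(n-1)\times(n-1)$ matrix whose rows are indexed by $\{1,\dots,n\}\backslash\{i\}$ and whose columns are indexed by $\{1,\dots,n-1\}$, with entry $x_{j,k}$ in position $(j,k)$ when $\cL_{j,k}\neq 0$ and $0$ otherwise. Since the nonzero entries are algebraically independent indeterminates over $\bbQ$, the Leibniz expansion
\[
\det(X_i(\cP))=\sum_{\sigma}\,\pm\prod_{j\in\{1,\ldots,n\}\backslash\{i\}} X_{j,\sigma(j)}
\]
(the sum over bijections $\sigma\colon\{1,\ldots,n\}\backslash\{i\}\to\{1,\ldots,n-1\}$) has no cancellation among distinct monomials; hence $\det(X_i(\cP))\neq 0$ if and only if at least one summand is a nonzero monomial, i.e. if and only if there exists a bijection $\sigma$ with $X_{j,\sigma(j)}\neq 0$, equivalently $\cL_{j,\sigma(j)}\neq 0$, for every $j\neq i$.

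The remaining work is purely bookkeeping: translate an arbitrary bijection $\sigma\colon\{1,\ldots,n\}\backslash\{i\}\to\{1,\ldots,n-1\}$ into a permutation $\tau\in S_{n-1}$ via the bijections $\mu^i_\tau$ of \eqref{eq-mutau}. Concretely, given $\sigma$, I would define $\tau\in S_{n-1}$ by $\tau(n-j):=\sigma(j)$ for $j=1,\dots,i-1$ and $\tau(n-j+1):=\sigma(j)$ for $j=i+1,\dots,n$; one checks that as $j$ ranges over $\{1,\dots,n\}\backslash\{i\}$ the arguments $n-j$ (for $j<i$) and $n-j+1$ (for $j>i$) range bijectively over $\{1,\dots,n-1\}$, so $\tau$ is a well-defined permutation and $\mu^i_\tau=\sigma$. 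Conversely, any $\tau\in S_{n-1}$ yields $\sigma=\mu^i_\tau$. Under this correspondence the condition ``$\cL_{j,\sigma(j)}\neq 0$ for all $j\neq i$'' is exactly \eqref{eq-taui}. Combining this with the determinant criterion of the previous paragraph gives the equivalence.

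I do not expect a serious obstacle here; this is a short argument. The only point requiring a little care is the index juggling showing that the two half-ranges $\{n-j : 1\le j\le i-1\}=\{n-i+1,\dots,n-1\}$ and $\{n-j+1 : i+1\le j\le n\}=\{1,\dots,n-i\}$ are disjoint and together exhaust $\{1,\dots,n-1\}$, so that $\mu^i_\tau$ really is a bijection and the passage $\sigma\leftrightarrow\tau$ is well-defined in both directions; this is precisely the content already recorded after \eqref{eq-mutau}, so it can be invoked rather than reproved. One should also note explicitly that the sign $\pm$ attached to a Leibniz term is irrelevant, since a single nonzero monomial cannot be cancelled by any other term and the indeterminates are not specialized.
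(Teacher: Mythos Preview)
Your proposal is correct and follows essentially the same route as the paper: expand $\det(X_i(\cP))$ via the Leibniz formula, use that the nonzero entries are independent indeterminates so no cancellation occurs, and identify the bijections $\sigma$ with permutations $\tau\in S_{n-1}$ through $\mu^i_\tau$. The paper simply parametrizes the Leibniz sum directly by $\tau\in S_{n-1}$ using \eqref{eq-mutau} rather than first summing over bijections $\sigma$ and then translating, but the argument is the same.
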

\begin{proof}
Given $\tau\in S_{n-1}$, let us consider the bijection $\mu_{\tau}:=\mu_{\tau}^i$ as in \eqref{eq-mutau}.
We can write
\begin{equation}\label{eq-detXi}
\det(X_i(\cP))=\sum_{\tau\in S_{n-1}} \prod_{j\in\{1,\ldots ,n\}\backslash\{i\}} X_{j,\mu_{\tau}(j)}.
\end{equation}
The entries of $X_i(\cP)$ are either algebraic indeterminates or zero. Thus
$\det(X_i(\cP))=0$ if and only if every summand of \eqref{eq-detXi} is zero, it contains a zero entry. That is, for every $\tau\in S_{n-1}$, there exists $j\in \{1,\ldots ,n\}\backslash\{i\}$ such that $X_{j,\mu_{\tau}(j)}=0$, thus $\cL_{j,\mu_{\tau}(j)}=0$.  This proves that, $\det(X_i(\cP))=0$ if and only if there is no $\tau\in S_{n-1}$ verifying \eqref{eq-taui}.
\end{proof}

\begin{rem}\label{rem-de-se}
From Lemma \ref{lem-Xi} we can conclude that:
\begin{enumerate}
\item $\cP$ is differentially essential $\Leftrightarrow$ $\rank(X(\cP))=n-1$.

\item $\cP$ is super essential $\Leftrightarrow \det(X_i(\cP))\neq 0$, $i=1,\ldots ,n$.
\end{enumerate}
\end{rem}

Given the set $\bbP:=\{p_1,\ldots ,p_n\}$ of algebraic polynomials in $\bbK[C,U]$, $\bbK=\bbQ(X_{i,j}\mid X_{i,j}\neq 0)$, with
\[p_i:=c_i+\sum_{j=1}^{n-1}X_{i,j} u_j,\, i=1,\ldots, n,\]
a coefficient matrix $M(\bbP)$ of $\bbP$ is an $n\times (2n-1)$ matrix and it can be obtained by concatenating $X(\cP)$ with the identity matrix of size $n$,
\begin{equation}\label{eq-MbbP}
M(\bbP)=\left[\begin{array}{cccc}
\,&1&\cdots &0\\
X(\cP)&\,&\ddots&\,\\
\,&0&\cdots &1
\end{array}\right].
\end{equation}
The reduced echelon form of $M(\bbP)$ is the coefficient matrix of the reduced Gr\"{o}bner basis $\cB=\{e_0,e_1,\ldots ,e_{n-1}\}$ of the algebraic ideal $(\bbP)$ generated by $p_1,\ldots,p_n$ in $\bbK[C,U]$, with respect to lex monomial order with $u_1>\cdots >u_{n-1}>c_1>\cdots>c_n$ (\cite{Cox}, p. 95, Exercise 10). We assume that $e_0<e_1<\cdots<e_{n-1}$.

Observe that the elements of $\cB$ are linear homogeneous polynomials in $\bbK[C,U]$ and at least
\begin{equation}\label{eq-B0}
e_0\in\cB_0:=\cB\cap\bbK[C].
\end{equation}
Given a linear homogeneous polynomial $e\in\bbK[C]$, $e=\sum_{h=1}^n \chi_h c_h$, $\chi_h\in\bbK$, let $\cI(e):=\{h\in\{1,\ldots ,n\}\mid \chi_h\neq 0\}$.
Let us consider the system
\begin{equation}\label{eq-P*}
\cP^*:=\{f_h\mid h\in \cI(e_0)\}.
\end{equation}

\begin{rem}\label{rem-Idese}
Let $\cI:=\{i\in\{1,\ldots ,n\}\mid \det(X_i(\cP))=0\}$. By Remark \ref{rem-de-se} the following statements hold
\begin{equation}\label{eq-de-se}
\begin{array}{l}
\cP\mbox{ is differentially essential }\Leftrightarrow \cI\neq \{1,\ldots ,n\}\mbox{ and}\\
\cP\mbox{ is super essential }\Leftrightarrow \cI=\emptyset.
\end{array}
\end{equation}
Furthermore, if $\cP$ is differentially essential then, by Remark \ref{rem-de-se}, $\cB_0=\{e_0\}$ and by \eqref{eq-de-se}, up to a nonzero constant,
\begin{equation}\label{eq-e0}
e_0=\sum_{i\in \cI(e_0)} \det(X_i(\cP)) c_i,\mbox{ with }\cI(e_0)=\{1,\ldots ,n\}\backslash \cI,
\end{equation}
the determinant of the matrix obtained by concatenating $X(\cP)$ with the column vector containing $c_1,\ldots ,c_n$.
\end{rem}

\begin{lem}\label{Lem-P*proper}
If $\cP$ is super essential then $\cP=\cP^*$, otherwise $\cP^*\varsubsetneq\cP$.
\end{lem}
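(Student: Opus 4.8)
The plan is to read off both claims from Remark~\ref{rem-Idese} whenever $\cP$ is differentially essential, and to supply a separate Gr\"obner-basis argument for the remaining case. First suppose $\cP$ is super essential. By Remark~\ref{rem-Idese} this means $\cI=\emptyset$, and in particular $\cI\neq\{1,\ldots,n\}$, so $\cP$ is also differentially essential; Remark~\ref{rem-Idese} then gives $\cB_0=\{e_0\}$ and $\cI(e_0)=\{1,\ldots,n\}\backslash\cI=\{1,\ldots,n\}$, whence $\cP^*=\{f_h\mid h\in\cI(e_0)\}=\cP$. Conversely, if $\cP$ is not super essential then $\cI\neq\emptyset$. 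If in addition $\cP$ is differentially essential, then again by Remark~\ref{rem-Idese} we have $\cI(e_0)=\{1,\ldots,n\}\backslash\cI$, which is a nonempty (since $\cI\neq\{1,\ldots,n\}$) proper subset of $\{1,\ldots,n\}$, so $\cP^*\varsubsetneq\cP$.

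It remains to treat the case in which $\cP$ is not differentially essential, that is $r:=\rank(X(\cP))\leq n-2$ by Remark~\ref{rem-de-se}(1); this is the crux. The idea is to show $|\cB_0|\geq 2$ and then exploit the minimality of $e_0$. Since $(\bbP)$ is generated by the homogeneous linear forms $p_1,\ldots,p_n$, its degree-one part is $\sspan_{\bbK}\{p_1,\ldots,p_n\}$, and $\sum_i v_i p_i$ lies in $\bbK[C]$ precisely when $v^{T}X(\cP)=0$; hence the $\bbK$-space of linear forms in the elimination ideal $(\bbP)\cap\bbK[C]$ is isomorphic to the left kernel of $X(\cP)$, of dimension $n-r\geq 2$. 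By the elimination property of the chosen lex order (in which every $u_j$ exceeds every $c_i$), $\cB_0=\cB\cap\bbK[C]$ is the reduced Gr\"obner basis of $(\bbP)\cap\bbK[C]$; as all its elements are linear forms, it is an echelon basis of that space, so $|\cB_0|=n-r\geq 2$. This identification is exactly the extension, to arbitrary rank, of the computation of $\cB_0$ recorded in Remark~\ref{rem-Idese} for the case $r=n-1$.

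Now $e_0$ is the smallest element of $\cB_0$ for the order used to define $\cB$. Suppose, for contradiction, that $\cI(e_0)=\{1,\ldots,n\}$; then every coefficient of $e_0$ is nonzero, so its leading monomial is $c_1$, the largest of $c_1,\ldots,c_n$. Pick any $e\in\cB_0\backslash\{e_0\}$, which exists because $|\cB_0|\geq 2$: it is a linear form in $C$ whose leading monomial differs from that of $e_0$ (distinct elements of a reduced Gr\"obner basis have distinct leading monomials), hence its leading monomial is smaller than $c_1$ and $e<e_0$, contradicting the minimality of $e_0$. Therefore $\cI(e_0)\varsubsetneq\{1,\ldots,n\}$ and $\cP^*\varsubsetneq\cP$, which completes the plan. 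The only genuinely delicate point, and the one I expect to be the main obstacle, is justifying $|\cB_0|=n-\rank(X(\cP))$ cleanly; everything else is a bookkeeping consequence of Remarks~\ref{rem-de-se} and~\ref{rem-Idese}.
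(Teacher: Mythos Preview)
Your proof is correct and follows essentially the same route as the paper's. Both split into the three cases ``super essential'', ``differentially essential but not super essential'', and ``not differentially essential''; in the last case the paper simply asserts that $\rank(X(\cP))<n-1$ forces $e_1\in\cB_0$ with $e_0<e_1$, hence $\cI(e_0)\varsubsetneq\{1,\ldots,n\}$, while you spell out the kernel computation $|\cB_0|=n-\rank(X(\cP))\geq 2$ and derive the same conclusion by a leading-monomial contradiction---a legitimate and slightly more explicit justification of the same implication.
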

\begin{proof}
By Remark \ref{rem-Idese}, if $\cP$ is super essential $\cI(e_0)=\{1,\ldots ,n\}$, that is $\cP^*=\cP$. Otherwise, $\cI\neq \emptyset$ and we have two possibilities: if $\cI\neq \{1,\ldots ,n\}$ then, by \eqref{eq-e0}, $\cI(e_0)\varsubsetneq\{1,\ldots ,n\}$; if $\cI=\{1,\ldots ,n\}$ then, by Remark \ref{rem-de-se}(1) and \eqref{eq-de-se}, $\rank(X(\cP))<n-1$ and $e_1\in \cB_0$ with $e_0<e_1$, which implies $\cI(e_0)\varsubsetneq\{1,\ldots ,n\}$.
\end{proof}

We will prove next that $\cP^*$ is a super essential subsystem of $\cP$.

\begin{lem}\label{lem-P*}
\begin{enumerate}
\item For every $\cP'\varsubsetneq\cP^*$, $\rank(X(\cP'))=|\cP'|$.

\item $\rank(X(\cP^*))=|\cP^*|-1$.
\end{enumerate}
\end{lem}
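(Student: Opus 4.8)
**Proof plan for Lemma \ref{lem-P*}.**

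The plan is to extract the combinatorial content of $\cP^*$ from the Gr\"obner basis description and translate it into rank statements about $X(\cP)$ and its submatrices. First I would record the elementary fact that, since $\cB$ is the reduced Gr\"obner basis coming from the reduced echelon form of $M(\bbP)$ in \eqref{eq-MbbP}, the polynomial $e_0$ lies in the rowspace of $M(\bbP)$ and hence corresponds to a row vector $v\in\bbK^n$ with $v\cdot X(\cP)=0$; the set $\cI(e_0)$ is exactly the support of $v$. Moreover $e_0$ being the \emph{first} (smallest) element of the reduced echelon form means $v$ is the unique (up to scalar) leftmost relation, so its support is inclusion-minimal among supports of nonzero vectors in the left kernel of $X(\cP)$. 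Writing $\cP^*=\{f_h\mid h\in\cI(e_0)\}$, the matrix $X(\cP^*)$ is the submatrix of $X(\cP)$ formed by the rows indexed by $\cI(e_0)$, and the restriction $v^*$ of $v$ to those coordinates is a nowhere-zero vector with $v^*\cdot X(\cP^*)=0$.

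For part (2): the relation $v^*$ shows $\rank(X(\cP^*))\leq |\cP^*|-1$. For the reverse inequality I would argue that if $\rank(X(\cP^*))\leq |\cP^*|-2$, then the left kernel of $X(\cP^*)$ would be at least $2$-dimensional; lifting any such kernel vector back to $\bbK^n$ by padding with zeros outside $\cI(e_0)$ produces left-kernel vectors of $X(\cP)$ whose supports are contained in $\cI(e_0)$. In a space of dimension $\geq 2$ one can find a nonzero vector with support strictly smaller than $\cI(e_0)$ (e.g.\ take a suitable linear combination of two independent kernel vectors to kill one more coordinate), contradicting the minimality of $\cI(e_0)$ established above. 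Hence $\rank(X(\cP^*))=|\cP^*|-1$.

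For part (1): let $\cP'\varsubsetneq\cP^*$, so $\cP'$ corresponds to a proper subset $H\varsubsetneq\cI(e_0)$ of the rows of $X(\cP^*)$. If $\rank(X(\cP'))<|\cP'|=|H|$ then the rows of $X(\cP')$ are linearly dependent, giving a nonzero row vector $w\in\bbK^{|H|}$ with $w\cdot X(\cP')=0$; extending $w$ by zeros to all of $\bbK^n$ yields a nonzero left-kernel vector of $X(\cP)$ whose support is contained in $H\varsubsetneq\cI(e_0)$, again contradicting the minimality of $\cI(e_0)$. Therefore $\rank(X(\cP'))=|\cP'|$.

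The main obstacle is the precise justification that $\cI(e_0)$ is inclusion-minimal among the supports of nonzero left-kernel vectors of $X(\cP)$: this requires unwinding exactly how the reduced echelon form orders its rows relative to the lex order with $u_1>\cdots>u_{n-1}>c_1>\cdots>c_n$, and checking that the first purely-$C$ row $e_0$ of the echelon form is the relation with leftmost (hence, by the reduced property, smallest-support) pivot structure among all $C$-relations — equivalently, that no $\bbK$-combination of the basis elements $e_0,\dots,e_{n-1}$ lying in $\bbK[C]$ can have support properly inside $\cI(e_0)$. Once that minimality is pinned down, parts (1) and (2) are the short kernel-dimension arguments sketched above.
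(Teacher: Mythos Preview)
Your approach is essentially the same as the paper's, with a small reorganization. The paper proves part (1) by exactly the argument you describe: assuming $\rank(X(\cP'))<|\cP'|$ produces a relation $e\in(\bbP)\cap\bbK[C]$ with $\cI(e)\subsetneq\cI(e_0)$, and then the reduced Gr\"obner basis property forces $\remm(e,\cB_0)=\remm(e,e_0)\neq 0$, a contradiction. This remainder computation \emph{is} your inclusion-minimality claim, phrased as a division step; the ``obstacle'' you flag dissolves once you observe that in a reduced basis the leading variables of $e_1,e_2,\dots\in\cB_0$ do not lie in $\cI(e_0)$, so any linear element of $(\cB_0)$ with support contained in $\cI(e_0)$ is a scalar multiple of $e_0$. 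For part (2) the paper takes the dual route: the lower bound $\rank(X(\cP^*))\geq m-1$ comes from part (1) applied to any $(m-1)$-element subsystem, and the upper bound from $e_0\in(\bbP^*)\cap\bbK[C]$, whereas you argue the lower bound directly by killing a coordinate inside a $2$-dimensional kernel. Both are correct; your argument for (2) is self-contained while the paper's is a one-line corollary of (1).
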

\begin{proof}
\begin{enumerate}
\item Given a proper subsystem $\cP'$ of $\cP$, the matrix $X(\cP')$ has size $|\cP'|\times (n-1)$. Thus $\rank(X(\cP'))\leq |\cP'|$.
The coefficient matrix in $\bbK[C',U]$ of $\bbP':=\{p_h\mid f_h\in\cP'\}$, with $C'=\{c_h\mid f_h\in\cP'\}$, is
\[M(\bbP')=\left[\begin{array}{cccc}
\,&1&\cdots &0\\
X(\cP')&\,&\ddots&\,\\
\,&0&\cdots &1
\end{array}\right].\]

If $\rank(X(\cP'))< |\cP'|$ then there exists $e\in (\bbP')\cap\bbK[C']$, the vector whose coefficients are in the last row of the reduced echelon form of $M(\bbP')$. Therefore, $\cP'\varsubsetneq\cP^*$ together with \eqref{eq-P*} imply
\begin{equation}\label{eq-ie0}
\cI(e)\subseteq \{h\in\{1,\ldots ,n\}\mid f_h\in\cP'\}\varsubsetneq \cI(e_0),
\end{equation}
and $e\in (\bbP')\cap\bbK [C]\subset (\bbP)\cap\bbK [C]=(\cB_0)$.
This contradicts that $\cB_0$ is a Gr\"{o}bner basis of $(\bbP)\cap\bbK[C]$, since $\remm(e,\cB_0)$, the remainder of the division of $e$ by $\cB_0$, equals by \eqref{eq-ie0} $\remm(e,e_0)\neq 0$. Therefore $\rank(X(\cP'))= |\cP'|$.

\item Let $m=|\cP^*|$ and $\bbP^*:=\{p_h\mid h\in \cI(e_0)\}$. By 1, $\rank(X(\cP'))=m-1$ for every $\cP'\varsubsetneq \cP^*$ with $|\cP'|=m-1$. Thus $\rank(X(\cP^*))\geq m-1$ because $X(\cP')$ is a submatrix of $X(\cP^*)$.
On the other hand $e_0\in (\bbP^*)\cap\bbK[C]$ implies $\rank(X(\cP^*))<m$, otherwise the reduced echelon form of $M(\bbP^*)$ provides no vector in $\bbK [C]$. Therefore $\rank(X(\cP^*))=m-1$.
\end{enumerate}
\end{proof}

Given a proper subsystem $\cP'=\{g_1:=f_{i_1},\ldots ,g_m:=f_{i_m}\}$ of $\cP$ and $J=\{j_1,\ldots ,j_{m-1}\}\subset \{1,\ldots ,n-1\}$,
let $Y^J(\cP')$ be the $m\times (m-1)$ matrix
\begin{equation}\label{eq-YP}
Y^J(\cP'):=(Y_{h,k}),\,\,\, Y_{h,k}:=X_{i_h,j_k}, h=1,\ldots ,m, k=1,\ldots ,m-1.
\end{equation}
Denote by $Y^J_h(\cP')$ the submatrix of $Y^J(\cP')$ obtained by removing the $h$th row, $h=1,\ldots ,m$.
If $\cP'$ is super essential then there exists $J\subset \{1,\ldots ,n-1\}$, $|J|=m-1$ such that:
\begin{equation}\label{eq-gh}
g_h=a_{i_h}+\sum_{j\in J} \cL_{i_h,j}(u_j), h=1,\ldots ,m,
\end{equation}
and
\begin{equation}\label{eq-detYh}
\det(Y^J_h(\cP'))\neq 0,\, h=1,\ldots ,m.
\end{equation}
That is, $\nu(\cP')=|\cP'|-1$ and Remark \ref{rem-de-se}(2) is verified.

\begin{thm}\label{thm-subSC}
If $\cP$ is not super essential then, the system $\cP^*$ given by \eqref{eq-P*} is a proper super essential subsystem of $\cP$, with $\nu(\cP^*)=|\cP^*|-1$.
\end{thm}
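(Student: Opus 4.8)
The plan is to assemble the conclusion from the structural lemmas already established, namely Lemma \ref{Lem-P*proper} and Lemma \ref{lem-P*}, together with the characterization of super essentiality in Remark \ref{rem-de-se}(2) via the nonvanishing of the minors $\det(X_i(\cP))$. Since $\cP$ is assumed not to be super essential, Lemma \ref{Lem-P*proper} immediately gives $\cP^*\varsubsetneq\cP$, so the only real content to prove is that $\cP^*$ is itself super essential and that $\nu(\cP^*)=|\cP^*|-1$.

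First I would set $m:=|\cP^*|$ and invoke Lemma \ref{lem-P*}(2) to get $\rank(X(\cP^*))=m-1$. Since $X(\cP^*)$ is the $m\times(n-1)$ coefficient-pattern matrix of $\cP^*$, having rank $m-1$ means there is a set $J\subset\{1,\ldots,n-1\}$ with $|J|=m-1$ such that the $m\times(m-1)$ submatrix $Y^J(\cP^*)$ (in the notation of \eqref{eq-YP}) also has rank $m-1$; equivalently the differential operators $\cL_{i_h,j}$ for $j\notin J$ all vanish, which after reindexing gives the description \eqref{eq-gh} and shows $\nu(\cP^*)=m-1=|\cP^*|-1$. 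The point is then that $\cP^*$, viewed as a system in the $m-1$ indeterminates $\{u_j\mid j\in J\}$, satisfies hypotheses $(\cP1)$--$(\cP4)$, so Remark \ref{rem-de-se}(2) applies to it: $\cP^*$ is super essential if and only if $\det(Y^J_h(\cP^*))\neq 0$ for every $h=1,\ldots,m$.

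The main step is therefore to verify the nonvanishing of all $m$ maximal minors $\det(Y^J_h(\cP^*))$. For each $h$, the matrix $Y^J_h(\cP^*)$ is the coefficient-pattern matrix of the proper subsystem $\cP'=\cP^*\setminus\{g_h\}$ restricted to the columns indexed by $J$; since $|\cP'|=m-1$ and $\cP'\varsubsetneq\cP^*$, Lemma \ref{lem-P*}(1) gives $\rank(X(\cP'))=m-1=|\cP'|$. Because the columns of $X(\cP')$ outside $J$ are identically zero (by the choice of $J$ coming from $\rank(X(\cP^*))=m-1$ and the fact that $X(\cP')$ is a submatrix of $X(\cP^*)$), the rank of $X(\cP')$ is carried entirely by its $J$-columns, i.e. $\rank(Y^J_h(\cP'))=\rank(Y^J_h(\cP^*))=m-1$; hence this square matrix of size $(m-1)\times(m-1)$ is nonsingular, $\det(Y^J_h(\cP^*))\neq 0$. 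Running this over all $h$ yields \eqref{eq-detYh}, and by Remark \ref{rem-de-se}(2) (applied to $\cP^*$ in its own $m-1$ variables) $\cP^*$ is super essential, while $\nu(\cP^*)=|\cP^*|-1$ was already noted.

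The hard part will be the bookkeeping around the column set $J$: one must argue cleanly that the zero columns of $X(\cP^*)$ outside $J$ are exactly accounted for by $\rank(X(\cP^*))=m-1$ (there could a priori be more than $m-1$ nonzero columns, but Lemma \ref{lem-P*}(1) forces every $(m-1)$-subset of rows to already have rank $m-1$, so the column span of $X(\cP^*)$ is $(m-1)$-dimensional and any $(m-1)$ columns that span it may be taken as $J$), and that this same $J$ works simultaneously for all $h$ — which it does, precisely because $\rank(Y^J_h(\cP^*))=m-1$ for every $h$ by Lemma \ref{lem-P*}(1). Once that is pinned down, the proof is a short deduction; no new computation beyond the two parts of Lemma \ref{lem-P*} is required.
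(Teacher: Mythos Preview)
Your overall strategy---deduce $\nu(\cP^*)=|\cP^*|-1$ and $\det(Y^J_h(\cP^*))\neq 0$ for all $h$ directly from the two parts of Lemma~\ref{lem-P*}---is sound and, once completed, is cleaner than the paper's iterative argument. But there is a genuine gap at the step you yourself flag as the ``hard part'': the claim that the columns of $X(\cP^*)$ outside $J$ are identically zero. You first assert this as an ``equivalently'' from $\rank(X(\cP^*))=m-1$, which is false as stated (a matrix with independent-indeterminate entries of rank $m-1$ can have more than $m-1$ nonzero columns); you then try to recover it from Lemma~\ref{lem-P*}(1), but the sentence you write only restates that the column span has dimension $m-1$ and does not exclude further nonzero columns lying in that span. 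The subsequent claim ``$\rank(Y^J_h(\cP^*))=m-1$ for every $h$ by Lemma~\ref{lem-P*}(1)'' is then circular: Lemma~\ref{lem-P*}(1) gives $\rank(X(\cP'))=m-1$ for the full $(m-1)\times(n-1)$ row-submatrix, and passing to the $J$-columns requires precisely the vanishing you have not established.

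The gap can be closed by a short combinatorial argument. Since the nonzero entries of $X(\cP^*)$ are algebraically independent indeterminates, its rank equals the maximum matching in the bipartite graph on rows $\{1,\dots,m\}$ and columns $\{1,\dots,n-1\}$ with an edge at each nonzero entry. By K\"onig's theorem the minimum vertex cover has size $m-1$; if this cover used a row $r$, then deleting row $r$ would leave a subgraph covered by $m-2$ vertices, forcing the matching number of the remaining $(m-1)$-row subgraph to be at most $m-2$, contradicting Lemma~\ref{lem-P*}(1). Hence the minimum cover consists of $m-1$ columns, and these are the only nonzero columns; take $J$ to be this set. Now $Y^J_h(\cP^*)$ carries all nonzero entries of $X(\cP')$, so $\det(Y^J_h(\cP^*))\neq 0$ follows at once from $\rank(X(\cP'))=m-1$. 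The paper proves the same vanishing \eqref{eq-proof1} by a different, more hands-on route: starting from a single nonzero minor $\det(Y_m)$, it argues row by row that the nonzero entries lie in $J$ and that further minors $\det(Y_t)$ are nonzero, invoking Lemma~\ref{lem-P*}(1) only to guarantee termination of the loop. Your approach is shorter once the matching argument is supplied.
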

\begin{proof}
We can write $\cP^*=\{g_1:=f_{i_1},\ldots ,g_m:=f_{i_m}\}$.
By Lemma \ref{lem-P*}, there exists $J=\{j_1,\ldots ,j_{m-1}\}\varsubsetneq\{1,\ldots ,n-1\}$, such that
\begin{equation}\label{eq-detYm}
\det(Y^J_m(\cP^*))\neq 0.
\end{equation}
Let us denote $Y^J(\cP^*)$ simply by $Y(\cP^*)$ and $Y^J_h(\cP^*)$ by $Y_h(\cP^*)$, $h=1,\ldots ,m$, in the remaining parts of the proof.
Observe that $Y(\cP^*)$ is a submatrix of $X(\cP^*)$.
We will prove that, the only nonzero entries of $X(\cP^*)$ are the ones in the submatrix $Y(\cP^*)$, that is \eqref{eq-gh} is verified or equivalently
\begin{equation}\label{eq-proof1}
p_{i_h}=c_{i_h}+\sum_{k=1}^{m-1} Y_{h,k} u_{j_k}, h=1,\ldots ,m,
\end{equation}
and also
\begin{equation}\label{eq-proof2}
\det(Y_h(\cP^*))\neq 0,\, h=1,\ldots ,m.
\end{equation}

For this purpose, we will prove the following claims. For $l\in\{1,\ldots ,m\}$, if $\det(Y_l(\cP^*))\neq 0$ then
\begin{equation}\label{eq-gl}
p_{i_l}=c_{i_l}+\sum_{k=1}^{m-1}Y_{l,k} u_{j_k}\mbox{ and }
\end{equation}
there exists a bijection $\eta_{l}:\{1,\ldots ,m\}\backslash\{l\}\longrightarrow \{1,\ldots ,m-1\}$ such that
\begin{equation}\label{eq-Yt}
\det(Y_t(\cP^*))\neq 0,\, \forall t\in T_l:=\{t\in\{1,\ldots ,m\}\backslash\{l\}\mid Y_{l,\eta_l(t)}\neq 0\}.
\end{equation}
\begin{enumerate}
\item Proof of \eqref{eq-gl}. Otherwise, there exists $j\in\{1,\ldots ,n-1\}\backslash J$ such that $X_{i_l ,j}\neq 0$.
This means that the matrix
\[\left[\begin{array}{cc}
\,& X_{i_1,j}\\
Y(\cP^*)&\vdots\\
\,&X_{i_m,j}\end{array}\right],\]
is nonsingular, which contradicts $\rank(X(\cP^*))=m-1$, see Lemma \ref{lem-P*}.

\item Proof of \eqref{eq-Yt}. Since $\det(Y_l(\cP^*))\neq 0$, by Lemma \ref{lem-Xi}, there exists $\tau_l\in S_{m-1}$ and a
 bijection
\begin{align*}
&\eta_{l}:\{1,\ldots ,m\}\backslash\{l\}\longrightarrow \{1,\ldots ,m-1\},\\
&\eta_{l}(h):=\left\{\begin{array}{ll}\tau_l(m-h),&h=1,\ldots ,l-1,\\\tau_l(m-h+1),&h=l+1,\ldots ,m,\end{array}\right.
\end{align*}
such that
\begin{equation}\label{eq-Ytaum}
Y_{h,\eta_l(h)}\neq 0,\, h\in \{1,\ldots ,m\}\backslash\{l\}.
\end{equation}
Given $t\in T_l$ and the permutation $\rho(l,t):\{1,\ldots ,m\}\longrightarrow \{1,\ldots ,m\}$, such that
\[\rho(l,t)(h)=\left\{
\begin{array}{ll}
t,& h=l,\\
l,& h=t,\\
h,& h\in \{1,\ldots ,m\}\backslash\{t,l\},
\end{array}
\right.
\]
we define the bijection
\begin{equation}\label{eq-mut}
\eta_t:\{1,\ldots ,m\}\backslash\{t\}\longrightarrow \{1,\ldots ,m-1\}, \eta_{t}=\eta_l\circ \rho(l,t).
\end{equation}
Thus, by \eqref{eq-Ytaum} and the definition of $T_l$, $Y_{h,\eta_t(h)}\neq 0$, $h\in\{1,\ldots ,m\}\backslash\{t\}$, which proves that $\det(Y_t(\cP^*))\neq 0$.
\end{enumerate}

We are ready to prove \eqref{eq-proof1} and \eqref{eq-proof2}.
By \eqref{eq-detYm} and \eqref{eq-Yt}, already \eqref{eq-proof2} holds for $h\in T_m\cup \{m\}$ and, by \eqref{eq-gl}, \eqref{eq-proof1} holds for $h\in T_m\cup \{m\}$. We follow the next loop to prove \eqref{eq-proof1} and \eqref{eq-proof2} for $h\in \{1,\ldots ,m-1\}\backslash T_m$.

\begin{enumerate}
\item Set $T:=T_m$ and $\cP':=\{g_h\mid h\in T\cup\{m\}\}$.

\item If $T=\{1,\ldots ,m-1\}$ then $\cP^*=\cP'$, which proves \eqref{eq-proof1} and \eqref{eq-proof2}, by \eqref{eq-detYm}, \eqref{eq-Yt} and \eqref{eq-gl}.

\item If $T\neq \{1,\ldots ,m-1\}$ then, there exists $l\in T$ such that $T_l\backslash (T\cup \{m\})\neq\emptyset$ (see below). Set $T:=(T\cup T_l)\backslash \{m\}$, $\cP':=\{g_h\mid h\in T\cup\{m\}\}$ and observe that by \eqref{eq-Yt}, \eqref{eq-proof2} holds for $h\in T\cup \{m\}$ and by \eqref{eq-gl}, \eqref{eq-proof1} holds for $h\in T\cup \{m\}$. Go to step 2.
\end{enumerate}
We prove next that the loop finishes because each time we go to step 3 at least one new element is added to $T$. More precisely, we prove that (in the situation of step 3) there exists $l\in T$ such that $T_l\backslash (T\cup \{m\})\neq\emptyset$.
We assume that at some iteration $T_l\subseteq T\cup\{m\}$, $\forall l\in T$ to reach a contradiction. Given $l\in T$, if $l\in T_m$, by \eqref{eq-mut}, $\eta_l=\eta_m\circ \rho(m,l)$ and
\[\{\eta_l(t)\mid t\in T_l\}\subseteq \{\eta_m(t)\mid t\in T\},\]
else there exist $l_1,\ldots ,l_p\in\{1,\ldots ,m-1\}$ such that
$l\in T_{l_p}$, $l_k\in T_{l_{k-1}}$, $k=2,\ldots ,p$ and $l_1\in T_m$, by \eqref{eq-mut}
\begin{align*}
&\{\eta_l(t)\mid t\in T_l\}\subseteq \{\eta_{l_p}(t)\mid t\in T\backslash{\{l_p\}}\}=\\
&\{\eta_{l_{p-1}}(t)\mid t\in T\backslash{\{l_{p-1}\}}\}=\cdots =\{\eta_{l_1}(t)\mid t\in T\backslash{\{l_1\}}\}\subseteq\{\eta_{m}(t)\mid t\in T\}.
\end{align*}
By definition of $\cP'$ and $T_l$, we have proved that $\nu(\cP')\leq |T|$
and thus $\rank(X(\cP'))\leq \nu(\cP')\leq |T|$, contradicting Lemma \ref{lem-P*} since $\cP'\varsubsetneq \cP^*$ and $|\cP'|=|T|+1$.
\end{proof}

In particular, Theorem \ref{thm-subSC} shows that if $\cP$ is not super essential then $\cP$ is redundant,
 which together with Proposition \ref{prop-irse} proves the next result.

\begin{cor}
A linear differential system $\cP$ is irredundant if and only if it is super essential.
\end{cor}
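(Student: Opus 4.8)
The plan is to prove the equivalence by combining the two results already established in this section, handling each implication separately via its natural direction.

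First I would dispatch the easy implication: if $\cP$ is super essential, then Proposition \ref{prop-irse} directly gives that $\cP$ is irredundant, i.e.\ every proper subsystem $\cP'$ satisfies $|\cP'|\leq\nu(\cP')$. Nothing further is needed here.

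For the converse I would argue by contraposition: assume $\cP$ is not super essential and show it is redundant, i.e.\ exhibit a proper subsystem $\cP'$ with $\nu(\cP')<|\cP'|$. The candidate is the subsystem $\cP^*$ defined in \eqref{eq-P*}. By Lemma \ref{Lem-P*proper}, since $\cP$ is not super essential, $\cP^*\varsubsetneq\cP$, so $\cP^*$ is a genuine proper subsystem. By Theorem \ref{thm-subSC}, $\nu(\cP^*)=|\cP^*|-1<|\cP^*|$. Hence $\cP^*$ violates the defining inequality of irredundancy, so $\cP$ is redundant. Contrapositively, every irredundant $\cP$ is super essential, which together with the first paragraph completes the equivalence.

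There is essentially no obstacle remaining at this stage: all the difficulty was absorbed into Theorem \ref{thm-subSC} (the construction of the super essential subsystem $\cP^*$ and the loop argument showing $\nu(\cP^*)=|\cP^*|-1$) and into Proposition \ref{prop-irse}. The only point to state carefully is that the $\cP^*$ produced by Theorem \ref{thm-subSC} is indeed a \emph{proper} subsystem in the non--super--essential case, which is exactly the content of Lemma \ref{Lem-P*proper}; I would cite it explicitly so the contraposition is airtight.
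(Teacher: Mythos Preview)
Your proposal is correct and matches the paper's own argument: the paper derives the corollary directly from Proposition \ref{prop-irse} for one implication and from Theorem \ref{thm-subSC} (which already records that $\cP^*$ is a \emph{proper} subsystem with $\nu(\cP^*)=|\cP^*|-1$ when $\cP$ is not super essential) for the contrapositive of the other. Your explicit citation of Lemma \ref{Lem-P*proper} for properness is fine but redundant, since Theorem \ref{thm-subSC} already states this.
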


The next result shows that if $\cP$ is differentially essential then $\cP^*$ is in fact the only super essential subsystem of $\cP$.
This new characterization of differentially essential systems (in the linear case) has now a flavor similar to the essential condition in the algebraic case, see \cite{St}, Section 1.

\begin{thm}\label{thm-seunique}
$\cP$ is differentially essential if and only if $\cP$ has a unique super essential subsystem.
\end{thm}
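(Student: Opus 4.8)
The plan is to prove both implications using the machinery already developed, especially Lemma~\ref{lem-Xi}, Remark~\ref{rem-de-se}, Theorem~\ref{thm-subSC}, and the Gr\"obner basis description of $\cB_0$. One direction is essentially immediate: if $\cP$ has a unique super essential subsystem, then in particular it has at least one, say $\cP^*$; since $\cP^*$ is super essential, by Lemma~\ref{lem-Xi} applied inside $\cP^*$ (together with Lemma~\ref{lem-P*}) we get $\rank(X(\cP^*))=|\cP^*|-1$, and hence $\rank(X(\cP))\geq |\cP^*|-1$. To push this up to $\rank(X(\cP))=n-1$, I would argue that if $\rank(X(\cP))<n-1$, then $\cB_0$ contains at least two elements $e_0<e_1$, each of which, by the reasoning in Theorem~\ref{thm-subSC} applied to the associated subsystems $\{f_h\mid h\in\cI(e_0)\}$ and $\{f_h\mid h\in\cI(e_1)\}$, yields a super essential subsystem; one then has to check these two subsystems are genuinely distinct (their index sets $\cI(e_0)$ and $\cI(e_1)$ differ because $e_0\neq e_1$ are distinct reduced Gr\"obner basis elements, so neither is a scalar multiple of the other, and a minimal support argument as in Lemma~\ref{lem-P*}(1) separates them). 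This contradicts uniqueness, so $\rank(X(\cP))=n-1$, i.e.\ $\cP$ is differentially essential by Remark~\ref{rem-de-se}(1).

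For the forward direction, assume $\cP$ is differentially essential, so $\rank(X(\cP))=n-1$ and, by Remark~\ref{rem-Idese}, $\cB_0=\{e_0\}$ with $e_0$ given by \eqref{eq-e0}. Theorem~\ref{thm-subSC} (combined with Lemma~\ref{Lem-P*proper}) already gives that $\cP^*=\{f_h\mid h\in\cI(e_0)\}$ is a super essential subsystem. It remains to show it is the \emph{only} one. So suppose $\cQ\subseteq\cP$ is any super essential subsystem; write $\cQ=\{f_h\mid h\in H\}$ for $H\subseteq\{1,\ldots,n\}$. Since $\cQ$ is super essential, by Lemma~\ref{lem-P*}-type reasoning $\rank(X(\cQ))=|\cQ|-1$, so the reduced echelon form of the corresponding matrix $M(\bbP_H)$ (with $\bbP_H=\{p_h\mid h\in H\}$) produces exactly one nonzero relation $e\in(\bbP_H)\cap\bbK[C_H]$, and moreover $\cI(e)=H$: indeed $\cI(e)\subseteq H$ trivially, and if $\cI(e)\subsetneq H$ then removing an index in $H\setminus\cI(e)$ would give a proper subsystem of $\cQ$ of rank $<$ its size, contradicting that super essential systems are irredundant (Proposition~\ref{prop-irse}). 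Now $e\in(\bbP)\cap\bbK[C]=(\cB_0)=(e_0)$, and since both $e$ and $e_0$ are linear homogeneous, $e$ is a scalar multiple of $e_0$; hence $H=\cI(e)=\cI(e_0)$ and $\cQ=\cP^*$.

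The main obstacle I anticipate is the reverse direction's argument that $\rank(X(\cP))<n-1$ forces \emph{two distinct} super essential subsystems rather than just two relations in $\cB_0$: one must run the construction of Theorem~\ref{thm-subSC} not on $\cP$ itself (which may fail to be differentially essential) but on each subsystem cut out by $\cI(e_0)$ and $\cI(e_1)$, verifying that each such subsystem satisfies the hypothesis ``$\rank(X(\cdot))=|\cdot|-1$'' needed to invoke that theorem, and then confirming $\cI(e_0)\neq\cI(e_1)$. The cleanest route is probably to observe that $e_0<e_1$ in the lex order on $\cB_0$ already forces the leading variables of $e_0$ and $e_1$ to differ, hence $\cI(e_0)\neq\cI(e_1)$, and that minimality of each (being reduced Gr\"obner basis elements) gives exactly the rank-deficiency-by-one property required. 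Once that bookkeeping is in place, the rest is a direct application of the already-proven structural results.
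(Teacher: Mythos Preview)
Your forward direction follows the paper's line almost exactly, but there is one genuine slip: to conclude $\cI(e)=H$ you invoke Proposition~\ref{prop-irse} (super essential $\Rightarrow$ irredundant). Irredundancy only says $|\cP'|\le \nu(\cP')$ for proper $\cP'\subsetneq\cQ$; it bounds the number of nonzero columns, not the rank, so ``rank $<$ size'' does not contradict it. The paper closes this step differently and more directly: for a super essential $\cQ=\{f_{t_1},\dots,f_{t_s}\}$ one writes the unique relation explicitly as $e=\sum_l \det(Y^K_l(\cQ))\,c_{t_l}$, and super essentiality (Remark~\ref{rem-de-se}(2), equivalently \eqref{eq-detYh}) says \emph{every} such minor is nonzero, so $\cI(e)=\{t_1,\dots,t_s\}$ immediately. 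Replace your irredundancy appeal with this and the argument is complete.

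For the reverse direction you take a genuinely different route from the paper. You stay with the single reduced basis $\cB_0=\{e_0,e_1,\dots\}$ and try to extract a second super essential subsystem from $e_1$; this is correct, but it forces you to re-run the proofs of Lemma~\ref{lem-P*} and Theorem~\ref{thm-subSC} with $e_1$ in place of $e_0$ (the minimality used there is precisely that no leading term of another $\cB_0$-element lies in $\cI(e_0)$, and the same holds for $\cI(e_1)$ by reducedness---so it works, but it is extra verification). The paper avoids this by a neat trick: since $\rank(X(\cP))<n-1$ forces $|\cB_0|\ge 2$, the leading variable $c_\iota$ of $e_0$ satisfies $\iota\ge 2$; permuting the $c$-order by swapping $c_1$ and $c_\iota$ and recomputing the reduced basis yields a new smallest element $e'_0$ whose leading variable cannot be $c_\iota$, hence $\iota\notin\cI(e'_0)$ while $\iota\in\cI(e_0)$. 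One then applies Theorem~\ref{thm-subSC} verbatim (its proof is insensitive to which lex order on $C$ is used) to $e'_0$. Both approaches hinge on a mild extension of Theorem~\ref{thm-subSC}'s scope---yours to another basis element, the paper's to another monomial order---but the paper's reduction is shorter to justify.
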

\begin{proof}
\begin{enumerate}
\item If $\cP$ is differentially essential, by Remark \ref{rem-de-se} and \eqref{eq-B0}, $(\bbP)\cap\bbK[C]=(e_0)$.
By Theorem \ref{thm-subSC}, $\cP^*$ is super essential.
Let us assume that there exists a super essential subsystem $\cP'=\{f_{t_1},\ldots ,f_{t_s}\}$ of $\cP$ different from $\cP^*$. This means that
$\{t_1,\ldots,t_s\}\neq \cI(e_0)=\{i_1,\ldots ,i_m\}$.
Let $\bbP':=\{p_i\mid f_i\in\cP'\}$, by \eqref{eq-detYh} and \eqref{eq-e0}, $(\bbP')\cap\bbK [C]=(e)$ with
\begin{align*}
e=&\sum_{l=1}^s \det(Y^K_{l}(\cP')) c_{t_l},\mbox{ and every }\det(Y^K_{l}(\cP'))\neq 0,
\end{align*}
for $K\subset \{1,\ldots ,n-1\}$, $|K|=s-1$.
This contradicts that $e\in (\bbP)\cap\bbK[C]=(e_0)$ because
$\cI(e)=\{t_1,\ldots,t_s\}\neq \cI(e_0)$.

\item Conversely, if $\cP$ is not differentially essential then, by Remark \ref{rem-de-se}, $\rank (X(\cP))<n-1$. This implies that the initial variable $c_{\iota}$, $\iota\in\{1,\ldots ,n\}$ of $e_0$ w.r.t. the order $c_1>c_2>\cdots >\c_n$ verifies $\iota\geq 2$ because $e_0$ is obtained from \eqref{eq-MbbP}. Let $\rho\in S_n$ be the permutation of $1$ and $\iota$. By the same reasoning, if we compute the reduced Gr\" obner basis $\cB'=\{e'_0,\ldots ,e'_{n-1}\}$ of $(\bbP)$ w.r.t. lex monomial order, with
    \[u_1>\cdots >u_{n-1}>c_{\iota}>c_{\rho(2)}>\cdots >c_{\rho(n)}\]
    and $e'_0<\cdots <e'_{n-1}$, then the initial variable of $e'_0$ is not $c_{\iota}$. Thus $\cI(e'_0)\neq \cI(e_0)$ and by Theorem \ref{thm-subSC}, $\{f_i\mid i\in \cI(e'_0)\}$ is also a super essential subsystem of $\cP$, different from $\cP^*$.
\end{enumerate}
\end{proof}

\begin{exs}\label{exs-se-de}
\begin{enumerate}
\item Given the system $\cP=\{f_1=\cL_{1,1}(u_1)+\cL_{1,2}(u_2),f_2=\cL_{2,1}(u_1),f_3=\cL_{3,2}(u_2)\}$ the matrix $X(\cP)$ defined by \eqref{eq-XP} equals
\[
X(\cP)=\left[\begin{array}{cc}
x_{1,1} & x_{1,2}\\
x_{2,1} & 0 \\
0 & x_{3,2}
\end{array}\right].
\]
By Remark \ref{rem-de-se}, $\cP$ is supper essential and, by Lemma \ref{Lem-P*proper}, $\cP^*=\cP$.

\item Let $\cP$ be a system such that
\[
X(\cP)=\left[\begin{array}{ccc}
x_{1,1} & x_{1,2} & 0\\
x_{2,1} & 0 & x_{2,3}\\
0 & x_{3,2} & 0\\
0 & x_{4,2} & 0
\end{array}\right].
\]
By Remark \ref{rem-de-se}, $\cP$ is differentially essential but it is not super essential. The reduced echelon form of the matrix $M(\bbP)$ in \eqref{eq-MbbP} is
\[
E=\left[\begin{array}{ccccccc}
x_{1,1} & x_{1,2} & 0 & 1 & 0 & 0 & 0\\
0 & x_{3,2} & 0 & 0 & 0 & 1 & 0\\
0 & 0 & x_{2,3} & -x_{2,1}/x_{1,1} & 1 & x_{2,1} x_{1,2}/ x_{1,1} x_{3,2} & 0\\
0 & 0 & 0 & 0 & 0 & -x_{4,2}/x_{3,2} & 1
\end{array}\right].
\]
The columns of $E$ are indexed by $u_1 > u_2 > u_3 > c_1 > c_2 > c_3 > c_4$ and its last row gives the coefficients of $e_0$, see \eqref{eq-B0}. Thus $\cI(e_0)=\{3,4\}$ and $\cP^*=\{f_3,f_4\}$ is the only super essential subsystem of $\cP$, by Theorems \ref{thm-subSC} and \ref{thm-seunique}.

\item Let $\cP$ be a system such that
\[
X(\cP)=\left[\begin{array}{ccc}
x_{1,1} & x_{1,2} & x_{1,3}\\
0 & x_{2,2} & 0\\
0 & x_{3,2} & 0\\
0 & x_{4,2} & 0
\end{array}\right].
\]
By Remark \ref{rem-de-se}, $\cP$ is not differentially essential and thus it is not super essential either. The reduced echelon form of the matrix $M(\bbP)$ in \eqref{eq-MbbP} is
\[
E=\left[\begin{array}{ccccccc}
x_{1,1} & x_{1,2} & x_{1,3} & 1 & 0 & 0 & 0\\
0 & x_{2,2} & 0 & 0 & 1 & 0 & 0\\
0 & 0 & 0 & 0 & -x_{3,2}/x_{2,2} & 1 & 0\\
0 & 0 & 0 & 0 & 0 & -x_{4,2}/x_{3,2} & 1
\end{array}\right].
\]
The columns of $E$ are indexed by $u_1 > u_2 > u_3 > c_1 > c_2 > c_3 > c_4$ and its last two rows give the coefficients of $e_0<e_1$ such that $\cB_0=\{e_0,e_1\}$, see \eqref{eq-B0}. Thus $\{f_3,f_4\}$ is a super essential subsystem of $\cP$ but in this case $\{f_2,f_3\}$ and $\{f_2,f_4\}$ are also super essential subsystems of $\cP$.
\end{enumerate}
\end{exs}

\section{Differential elimination for systems of linear DPPEs}\label{sec-linDPPEs}

In this section, we set $\bbD=\cK\{C\}$ and consider a system of linear differential polynomials in $\bbD\{U\}=\cK\{C,U\}$,
\begin{equation}\label{eq-PF}
\cP=\{F_i:=c_i-H_i(U), i=1,\ldots, n\},
\end{equation}
with $-H_i(U)=\sum_{j=1}^{n-1} \cL_{i,j}(u_j)$, $\cL_{i,j}\in\cK[\partial]$.
Observe that $\cP$ verifies ($\cP$2) and ($\cP$3) (as in Section \ref{sec-Preliminary}), let us assume ($\cP$1) and ($\cP$4).
Let $[\cP]_{\bbD\{U\}}$ be the differential ideal generated by $\cP$ in $\bbD \{U\}$.
By \cite{Gao}, Lemmas 3.1 and 3.2, $[\cP]_{\bbD\{U\}}$ is a differential prime ideal whose elimination ideal in $\bbD$ equals
\[\id (\cP):=[\cP]_{\bbD\{U\}}\cap\bbD=\{f\in\bbD\mid f(H_1(U),\ldots ,H_n (U))=0\}.\]
It is called in \cite{Gao} the implicit ideal of the system of linear differential polynomial parametric equations (linear DPPEs)
\begin{equation*}\label{DPPEs}
\left\{\begin{array}{ccc}c_1 &= & H_1 (U),\\ & \vdots  & \\
c_n&= & H_n (U).\end{array}\right.
\end{equation*}
Let $\PS\subset\partial\cP$ and $\cU\subset\{U\}$ be sets verifying $(\ps1)$ and $(\ps2)$ (as in Section \ref{sec-diff res formulas} but with $\cP$ as in \eqref{eq-PF}). The set $\PS$ belongs to the polynomial ring $\cK[\cC_{\PS},\cU]$, with
\[\cC_{\PS}:=\{c_{i,k}\mid k\in [0,L_i]\cap\bbZ,\, i=1,\ldots ,n\}.\]
Let $(\PS)$ be the algebraic ideal generated by $\PS$ in $\cK[\cC_{\PS},\cU]$.

The implicitization of linear DPPEs by differential resultant formulas was studied in \cite{RS} and \cite{R11}.
The results in \cite{RS} and \cite{R11} were written for specific choices of $\PS$ and $\cU$, as described in Remark \ref{rem-psuR}.
In this section, some of the results in \cite{R11} are presented for general $\PS$ and $\cU$, to be used in Section \ref{sec-Computation of the sparse}, namely Theorem \ref{thm-dimid}. In addition, the perturbation methods in \cite{R11}, Section 6 are extended to be used with formula $\dfres (\cP)$. We also emphasize on the relation between the implicit ideal of $\cP$ and the implicit ideals of its subsystems.

Let $\cP'$ be a subsystem of $\cP$.
If $|\cP'|=m$ then $\cP'=\{F_{h_1},\ldots ,F_{h_m}\}$ and the implicit ideal of $\cP'$ equals
\begin{equation}\label{eq-idPp}
\id (\cP')=\{f\in\cK\{C'\}\mid f(H_{h_1}(U),\ldots ,H_{h_m} (U))=0\},
\end{equation}
where $C'=\{c_i\mid F_i\in\cP'\}$. Let $\bbD':=\cK\{C'\}$.
If $|\cP'|\leq\nu(\cP')$ then it may happen that $\id (\cP')=\{0\}$, see Example \ref{ex-motivation}(1).

We use next the notions of characteristic set, generic zero and saturated ideal, which are classical in differential algebra and can be found in \cite{Ritt}, \cite{Kol}, and in the preliminaries of some more recent works as \cite{H} and \cite{GLY}.
If $|\cP'|>\nu(\cP')$, by \cite{Gao}, Lemma 3.1, $\id (\cP')$ is a differential prime ideal with generic zero $(H_{h_1}(U),\ldots ,H_{h_m} (U))$. Let $\cC$ be a characteristic set of $\id(\cP')$ (w.r.t. any ranking). The differential dimension of $\id(\cP')$ is
$\dim (\id(\cP'))=m-|\cC|\leq m-1$
and coincides with the differential transcendence degree over $\cK$ of
$\cK\langle H_{h_1}(U),\ldots ,H_{h_m} (U)\rangle$, see \cite{CH}, Section 4.2.
If $\cP$ is redundant then, there exists $\cP'\varsubsetneq \cP$, with $\nu (\cP')<|\cP'|$ and, by the previous observation,
\begin{equation}\label{eq-idp}
\{0\}\neq \id (\cP')\subset\id(\cP).
\end{equation}
Let $U'$ be the subset of $U$ such that $|U'|=\nu (\cP')$ and $\cP'\subset \bbD'\{U'\}$.
Since $\cP'$ is a set of linear differential polynomials, a characteristic set $\cA$ of $[\cP']_{\bbD'\{U'\}}$, w.r.t. the ranking $\frak{r}$, obtained for instance by \cite{H}, Algorithm 7.1, is a set of linear differential polynomials in $\bbD'\{U'\}$. If $|\cP'|>\nu(\cP')$ then by \cite{Gao}, Theorem 3.1, $\cA_0:=\cA\cap\bbD'$ is a characteristic set of $\id(\cP')$. By \cite{Kol}, Lemma 2, page 167 and the fact that the elements in $\cA_0$ are linear differential polynomials in $\bbD'$,
\[\id(\cP')=\sat(\cA_0)=[\cA_0]_{\bbD'},\]
where $\sat(\cA_0)$ is the saturated ideal of $\cA_0$ in $\bbD'$.
If $\dim (\id(\cP'))=m-1$ then $\id(\cP')=[A]_{\bbD'}$ for a linear differential polynomial $A(c_{h_1},\ldots ,c_{h_m})$ in $\bbD'$.
From the previous discussion we can conclude the following.

\begin{prop}\label{prop-idPPp}
Let $\cP'$ be a proper subsystem of $\cP$ with $\nu (\cP')<|\cP'|$. If $\dim (\id(\cP))=n-1$ then $\id(\cP)=[A]_{\bbD}$, where $A$ is a nonzero linear differential polynomial such that $\id(\cP')=[A]_{\bbD'}$.
\end{prop}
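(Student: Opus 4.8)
The plan is to reduce the statement to the fact that, under the hypothesis $\dim(\id(\cP))=n-1$, the ideal $\id(\cP)$ is a principal linear differential ideal whose generator already lives in $\bbD'$. First I would apply the discussion preceding the statement to $\cP$ itself (legitimate since $\nu(\cP)=n-1<n=|\cP|$): a characteristic set $\cA_0$ of $\id(\cP)$ consists of linear differential polynomials, $\id(\cP)=\sat(\cA_0)=[\cA_0]_{\bbD}$, and $\dim(\id(\cP))=n-|\cA_0|$. The hypothesis $\dim(\id(\cP))=n-1$ then forces $|\cA_0|=1$, so $\id(\cP)=[A]_{\bbD}$ for a single nonzero linear differential polynomial $A\in\bbD$, whose coefficients (like those of $\cP$) lie in $\cK$, so that its initial and separant are units of $\bbD$.

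Next I would record the elementary identity $\id(\cP')=\id(\cP)\cap\bbD'$: an element $f$ of $\bbD'=\cK\{C'\}$ involves only $C'$ and its derivatives, so $f(H_1(U),\ldots,H_n(U))=f(H_{h_1}(U),\ldots,H_{h_m}(U))$, and \eqref{eq-idPp} gives $f\in\id(\cP)\Leftrightarrow f\in\id(\cP')$. Since $\nu(\cP')<|\cP'|$, equation \eqref{eq-idp} yields $\id(\cP')\neq\{0\}$, and the same discussion shows $\id(\cP')=[\cA_0']_{\bbD'}$ for a characteristic set $\cA_0'$ of linear differential polynomials; being nonzero, $\id(\cP')$ contains a nonzero linear $B\in\cA_0'$.

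The heart of the argument, and the step I expect to be the main obstacle, is to show that $A\in\bbD'$. Here I would use that $B\in\id(\cP')\subseteq\id(\cP)=\sat(A)$ together with $\{A\}$ being autoreduced to get $\prem(B,A)=0$; because the initial and separant of $A$ are units of $\bbD$ and both $A$ and $B$ are linear, this reduction is exact and carried out with coefficients in $\cK$, so $B=\cD(A)$ for some nonzero $\cD\in\cK[\partial]$. If some $c_i$ occurred in $A$, say with $\ord(A,c_i)=k$, then since each application of $\partial$ raises the order in $c_i$ by one with an unchanged nonzero leading coefficient --- and no cancellation can touch the highest-order term --- one gets $\ord(\cD(A),c_i)=k+\deg(\cD)\ge 0$, so $c_i$ would occur in $B$; as $B\in\bbD'$ this forces $c_i\in C'$, hence $A\in\bbD'$. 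The delicate point is precisely ruling out that the single defining relation of $\id(\cP)$ involves a variable $c_i$ with $F_i\notin\cP'$, and this rests on the linearity of $A$ (keeping the reduction inside $\cK[\partial]$) and on the absence of top-order cancellation under $\partial$.

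Finally, once $A\in\bbD'$, I would conclude by a contraction argument: $[A]_{\bbD}$ is the $\bbD$-ideal generated by $\{\partial^kA:k\ge 0\}\subseteq\bbD'$, so $[A]_{\bbD}=[A]_{\bbD'}\cdot\bbD$, and since $\bbD=\bbD'\{C\setminus C'\}$ is a differential polynomial ring over $\bbD'$ --- hence faithfully flat over it --- one has $[A]_{\bbD}\cap\bbD'=[A]_{\bbD'}$. Combining this with the first two steps gives $\id(\cP)=[A]_{\bbD}$ and $\id(\cP')=\id(\cP)\cap\bbD'=[A]_{\bbD'}$, with $A$ the required nonzero linear differential polynomial; in particular $\dim(\id(\cP'))=|\cP'|-1$ follows a posteriori.
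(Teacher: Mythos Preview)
Your proof is correct. The paper itself gives no explicit argument for this proposition---it simply states ``From the previous discussion we can conclude the following''---so your write-up is a legitimate fleshing out of that discussion: you apply the characteristic-set facts to $\cP$ to get $\id(\cP)=[A]_{\bbD}$, use the elementary identity $\id(\cP')=\id(\cP)\cap\bbD'$ together with \eqref{eq-idp}, and then supply the one nontrivial point the paper leaves implicit, namely that $A\in\bbD'$. Your route to $A\in\bbD'$ via $B=\cD(A)$ with $\cD\in\cK[\partial]$ (using that the initial and separant of $A$ lie in $\cK^*$, so the differential reduction stays $\cK$-linear and $\prem(B,A)=0$) is exactly in the spirit of the linearity arguments the paper uses elsewhere (cf.\ \eqref{eq-linID} and \cite{R11}, Lemma~4.4), and the final contraction $[A]_{\bbD}\cap\bbD'=[A]_{\bbD'}$ is immediate since $\bbD$ is a polynomial ring over $\bbD'$.
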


Given a nonzero linear differential polynomial $B$ in $\id (\cP)$, by \cite{R11}, Lemma 4.4 there exist unique $\cF_i\in\cK[\partial]$ such that
\begin{equation}\label{eq-linID}
B=\sum_{i=1}^{n} \cF_i (c_i)\mbox{ and }\sum_{i=1}^{n} \cF_i (H_i(U))=0.
\end{equation}
We denote a greatest common left
divisor of $\cF_{1},\ldots ,\cF_{n}$ by $\gcld (\cF_1,\ldots ,\cF_n)$. We recall \cite{R11}, Definition 4.9:
\begin{enumerate}
\item The {\sf $\id$-content} of $B$ equals $\dcont (B):=\gcld (\cF_1,\ldots ,\cF_n)$.  We say that $B$ is {\sf $\id$-primitive} if $\dcont (B)\in\cK$.

\item There exist $\cL_{i}\in\cK[\partial]$ such that $\cF_{i}=\dcont (B)\cL_{i}$,
$i=1,\ldots ,n$, and $\cL_{1},\ldots ,\cL_{n}$ are coprime. An {\sf $\id$-primitive part} of $B$ equals
\[\dprim (B):=\sum_{i=1}^n \cL_i (c_i).\]
\end{enumerate}

If $B$ belongs to $(\PS)$ then $\ord(B,c_i)\leq L_i$, $i=1,\ldots ,n$.
Given a nonzero linear differential polynomial $B$ in $(\PS)$, we define the {\sf co-order with respect to $\PS$} of $B$ to be the highest positive integer $\c_{\PS} (B)$ such that $\partial^{\c_{\PS} (B)} B\in (\PS)$. Observe that, this definition was given in \cite{R11}, Definition 4.7, for a choice of $\PS$.

\begin{thm}\label{thm-dimid}
Let $\cP$ be a system of linear DPPEs as in \eqref{eq-PF}.
Let $\PS\subset\partial\cP$ and $\cU\subset\{U\}$ be sets verifying $(\ps1)$ and $(\ps2)$. If $\dim(\id(\cP))=n-1$ then $\id(\cP)=[A]_{\bbD}$, where $A$ is a linear differential polynomial verifying:
\begin{enumerate}
\item $A$ is $\id$-primitive and $A\in (\PS)\cap\bbD$.

\item $\c_{\PS}(A)=|\PS|-1-\rank(\cL(\PS,\cU))$.
\end{enumerate}
\end{thm}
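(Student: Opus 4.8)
The plan is to recover the generator from the dimension hypothesis and the theory of Section~\ref{sec-linDPPEs}, and then to locate it inside $(\PS)\cap\bbD$ and read off its co-order from $\rank(\cL(\PS,\cU))$ by a linear-algebra computation on $\cM(\PS,\cU)$. Since $\nu(\cP)=n-1<n=|\cP|$, the discussion preceding Proposition~\ref{prop-idPPp}, applied to $\cP$ itself, gives $\id(\cP)=[\cA_0]_\bbD$ for a linear characteristic set $\cA_0$ with $\dim(\id(\cP))=n-|\cA_0|$; the hypothesis forces $|\cA_0|=1$, so $\id(\cP)=[A_1]_\bbD$ for a nonzero linear differential polynomial $A_1$. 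Replacing $A_1$ by $\dprim(A_1)$ --- which again lies in $\id(\cP)$ by~\eqref{eq-linID} and generates the same prime ideal, a nonzero linear differential polynomial in $\bbD=\cK\{C\}$ generating a prime differential ideal of differential dimension $n-1$ --- I may assume $A_1$ is $\id$-primitive. I will also use that a linear $\id$-primitive generator of $\id(\cP)$ is unique up to a factor in $\cK^{*}$: the linear elements of $[A_1]_\bbD=\sat(A_1)$ are exactly the $\cK[\partial]$-images of $A_1$, so two such generators $A,A'$ satisfy $A'=\cG(A)$ and $A=\cG'(A')$ with $\cG,\cG'\in\cK[\partial]$; then $\cG'\cG$ fixes $A\neq0$, which forces $\cG'\cG=1$ (the $c_i$ are differential indeterminates over $\cK$) hence $\cG,\cG'\in\cK^{*}$, and since $\dcont(\cG(A))$ is $\cG$ times $\dcont(A)$ up to a unit, $\id$-primitivity of both pins $\cG$ to $\cK^{*}$.

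Every element of $\PS$ and every element of $\id(\cP)$ is linear homogeneous in $\cC_{\PS}\cup\cU$, so a linear differential polynomial lies in $(\sspan_{\cK}\PS)\cap\bbD$ precisely when it equals $v\cdot\mathbf{c}$ with $v$ in the left kernel of $\cL(\PS,\cU)$ and $\mathbf{c}$ the last column of $\cM(\PS,\cU)$, whose entries are the pairwise distinct differential indeterminates $c_{i,k}$, $k\in[0,L_i]\cap\bbZ$. Distinctness makes $v\mapsto v\cdot\mathbf{c}$ injective, so $W:=(\sspan_{\cK}\PS)\cap\bbD$ is a $\cK$-vector space of dimension $|\PS|-\rank(\cL(\PS,\cU))$, which is at least $1$ because $\cL(\PS,\cU)$ has only $|\PS|-1$ columns. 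In particular $W\neq\{0\}$ and $W\subseteq(\PS)\cap\bbD\subseteq\id(\cP)$.

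Next I take $A\in W$ of minimal order, write $A=\sum_i\cF_i(c_i)$ (so $\deg\cF_i\leq L_i$ for every $i$ and $A=\cG(A_1)$ for some $\cG\in\cK[\partial]$), and argue $A$ is $\id$-primitive: otherwise $\dprim(A)$ would have strictly smaller order, would lie in $\id(\cP)$ by~\eqref{eq-linID}, and its defining operators would left-divide the $\cF_i$ (hence have orders $\leq L_i$) and would still satisfy the relation with the $H_i$ (using that $\cK[\partial]$ is a domain), so $\dprim(A)\in W$, contradicting minimality. Hence $A$ is a unit multiple of $A_1$ by the uniqueness above, which proves~(1). For~(2), set $M:=\min\{L_i-\deg\cF_i\mid\cF_i\neq0\}$. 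For $0\leq j\leq M$ the operators $\partial^{j}\cF_i$ have orders $\leq L_i$ and satisfy $\sum_i(\partial^{j}\cF_i)(H_i)=\partial^{j}\sum_i\cF_i(H_i)=0$, so $\partial^{j}A\in W$; thus $\partial^{M}A\in(\PS)$, while $\partial^{M+1}A$ involves a variable $c_{i,L_i+1}\notin\cC_{\PS}$ and so cannot lie in $(\PS)$, giving $\c_{\PS}(A)=M$. Conversely any $B\in W$ equals $\cG_B(A)$ with $\deg\cG_B+\deg\cF_i\leq L_i$ whenever $\cF_i\neq0$ (compare the operators in the $c_i$-coordinate), so $\deg\cG_B\leq M$ and $B\in\sspan_{\cK}\{A,\partial A,\dots,\partial^{M}A\}$. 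Therefore $W=\sspan_{\cK}\{A,\partial A,\dots,\partial^{\c_{\PS}(A)}A\}$; these $\c_{\PS}(A)+1$ elements have strictly increasing order hence are $\cK$-independent, and comparison with $\dim_{\cK}W$ yields $\c_{\PS}(A)=|\PS|-1-\rank(\cL(\PS,\cU))$, which is~(2).

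The part that needs the most care is the co-order computation --- the identification $\c_{\PS}(A)=M$ and the claim that $W$ is spanned by $A$ and its derivatives up to order $\c_{\PS}(A)$ --- together with keeping straight the distinction between the $\cK$-span $W$ and the full algebraic ideal $(\PS)\cap\bbD$; this is exactly where the balance $|\cU|=|\PS|-1$ of $(\ps2)$ enters, forcing the supports of the $\cF_i$ against the bounds $L_i$ to leave no room for a shorter or a longer element of $W$. For the particular $\PS$ and $\cU$ of Remark~\ref{rem-psuR} this bookkeeping is the content of \cite{R11}; the argument above adapts it to arbitrary $\PS$ and $\cU$ satisfying $(\ps1)$ and $(\ps2)$.
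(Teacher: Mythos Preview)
Your argument is correct and is exactly the adaptation of \cite{R11}, Theorem~5.2 that the paper invokes: you extract the $\id$-primitive linear generator from the dimension hypothesis, identify $(\sspan_\cK\PS)\cap\bbD$ with the left kernel of $\cL(\PS,\cU)$ via the column of $c_{i,k}$'s, and then read off $\c_{\PS}(A)$ from the resulting dimension count. The one step you leave slightly implicit --- that a linear $B=\sum_i\cG_i(c_i)\in\id(\cP)$ with $\deg\cG_i\leq L_i$ already lies in $\sspan_\cK\PS$, which you need to conclude $\dprim(A)\in W$ and later $\partial^j A\in W$ --- follows immediately from writing $B=\sum_i\cG_i(F_i)$ and using $(\ps1)$.
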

\begin{proof}
We can adapt the proof of \cite{R11}, Theorem 5.2.
\end{proof}

We can also adapt the proof of \cite{RS}, Theorem 10 (1)$\Leftrightarrow$(3) to show that
\begin{equation}\label{eq-Lequiv}
\det(\cM(\PS,\cU))\neq 0\Leftrightarrow\rank(\cL(\PS,\cU))=|\PS|-1.
\end{equation}

As observed in Section \ref{sec-diff res formulas}, if $A=\det(\cM(\PS,\cU))\neq 0$ then $A$ is an element of the differential elimination ideal $\id(\cP)\cap\bbD$.
The next examples illustrate this statement in the case of the formula $\dfres(\cP)$.

\begin{exs}\label{ex-dfres}
Let $\cK=\bbQ(t)$ and $\partial=\frac{\partial}{\partial t}$.
\begin{enumerate}
\item Let us consider the system $\cP=\{F_1,F_2,F_3,F_4\}$ in $\bbD\{u_1,u_2,u_3\}$ with $\bbD=\cK\{c_1,c_2,c_3,c_4\}$ and
\begin{align*}
&F_1=c_1+5 u_{1,2}+3 u_2+u_3,\\
&F_2=c_2+u_1+u_3,\\
&F_3=c_3+u_{1,2}+u_2+u_3,\\
&F_4=c_4+u_1+u_{2,1}+u_{3,2}.
\end{align*}
Observe that $N=6$ and $\gamma(\cP)=\overline{\gamma}_2(\cP)=1$.
Thus $\dfres(\cP)$ is the determinant of the coefficient matrix of
\[\ps(\cP)=\{\partial^{3}F_1,\ldots ,F_1,\partial^5 F_2 ,\ldots ,F_2,\partial^3 F_3 ,\ldots ,F_3,\partial^3 F_4,\ldots ,F_4\}.\]
Namely
\begin{align*}
A=&\dfres(\cP)=\cL_1 (c_1)+\cL_2(c_2)+\cL_3(c_3)+\cL_4(c_4)\neq 0,\\
A=&128 c_4+192 c_3+64 \partial c_3-64 \partial c_1+128 \partial^2c_4-128 \partial^4c_2+64 \partial^2c_1\\
&-320 \partial^3c_3+64 \partial^3c_1 +256 \partial^3c_2-192 \partial^2c_3-64 c_1-128 c_2,
\end{align*}
where
\begin{align*}
&\cL_1=64 (\delta-1) (\delta+1)^2,\\
&\cL_2=-128 (\delta-1) (\delta^3-\delta^2-\delta-1),\\
&\cL_3=192+64 \delta-320 \delta^3-192 \delta^2,\\
&\cL_4=128+128 \delta^2.
\end{align*}
Thus $\cL_1,\cL_2,\cL_3$ and $\cL_4$ are coprime and $A$ is an $\id$-primitive linear polynomial in $\id(\cP)\cap\bbD$.

\item Let $x$ be a differential indeterminate over $\cK$. Set $\bbD=\cK\{x\}$ and specialize $c_1=x$, $c_2=c_3=c_4=0$ in the previous system $\cP$ to obtain $\cP^e=\{f_1,f_2,f_3,f_4\}$, which is not a system of DPPEs any more. Still $\dfres(\cP^e)=64 \partial^2x+64\partial^3x-64\partial x-64 x$, the specialization of $\dfres(\cP)$, is an element of the differential elimination ideal $[\cP^e]_{\bbD\{u_1,u_2,u_3\}}\cap\bbD$.

\item If we replace $F_1$ in $\cP$ by $c_1+ u_{1,2}+3 u_2+u_3$ then $\dfres(\cP)=0$.
\end{enumerate}
\end{exs}

If $\dfres(\cP)=0$, as in Example \ref{ex-dfres}(3), then this formula cannot be used to obtain an element of the differential elimination ideal.
The perturbation methods in \cite{R11}, Section 6 are next extended to achieve differential elimination via differential resultant formulas, even in the $\dfres(\cP)=0$ case. Although differential elimination can be always achieved via perturbations of $\dcres(\cP)$, as explained in \cite{R11}, it is worth to have similar methods available for $\dfres(\cP)$ (and other possible formulas) since $\dfres(\cP)$ is the determinant of a matrix of smaller size than the matrix used to compute $\dcres(\cP)$ in many cases ( see Example \ref{ex-dfresE}).

\para

Let $p$ be an algebraic indeterminate over $\cK$, thus $\partial (p)=0$. Denote
$\cK_p=\cK \langle p\rangle$ the differential field extension of
$\cK$ by $p$. A {\sf linear perturbation of the system } $\cP$ is a new system
\begin{equation*}\label{pDPPEs}
\cP_{\varepsilon}=\{F_i^{\varepsilon}:=F_i-p\varepsilon_i(U)\mid i=1,\ldots ,n\},
\end{equation*}
where the {\sf linear perturbation} $\varepsilon=(\varepsilon_1(U),\ldots ,\varepsilon_n(U))$ is a family of linear differential polynomials in $\cK\{U\}$.
The rest of this section is dedicated to prove that, if $\cP$ is super essential then there exists a linear perturbation $\varepsilon$ such that $\dfres(\cP_{\varepsilon})\neq 0$. For this purpose, it is shown how the proof of \cite{R11}, Theorem 6.2 applies to a more general situation than the one in \cite{R11}.

\para

Let us consider $\overline{\omega}:=(\omega_1,\ldots ,\omega_n)\in\bbN_0^n$ and $\overline{\beta}:=(\beta_1,\ldots, \beta_{n-1})\in\bbN_0^{n-1}$ verifying:
\begin{enumerate}
\item[($\beta$1)] If $\Omega:=\sum_{i=1}^n\omega_i$ and $\beta:=\sum_{j=1}^{n-1}\beta_j$ then $\Omega-\omega_i-\beta\geq 0$, $i=1,\ldots ,n$.

\item[($\beta$2)] If $\cL_{i,j}\neq 0$ then $0\leq \deg(\cL_{i,j})\leq \omega_i-\beta_j$.
\end{enumerate}
Taking assumption ($\cP$4) into consideration, for every $i$ there exists $j$ such that by $(\beta2)$
\[0\leq \Omega-\omega_i-\beta\leq \Omega-\beta_j-\beta.\]
So it can be easily verified that the next sets satisfy $(\ps1)$ and $(\ps2)$:
\begin{align*}
&\ps^{\beta,\Omega}(\cP):=\{\partial^k F_i\mid k\in [0,\Omega-\omega_i-\beta]\cap\bbZ, i=1,\ldots ,n\},\\
&\cU^{\beta,\Omega}:=\{u_{j,k}\mid k\in [0,\Omega-\beta_j-\beta]\cap\bbZ,j=1,\ldots ,n-1\}.
\end{align*}
Thus, we can define the matrix $\cM(\ps^{\beta,\Omega}(\cP),\cU^{\beta,\Omega})$, as in Section \ref{sec-diff res formulas}.

Given a nonzero differential operator  $\cL=\sum_{k\in\bbN_0}a_k\partial^k\in\bbD [\partial]$,
let $\sigma_k (\cL):=a_k$.
Let us define the {\sf $(\overline{\beta},\overline{\omega})$-symbol matrix} $\sigma_{(\overline{\beta},\overline{\omega})} (\cP)=(\sigma_{i,j})$ of $\cP$ as the $n\times (n-1)$ matrix with
\begin{equation}\label{eq-sigmaP}
\sigma_{i,j}:=\left\{
\begin{array}{ll}
\sigma_{\omega_i-\beta_j}(\cL_{i,j}), & \cL_{i,j}\neq 0,\\
0, & \cL_{i,j}= 0.
\end{array}
\right.
\end{equation}
If $\Omega\geq 1$, we can consider the coefficient matrix $\cM(\ps_h^{\beta,\Omega}(\cP))$ of
\[\ps_h^{\beta,\Omega}(\cP):=\{\partial^k H_i\mid k\in [0,\Omega-\omega_i-\beta-1]\cap\bbZ, i\in\{1,\ldots ,n\}, \Omega-\omega_i-\beta-1\geq 0\},\]
i.e. the submatrix of $\cM(\ps^{\beta,\Omega}(\cP),\cU^{\beta,\Omega})$ obtained by removing the columns indexed by $1$ and $u_{j,\Omega-\beta_j-\beta}$, $j=1,\ldots, n-1$ and the rows corresponding to the coefficients of $\partial^{\Omega-\omega_i-\beta}F_i$, $i=1,\dots ,n$.

\begin{rem}\label{rem-hatgamma}
Let $\hat{\gamma}_j$ be as in Remark \ref{rem-psuR}.
In \cite{RS} and \cite{R11}, Section 3, $\beta_j=\hat{\gamma}_j$, $j=1,\ldots ,n-1$, $\omega_i=o_i$, $i=1,\ldots ,n$ and, $\overline{\beta}\in\bbN_0^{n-1}$ and $\overline{\omega}\in\bbN_0^n$ verify ($\beta$1) and ($\beta$2).
In fact,
\begin{align*}
&\dcres (\cP)=\det(\cM(\ps^{\beta,\Omega}(\cP),\cU^{\beta,\Omega}))\mbox{ and }\\
&\dcres^h(H_1,\ldots H_n)=\det(\cM(\ps_h^{\beta,\Omega}(\cP))).
\end{align*}
\end{rem}

Using the matrix $\sigma_{(\overline{\beta},\overline{\omega})} (\cP)$, we can adapt the proof of \cite{RS}, Theorem 10 (1)$\Leftrightarrow$(2) to show that
\begin{equation}\label{eq-equiv2}
\det(\cM(\ps^{\beta,\Omega}(\cP),\cU^{\beta,\Omega}))\neq 0\Leftrightarrow\det(\cM(\ps_h^{\beta,\Omega}(\cP)))\neq 0.
\end{equation}

Let us assume, in addition, that $\overline{\omega}\in\bbN_0^n$ and $\overline{\beta}\in\bbN_0^{n-1}$ verify
\begin{enumerate}
\item[($\beta$3)] $\omega_i-\beta_{n-i}\geq 0$, $i=1,\ldots ,n-1$,
\end{enumerate}
to define the linear perturbation $\phi=(\phi_1(U),\ldots ,\phi_n(U))$ by
\begin{equation}\label{eq-phi}
\phi_i (U)=\left\{\begin{array}{ll}
u_{n-1,\omega_1-\beta_{n-1}},& i=1,\\
u_{n-i,\omega_i-\beta_{n-i}}+ u_{n-i+1},& i=2,\ldots ,n-1,\\
u_{1},& i=n.
\end{array}\right.
\end{equation}
Let $\sigma^n_{(\overline{\beta},\overline{\omega})}(\cP)$ be the $(n-1)\times (n-1)$ matrix obtained by removing the $n$th row of $\sigma_{(\overline{\beta},\overline{\omega})}(\cP)$.

\begin{lem}\label{lem-phi}
Let us consider $\overline{\omega}\in\bbN_0^n$ and $\overline{\beta}\in\bbN_0^{n-1}$ verifying ($\beta$1), ($\beta$2), ($\beta$3) and $\omega_n\geq \omega_{n-1}\geq \cdots \geq \omega_1$.
Given the linear perturbation $\phi$ defined by \eqref{eq-phi} it holds that $\det(\sigma^n_{(\overline{\beta},\overline{\omega})}(\cP))\neq 0$ and
$\det(\cM(\ps_h^{\beta,\Omega}(\cP_{\phi})))\neq 0$.
\end{lem}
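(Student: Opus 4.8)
The plan is to prove the two nonvanishing statements separately and then connect them, since the second one will reduce to the first through the equivalence \eqref{eq-equiv2} applied to the perturbed system $\cP_\phi$.

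First I would analyze $\det(\sigma^n_{(\overline{\beta},\overline{\omega})}(\cP))$. The matrix $\sigma^n_{(\overline{\beta},\overline{\omega})}(\cP)$ is $(n-1)\times(n-1)$, obtained from $\sigma_{(\overline{\beta},\overline{\omega})}(\cP)$ by deleting the $n$th row; by \eqref{eq-sigmaP} its $(i,j)$ entry is $\sigma_{\omega_i-\beta_j}(\cL_{i,j})$ when $\cL_{i,j}\neq 0$ and $0$ otherwise, for $i=1,\dots,n-1$. Condition ($\beta$2) guarantees that whenever $\cL_{i,j}\neq 0$ we have $0\le\deg(\cL_{i,j})\le\omega_i-\beta_j$, but to get a nonzero symbol entry I want the top coefficient to be hit, i.e. $\deg(\cL_{i,j})=\omega_i-\beta_j$. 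The key observation is that ($\beta$3) together with the ordering $\omega_n\ge\cdots\ge\omega_1$ is exactly what is needed to realize an antidiagonal pattern: for $i=1,\dots,n-1$ I would aim to show $\cL_{i,n-i}\neq 0$ and $\deg(\cL_{i,n-i})=\omega_i-\beta_{n-i}$, so that the product of the antidiagonal entries $\prod_{i=1}^{n-1}\sigma_{\omega_i-\beta_{n-i}}(\cL_{i,n-i})$ is a nonzero summand of the permanent-like expansion, and — because entries are either genuine leading coefficients or zero, with no cancellation possible among distinct coefficient expressions in the fixed differential operators — $\det(\sigma^n_{(\overline{\beta},\overline{\omega})}(\cP))\neq 0$. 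This is the step I expect to be the main obstacle: one must argue that the hypotheses on $\overline{\omega},\overline{\beta}$ (in particular ($\beta$3) and the monotonicity of $\overline{\omega}$), combined with the super essential / differentially essential structure of $\cP$ encoded by the bijections $\mu_i$ of \eqref{eq-mutau}–\eqref{eq-mui}, force the relevant $\cL_{i,n-i}$ to be nonzero and of the extremal degree $\omega_i-\beta_{n-i}$; here one presumably invokes how $\overline{\omega},\overline{\beta}$ were chosen relative to $o_i$ and $\overline{\gamma}_j(\cP)$ (cf. Remark \ref{rem-hatgamma}), so that $\omega_i-\beta_j$ coincides with the actual degree bound $o_i-\overline{\gamma}_j(\cP)$ along the antidiagonal, by \eqref{eq-oigammaj}. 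I would mirror the degree-bookkeeping of the proof of Theorem \ref{thm-Mn0}.

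Next I would handle $\det(\cM(\ps_h^{\beta,\Omega}(\cP_\phi)))\neq 0$. By the equivalence \eqref{eq-equiv2} applied to $\cP_\phi$, this is equivalent to $\det(\cM(\ps^{\beta,\Omega}(\cP_\phi),\cU^{\beta,\Omega}))\neq 0$, and by \eqref{eq-Lequiv} to $\rank(\cL(\ps^{\beta,\Omega}(\cP_\phi),\cU^{\beta,\Omega}))=|\ps^{\beta,\Omega}(\cP_\phi)|-1$, i.e. full column rank of the homogeneous coefficient matrix. Following the strategy of \cite{R11}, Theorem 6.2, I would set up the coefficient matrix of $\ps_h^{\beta,\Omega}(\cP_\phi)=\{\partial^k(H_i+p\,\phi_i(U))\}$ and exhibit a nonvanishing maximal minor. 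The perturbation $\phi$ in \eqref{eq-phi} is designed so that the highest-order perturbation terms $p\,u_{n-i,\omega_i-\beta_{n-i}}$ populate, in the highest block of derivatives, precisely the columns indexed by the variables $u_{n-i,\,\Omega-\beta_{n-i}-\beta}$ (after taking the top derivative $\partial^{\Omega-\omega_i-\beta-1}$ of $\phi_i$), while the "$+u_{n-i+1}$" tails create a staggered, invertible triangular coupling among the perturbation blocks. The determinant of the leading perturbation block is governed exactly by $\det(\sigma^n_{(\overline{\beta},\overline{\omega})}(\cP))$ — more precisely, specializing $p\to\infty$ (or extracting the top-degree-in-$p$ part), the minor factors, up to a nonzero scalar and a power of $p$, as a product of $\det(\sigma^n_{(\overline{\beta},\overline{\omega})}(\cP))$ with determinants of identity/triangular blocks coming from the $u_{n-i+1}$ terms. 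Since that factor is nonzero by the first part, the minor is a nonzero polynomial in $p$, hence does not vanish identically; as $p$ is an indeterminate over $\cK$, the determinant is nonzero in $\cK_p$. I would make this precise by choosing the maximal minor of $\cM(\ps_h^{\beta,\Omega}(\cP_\phi))$ whose leading term in $p$ is this product, exactly as in \cite{R11}, Theorem 6.2, only now with the more general weights $\overline{\omega},\overline{\beta}$ in place of $(o_i,\hat\gamma_j)$; one checks that hypotheses ($\beta$1)–($\beta$3) and $\omega_n\ge\cdots\ge\omega_1$ are the precise conditions under which that argument goes through verbatim.

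Finally I would remark that applying the lemma with the particular choice $\omega_i$ and $\beta_j$ corresponding to $o_i$ and $\underline{\gamma}_j(\cP)$ (so that $\Omega=N$, $\beta=\gamma(\cP)$ — after checking ($\beta$1)–($\beta$3), the first via Lemma \ref{lem-exdFres}) yields a linear perturbation $\cP_\varepsilon$ with $\dfres(\cP_\varepsilon)\neq 0$, which is the intended application announced before the lemma; the bulk of the work, though, is the two nonvanishing claims above, and within those the crux is the antidiagonal nonvanishing of the symbol submatrix, which rests on matching $\omega_i-\beta_j$ with the true extremal degrees of the $\cL_{i,j}$ along a carefully chosen permutation.
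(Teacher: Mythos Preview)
There is a genuine gap in your plan, and it stems from taking the first claim literally for the \emph{unperturbed} system $\cP$. As stated (for $\cP$ rather than $\cP_\phi$), that claim is false: take $n=3$, $\cP=\{c_1+u_1,\,c_2+u_1,\,c_3+u_2\}$ with $\omega_i=\beta_j=0$; then ($\beta$1)--($\beta$3) and the ordering all hold, yet $\sigma^3_{(\overline{\beta},\overline{\omega})}(\cP)=\begin{pmatrix}1&0\\1&0\end{pmatrix}$ has zero determinant. So the symbol statement must be read for the \emph{perturbed} system $\cP_\phi$ (this is what \cite{R11}, Proposition~6.1 proves and what the paper's proof cites). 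Your attempt to force $\cL_{i,n-i}\neq 0$ and $\deg(\cL_{i,n-i})=\omega_i-\beta_{n-i}$ cannot succeed: the lemma assumes neither super essential nor differentially essential, so the bijections $\mu_i$ from \eqref{eq-mui} that you invoke are simply not available, and nothing in ($\beta$1)--($\beta$3) prevents $\cL_{i,n-i}=0$.

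Once the claim is read for $\cP_\phi$, the mechanism is the opposite of what you describe. The antidiagonal entries of $\sigma^n_{(\overline{\beta},\overline{\omega})}(\cP_\phi)$ are $\sigma_{\omega_i-\beta_{n-i}}(\cL_{i,n-i})-p$ (coming from the top monomial $u_{n-i,\omega_i-\beta_{n-i}}$ of $\phi_i$, which exists precisely by ($\beta$3)), hence the determinant has leading term $\pm p^{n-1}\neq 0$ over $\cK_p$. Likewise, for the second claim your idea of extracting the top-degree-in-$p$ part of $\det(\cM(\ps_h^{\beta,\Omega}(\cP_\phi)))$ is the right move, but that leading coefficient is determined entirely by the perturbation $\phi$---it is the determinant of the coefficient matrix of the $\phi_i$'s themselves, not $\det(\sigma^n_{(\overline{\beta},\overline{\omega})}(\cP))$. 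The staircase shape of $\phi$ (top term $u_{n-i,\omega_i-\beta_{n-i}}$, tail $u_{n-i+1}$) and the ordering $\omega_n\ge\cdots\ge\omega_1$ are exactly what make that $\phi$-matrix triangular with nonzero diagonal; this is the content of \cite{R11}, Proposition~6.1, and the paper's proof simply observes that the argument there uses only ($\beta$1)--($\beta$3) and the ordering, not the specific values $\omega_i=o_i$, $\beta_j=\hat\gamma_j$. Your use of \eqref{eq-equiv2} to ``reduce the second to the first'' is also misplaced: \eqref{eq-equiv2} relates the full matrix to the homogeneous submatrix $\cM(\ps_h^{\beta,\Omega})$, not to the symbol matrix.
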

\begin{proof}
The proof of \cite{R11}, Proposition 6.1 holds under assumptions ($\beta$1), ($\beta$2), ($\beta$3) and $\omega_n\geq \omega_{n-1}\geq \cdots \geq \omega_1$, it does not make use of the precise definition of $\beta_j$ or $\omega_i$ in \cite{R11} (see Remark \ref{rem-hatgamma}). Thus we can adapt the proof of \cite{R11}, Proposition 6.1(2) to conclude that $\det(\cM(\ps_h^{\beta,\Omega}(\cP_{\phi})))\neq 0$.
\end{proof}

From \eqref{eq-equiv2} and Lemma \ref{lem-phi} the next result follows. Observe that \cite{R11}, Theorem 6.2 coincides with the next proposition for $\overline{\beta}$ and $\overline{\omega}$ as in Remark \ref{rem-hatgamma}.

\begin{prop}
Let us consider $\overline{\omega}\in\bbN_0^n$ and $\overline{\beta}\in\bbN_0^{n-1}$ verifying ($\beta$1), ($\beta$2), ($\beta$3) and $\omega_n\geq \omega_{n-1}\geq \cdots \geq \omega_1$. Given a linear system $\cP$ as in \eqref{eq-PF}, there exists a linear perturbation $\phi$ such that
\[\det(\cM(\ps^{\beta,\Omega}(\cP_{\phi}),\cU^{\beta,\Omega}))\neq 0.\]
\end{prop}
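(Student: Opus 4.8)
The plan is to obtain the statement as a short corollary of Lemma~\ref{lem-phi} together with the equivalence \eqref{eq-equiv2}, the latter applied not to $\cP$ but to the perturbed system $\cP_\phi$. First I would take $\phi=(\phi_1(U),\ldots,\phi_n(U))$ to be exactly the linear perturbation exhibited in \eqref{eq-phi}; this is available precisely because the hypotheses ($\beta$1), ($\beta$2), ($\beta$3) and $\omega_n\geq\omega_{n-1}\geq\cdots\geq\omega_1$ are assumed, which is what \eqref{eq-phi} and Lemma~\ref{lem-phi} require. By Lemma~\ref{lem-phi} this choice already gives $\det(\cM(\ps_h^{\beta,\Omega}(\cP_\phi)))\neq 0$, together with the auxiliary fact $\det(\sigma^n_{(\overline{\beta},\overline{\omega})}(\cP))\neq 0$ produced along the way.

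The one thing that needs checking is that $\cP_\phi$ still fits the framework of this section with the \emph{same} pair $(\overline{\beta},\overline{\omega})$, so that $\ps^{\beta,\Omega}(\cP_\phi)$, $\ps_h^{\beta,\Omega}(\cP_\phi)$ and $\cU^{\beta,\Omega}$ are defined and \eqref{eq-equiv2} may be invoked for $\cP_\phi$. Condition ($\beta$1) does not involve $\cP$ at all; ($\cP$4) is preserved because $\phi$ only adds terms, and in fact the monomials occurring in $\phi$ already involve every $u_j$; and ($\beta$2) is preserved because each monomial appearing in $\phi_i$ has order at most $\omega_i-\beta_j$ in the variable $u_j$ it involves. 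Indeed the summand $u_{n-i,\omega_i-\beta_{n-i}}$ contributes order exactly $\omega_i-\beta_{n-i}$, which is admissible since $\omega_i-\beta_{n-i}\geq 0$ by ($\beta$3); while each summand $u_{n-i+1}$, and the summand $u_1$ in $\phi_n$, contributes order $0$, which is admissible because $\omega_i-\beta_{n-i+1}\geq\omega_{i-1}-\beta_{n-i+1}\geq 0$ for $2\leq i\leq n-1$ and $\omega_n-\beta_1\geq\omega_{n-1}-\beta_1\geq 0$, using ($\beta$3) together with $\omega_n\geq\omega_{n-1}\geq\cdots\geq\omega_1$. Since $p$ is transcendental over $\cK$ there is no top-degree cancellation in the perturbed operators, so their orders are either unchanged or bumped up to exactly the allowed bound, and the derivation of \eqref{eq-equiv2}, adapted from \cite{RS}, Theorem~10 (1)$\Leftrightarrow$(2), goes through verbatim with $\cP$ replaced by $\cP_\phi$.

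Putting the pieces together: by Lemma~\ref{lem-phi} we have $\det(\cM(\ps_h^{\beta,\Omega}(\cP_\phi)))\neq 0$, and by \eqref{eq-equiv2} applied to $\cP_\phi$ this is equivalent to $\det(\cM(\ps^{\beta,\Omega}(\cP_\phi),\cU^{\beta,\Omega}))\neq 0$, which is the assertion. The only genuine work is the admissibility bookkeeping of the middle paragraph, namely confirming that the particular perturbation $\phi$ respects $(\overline{\beta},\overline{\omega})$ so that $\cP_\phi$ lives in the same formula framework and \eqref{eq-equiv2} applies to it; the substantive nonvanishing has already been isolated in Lemma~\ref{lem-phi}, whose proof in turn rests on $\det(\sigma^n_{(\overline{\beta},\overline{\omega})}(\cP))\neq 0$ and the structure imposed by \eqref{eq-phi}.
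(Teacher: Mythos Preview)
Your proof is correct and follows exactly the same approach as the paper, which derives the proposition in one line from Lemma~\ref{lem-phi} together with the equivalence~\eqref{eq-equiv2}. You have supplied the admissibility bookkeeping (that $\cP_\phi$ still satisfies ($\beta$2) and the other hypotheses so that \eqref{eq-equiv2} applies to it) which the paper leaves implicit; this is a welcome clarification but not a different argument.
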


We denote $\gamma_j(\cP)$, $\overline{\gamma}_j(\cP)$ and $\underline{\gamma}_j(\cP)$ simply by $\gamma_j$, $\overline{\gamma}_j$ and $\underline{\gamma}_j$ in the remaining parts of this section.

\begin{rem}\label{rem-cQ}
There exists a system $\cQ=\{G_i\mid i=1,\ldots ,n\}$ in $\bbD\{U\}$
such that
\[G_i(u_{1,\underline{\gamma}_1},\ldots ,u_{n-1,\underline{\gamma}_{n-1}})=F_i(u_1,\ldots ,u_{n-1}).\]
Observe that $\ord(G_i)\leq o_i$, $i=1,\ldots ,n$ and $\gamma_j (\cQ)$ may be different from $\gamma_j$.
Let $\overline{\omega}=(o_1,\ldots ,o_n)$, $\overline{\beta}=(\gamma_1,\ldots,\gamma_{n-1})$ and observe that
$\cM(\ps^{\beta,\Omega}(\cQ),\cU^{\beta,\Omega})$ may be different from $\cM(\cQ)$ (as in Section \ref{sec-diff res formulas})
but $\cM(\ps^{\beta,\Omega}(\cQ),\cU^{\beta,\Omega})$
is obtained by reorganizing the columns of $\cM(\cP)$, thus
\[\dfres (\cP)=\pm \det (\cM(\ps^{\beta,\Omega}(\cQ),\cU^{\beta,\Omega})).\]
\end{rem}

We may assume w.l.o.g. that $o_n\geq o_{n-1}\geq \cdots \geq o_1$ and otherwise rename the polynomials in $\cP$.
Let us assume that $\cP$ is super essential and let
$\mu:=\mu_n$ be as in \eqref{eq-mui}, thus
\[o_i-\overline{\gamma}_{\mu(i)}\geq 0, i=1,\ldots ,n-1.\]
We define the linear perturbation $\varepsilon=(\varepsilon_1(U),\ldots ,\varepsilon_n(U))$ by
\begin{equation}\label{eq-varepsilon}
\varepsilon_i (U)=\left\{\begin{array}{ll}
u_{\mu(1),o_1-\overline{\gamma}_{\mu(1)}},& i=1,\\
u_{\mu(i),o_i-\overline{\gamma}_{\mu(i)}}+ u_{\mu(i-1),\underline{\gamma}_{\mu(i-1)}},& i=2,\ldots ,n-1,\\
u_{\mu(n-1),\underline{\gamma}_{\mu(n-1)}},& i=n.
\end{array}\right.
\end{equation}
We use the perturbation given by \eqref{eq-varepsilon} to prove the next result, but there are other perturbations that may serve the same purpose.

\begin{thm}\label{th-dres(Fphi)}
Given a super essential system $\cP$ as in \eqref{eq-PF}, there exists a linear perturbation $\varepsilon$ such that
the differential resultant $\dfres (\cP_{\varepsilon})$ is a nonzero polynomial in $\cK [p]\{C\}$.
\end{thm}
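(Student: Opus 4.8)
The plan is to reduce the statement to the non-vanishing of a determinant built from the symbol matrix, and then use Lemma~\ref{lem-phi} with a suitable choice of $\overline{\omega}$ and $\overline{\beta}$. First I would set $\overline{\omega}=(o_1,\ldots,o_n)$ and $\overline{\beta}=(\gamma_1,\ldots,\gamma_{n-1})$ as in Remark~\ref{rem-cQ}, after renaming the polynomials in $\cP$ so that $o_n\geq o_{n-1}\geq\cdots\geq o_1$. With this choice, $(\beta1)$ holds by Lemma~\ref{lem-exdFres} (since $\cP$ is super essential, $N-o_i-\gamma(\cP)\geq 0$), and $(\beta2)$ holds by \eqref{eq-oigammaj}. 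Next I would verify $(\beta3)$: for the bijection $\mu=\mu_n$ of \eqref{eq-mui} we have $\cL_{i,\mu(i)}\neq 0$ for $i=1,\ldots,n-1$, hence $o_i-\overline{\gamma}_{\mu(i)}(\cP)\geq\deg(\cL_{i,\mu(i)})\geq 0$. The subtlety is that Lemma~\ref{lem-phi} is stated for $\beta_{n-i}$ (the ``standard'' ordering of parameters), whereas the super essential structure naturally involves the permuted index $\mu(i)$; this is exactly why the perturbation $\varepsilon$ of \eqref{eq-varepsilon} is written with $u_{\mu(i),\ldots}$ rather than $u_{n-i,\ldots}$. So I would first pass to the system $\cQ$ of Remark~\ref{rem-cQ}, relabel the differential indeterminates by the permutation $\mu$, and observe that under this relabelling the roles of $\underline{\gamma}_{\mu(i)}$ and $o_i-\overline{\gamma}_{\mu(i)}$ become precisely the $\beta_{n-i}$ and $\omega_i-\beta_{n-i}$ appearing in \eqref{eq-phi}, so that $\varepsilon$ becomes $\phi$.

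Once the dictionary is set up, the argument runs as follows. By Remark~\ref{rem-cQ}, $\dfres(\cP_\varepsilon)=\pm\det(\cM(\ps^{\beta,\Omega}(\cQ_\phi),\cU^{\beta,\Omega}))$ up to reorganizing columns, where $\cQ_\phi$ is the perturbed system corresponding to $\cP_\varepsilon$. By \eqref{eq-equiv2}, this determinant is nonzero if and only if $\det(\cM(\ps_h^{\beta,\Omega}(\cQ_\phi)))\neq 0$. By Lemma~\ref{lem-phi} applied to $\cQ$ with $\overline{\omega}=(o_1,\ldots,o_n)$, $\overline{\beta}=(\gamma_1,\ldots,\gamma_{n-1})$ (the hypotheses $(\beta1),(\beta2),(\beta3)$ and $o_n\geq\cdots\geq o_1$ having just been checked), we get $\det(\cM(\ps_h^{\beta,\Omega}(\cQ_\phi)))\neq 0$. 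Hence $\dfres(\cP_\varepsilon)\neq 0$. Since every entry of $\cM(\cP_\varepsilon)$ lies in $\cK[p]\{C\}$ (the perturbation only introduces $p$ linearly, $\partial(p)=0$, and derivatives of the $F_i^\varepsilon$ keep coefficients in $\cK[p]\{C\}$), the determinant is a polynomial in $\cK[p]\{C\}$, which gives the statement.

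The main obstacle, as I see it, is not any deep new idea but the bookkeeping needed to align the super essential data $(\mu,\overline{\gamma}_j,\underline{\gamma}_j)$ with the hypotheses of Lemma~\ref{lem-phi}: one must be careful that the permutation $\mu$ genuinely converts the $\varepsilon$ of \eqref{eq-varepsilon} into the $\phi$ of \eqref{eq-phi} \emph{and} that $(\beta3)$ — which in \eqref{eq-phi} reads $\omega_i-\beta_{n-i}\geq 0$ — corresponds under the relabelling exactly to $o_i-\overline{\gamma}_{\mu(i)}\geq 0$, the inequality guaranteed by super essentiality. A secondary point worth checking explicitly is that the $(\overline{\beta},\overline{\omega})$-symbol matrix of the perturbed system $\cQ_\phi$ has its bottom $(n-1)\times(n-1)$ block nonsingular; this is the content of the ``$\det(\sigma^n_{(\overline{\beta},\overline{\omega})}(\cP))\neq 0$'' part of Lemma~\ref{lem-phi}, and it is where the precise form of $\phi$ (a single highest-order perturbation variable per equation, chained by low-order terms) is used. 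Beyond that, the computation that the perturbed coefficient matrix retains entries in $\cK[p]\{C\}$ is routine and I would not belabor it.
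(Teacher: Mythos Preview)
Your proposal is correct and follows essentially the same route as the paper: pass to the shifted system $\cQ$ of Remark~\ref{rem-cQ}, relabel the indeterminates in $U$ so that $\mu_n$ becomes $i\mapsto n-i$, identify the perturbation $\varepsilon$ of \eqref{eq-varepsilon} with the $\phi$ of \eqref{eq-phi} (the paper does this through the intermediate perturbation $\psi$ of \eqref{eq-phi2}), and then conclude via Lemma~\ref{lem-phi} together with \eqref{eq-equiv2}. One small correction to your bookkeeping: with $\overline{\beta}=(\gamma_1,\ldots,\gamma_{n-1})$, condition $(\beta3)$ after relabelling reads $o_i-\gamma_{\mu(i)}\geq 0$ rather than $o_i-\overline{\gamma}_{\mu(i)}\geq 0$, but this still follows from $\cL_{i,\mu(i)}\neq 0$ and \eqref{eq-oigammaj}, since $\underline{\gamma}_{\mu(i)}\leq\deg(\cL_{i,\mu(i)})\leq o_i-\overline{\gamma}_{\mu(i)}$.
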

\begin{proof}
Let $\varepsilon$ be the linear perturbation defined by \eqref{eq-varepsilon}.
Let $\cQ=\{G_i\mid i=1,\ldots ,n\}$ be as in Remark \ref{rem-cQ}. If $\psi=(\psi_1(U),\ldots ,\psi_n(U))$ is the linear perturbation defined by
\begin{equation}\label{eq-phi2}
\psi_i (U)=\left\{\begin{array}{ll}
u_{\mu(1),o_1-\gamma_{\mu(1)}},& i=1,\\
u_{\mu(i),o_i-\gamma_{\mu(i)}}+ u_{\mu(i-1)},& i=2,\ldots ,n-1,\\
 u_{\mu(n-1)},& i=n
\end{array}\right.
\end{equation}
then $G_i^{\psi}(u_{1,\underline{\gamma}_1},\ldots ,u_{n-1,\underline{\gamma}_{n-1}})=F_i^{\varepsilon}(u_1,\ldots ,u_{n-1})$.
For $\overline{\beta}=(\gamma_1,\ldots ,\gamma_{n-1})$ and $\overline{\omega}=(o_1,\ldots ,o_n)$, by Remark \ref{rem-cQ}
\[\dfres (\cP_{\varepsilon})=\pm \det (\cM(\ps^{\beta,\Omega}(\cQ_{\psi}),\cU^{\beta,\Omega})).\]
We can assume w.l.o.g. that $o_n\geq o_{n-1}\geq \cdots \geq o_1$ and that $\mu=(n-1,\ldots ,1)$ (otherwise rename the elements in $U$), thus $\psi=\phi$, the perturbation given by \eqref{eq-phi}. By Lemma \ref{lem-phi} and \eqref{eq-equiv2} $\det(\cM(\ps^{\beta,\Omega}(\cQ_{\phi}),\cU^{\beta,\Omega}))\neq 0$ and the result is proved.
\end{proof}

By Theorem \ref{th-dres(Fphi)}, if $\cP$ is super essential then there exists a linear perturbation $\varepsilon$ such that $\dfres (\cP_{\varepsilon})$ is a nonzero differential polynomial in $\cK[p]\{C\}$.
Let $D_{\varepsilon}$ be the lowest degree of $\dfres (\cP_{\varepsilon})$ in $p$ and $A_{D_{\varepsilon}}$ the coefficient of $p^{D_{\varepsilon}}$ in $\dfres (\cP_{\varepsilon})$. It can be proved as in \cite{R11}, Lemma 6.4 that $A_{D_{\varepsilon}}\in (\ps(\cP))\cap\bbD$. Let $A_{\varepsilon}(\cP)$ be an $\id$-primitive part of $A_{D_{\varepsilon}}$. Thus the polynomials $A_{D_{\varepsilon}}$ and $A_{\varepsilon}(\cP)$ can both be used for differential elimination of the variables $U$ from the system $\cP$.

\begin{lem}\label{lem-AE}
$A_{\varepsilon}(\cP)$ is a nonzero linear $\id$-primitive differential polynomial in  $(\ps(\cP))\cap\bbD$.
\end{lem}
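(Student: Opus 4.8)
The statement has three parts to verify for $A_{\varepsilon}(\cP)$: (i) it is a linear differential polynomial, (ii) it is nonzero, and (iii) it lies in $(\ps(\cP))\cap\bbD$; $\id$-primitivity holds by construction (it was defined as an $\id$-primitive part of $A_{D_{\varepsilon}}$). The linearity in (i) is immediate: $\dfres(\cP_{\varepsilon})=\det(\cM(\cP_{\varepsilon}))$ is, as a polynomial in the entries of the perturbed coefficient matrix and then specialized, linear in $C$ because $\cM(\cP_{\varepsilon})$ has exactly one column indexed by $1$ (the constant terms $a_i=c_i$ and their derivatives) and all other columns are coefficients of the $u_{j,k}$; expanding the determinant along the "$C$-column'' shows $\dfres(\cP_{\varepsilon})=\sum_{i}\cF_i^{\varepsilon}(c_i)$ for suitable $\cF_i^{\varepsilon}\in\cK[p][\partial]$, hence is linear in $C$. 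Taking the coefficient $A_{D_{\varepsilon}}$ of the lowest power $p^{D_{\varepsilon}}$ preserves linearity, and so does passing to an $\id$-primitive part $A_{\varepsilon}(\cP)$ since that only divides out a left operator factor from each $\cF_i$.

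For (ii), the nonvanishing of $A_{\varepsilon}(\cP)$ is equivalent to the nonvanishing of $A_{D_{\varepsilon}}$, which in turn holds precisely because $D_{\varepsilon}$ is defined as the \emph{lowest} degree of $\dfres(\cP_{\varepsilon})$ in $p$ and, by Theorem~\ref{th-dres(Fphi)}, $\dfres(\cP_{\varepsilon})$ is a nonzero polynomial in $\cK[p]\{C\}$; a nonzero polynomial in $p$ has a nonzero lowest-degree coefficient. So I would simply cite Theorem~\ref{th-dres(Fphi)} here.

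For (iii), this is the one step the excerpt itself flags: "It can be proved as in \cite{R11}, Lemma 6.4 that $A_{D_{\varepsilon}}\in (\ps(\cP))\cap\bbD$.'' The plan is to reproduce that argument in the present setting. Write $\dfres(\cP_{\varepsilon})=\det(\cM(\cP_{\varepsilon}))$; since the entries of $\cM(\cP_{\varepsilon})$ are of the form (entry of $\cM(\cP)$) $-\,p\cdot$(entry coming from $\varepsilon$), one has $\dfres(\cP_{\varepsilon})\in(\ps(\cP_{\varepsilon}))\cap\bbD[p]$ by the analogue of \cite{RS}, Proposition~16(1); expanding in powers of $p$ and using that the $p^{0}$-part is $\dfres(\cP)$ while each higher coefficient is obtained by replacing some rows by rows of $\cM$-entries attached to the $\varepsilon_i$, one checks that every coefficient of $p^{k}$ lies in the ideal generated by $\ps(\cP)$ together with the perturbation terms; at $p$-degree $D_{\varepsilon}$ one extracts, exactly as in \cite{R11}, Lemma~6.4, that $A_{D_{\varepsilon}}$ is a $\bbD$-linear combination of the polynomials in $\ps(\cP)$, i.e. $A_{D_{\varepsilon}}\in(\ps(\cP))\cap\bbD$. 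Then $A_{\varepsilon}(\cP)$, being $A_{D_{\varepsilon}}$ with a common left operator factor removed, still lies in $(\ps(\cP))\cap\bbD$ because dividing out a left $\gcld$ of the $\cF_i$ keeps the membership: if $A_{D_{\varepsilon}}=\dcont(A_{D_{\varepsilon}})(A_{\varepsilon}(\cP))$ and $A_{D_{\varepsilon}}\in(\ps(\cP))$, one argues as in \cite{R11}, Definition~4.9 and the surrounding discussion that $A_{\varepsilon}(\cP)\in(\ps(\cP))$ as well.

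**Main obstacle.** The only nontrivial point is (iii), the ideal membership $A_{\varepsilon}(\cP)\in(\ps(\cP))\cap\bbD$; everything else is bookkeeping. The subtlety is two-fold: first, tracking precisely how the powers of $p$ distribute through the multilinear determinant expansion so that the lowest-$p$-degree coefficient is still expressible using only $\ps(\cP)$ (not $\ps(\cP_{\varepsilon})$), which is exactly the content of \cite{R11}, Lemma~6.4 and transfers verbatim; second, that passing to an $\id$-primitive part does not destroy membership, which follows from the algebra of differential operators developed around \cite{R11}, Definition~4.9. Since the excerpt explicitly invokes these references, the proof reduces to "the proof of \cite{R11}, Lemma~6.4 applies" together with the linearity observation above.
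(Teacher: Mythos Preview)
Your proposal is correct and follows the same approach as the paper. The paper itself does not give a separate proof of this lemma; it is stated as an immediate consequence of the paragraph preceding it, which (i) invokes Theorem~\ref{th-dres(Fphi)} for nonvanishing, (ii) cites \cite{R11}, Lemma~6.4 for the membership $A_{D_{\varepsilon}}\in(\ps(\cP))\cap\bbD$, and (iii) defines $A_{\varepsilon}(\cP)$ as an $\id$-primitive part, so your outline matches exactly. The only place where you are slightly more hand-wavy than necessary is the passage from $A_{D_{\varepsilon}}\in(\ps(\cP))\cap\bbD$ to $A_{\varepsilon}(\cP)\in(\ps(\cP))\cap\bbD$: the clean argument is that if $A_{D_{\varepsilon}}=\sum_i\cF_i(c_i)$ with $\sum_i\cF_i(H_i)=0$ and $\deg(\cF_i)\leq N-o_i-\gamma(\cP)$, then writing $\cF_i=\cL\,\cL_i$ with $\cL=\dcont(A_{D_{\varepsilon}})$ gives $\cL(\sum_i\cL_i(H_i))=0$, and since a nonzero operator in $\cK[\partial]$ is injective on linear elements of $\cK\{U\}$, $\sum_i\cL_i(H_i)=0$; together with $\deg(\cL_i)\leq\deg(\cF_i)$ this yields $A_{\varepsilon}(\cP)=\sum_i\cL_i(F_i)\in(\ps(\cP))\cap\bbD$.
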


The next example illustrates these perturbation methods. Computations were performed with Maple 15 using our implementation of $\dfres(\cP)$. Making this implementation user friendly and publicly available is left as a future project.

\begin{ex}\label{ex-dfresE}
Let $\cP$ be the system of Example \ref{ex-dfres}(3). The perturbed system $\cP_{\varepsilon}$ with $\varepsilon$ as in \eqref{eq-varepsilon} and $\mu=\mu_4=(3,1,2)$ is
\[\cP_{\varepsilon}=\{F_1-p u_{3,2},F_2-p(u_1+u_3), F_3-p(u_{2,1}+u_1),F_4-p u_2\}.\]
By Theorem  \ref{th-dres(Fphi)} we have $\dfres(\cP_{\varepsilon})\neq 0$, in fact
\[\dfres(\cP_{\varepsilon})=p^2 P(c_1,c_2,c_3,c_4),\mbox{ with }P=A_{D_{\varepsilon}}+p A'\in\cK[p]\{C\}\]
and $A_{D_{\varepsilon}}=\cF_1(c_1)+\cF_2(c_2)+\cF_3(c_3)+\cF_4(c_4)$, with
\begin{align*}
&\cF_1=-24(\delta-1)(\delta+1)^2=\cL (-\delta-1),\\
&\cF_2= -48 (\delta-1)(\delta+1)(\delta^2+1)=\cL (-2\delta^2-2),\\
&\cF_3=24(\delta-1)(\delta+3)(\delta+1)=\cL (\delta+3),\\
&\cF_4=48(\delta-1)(\delta+1)=2\cL ,
\end{align*}
and $\cL=24(\delta-1)(\delta+1)$. Finally
\begin{equation}\label{eq-AE}
A_{\varepsilon}(\cP)=-\partial c_1-c_1-2\partial^2c_2-2c_2+\partial c_3+3c_3+2c_4.
\end{equation}
Observe that $\dfres(\cP_{\varepsilon})$ is the determinant of a matrix of size $18\times 18$. If we use $\dcres(\cP_{\phi})$, with $\phi$ as in \cite{R11}(4), to eliminate the differential variables $u_1,u_2,u_3,u_4$, the polynomial \eqref{eq-AE} is also obtained but $\dcres(\cP_{\phi})$ is the determinant of a matrix of size $22\times 22$.
\end{ex}

\section{Sparse linear differential resultant}\label{sec-Computation of the sparse}

The field $\bbQ$ of rational numbers is a field of constants of the derivation $\partial$.
For $i=1,\ldots ,n$ and $j=1,\ldots ,n-1$, let us consider subsets $\frak{S}_{i,j}$ of $\bbN_0$
to be the supports of differential operators
\[\cG_{i,j}:=\left\{\begin{array}{lc}
\sum_{k\in\frak{S}_{i,j}} c_{i,j,k}\partial^k & \frak{S}_{i,j}\neq\emptyset,\\
0 & \frak{S}_{i,j}=\emptyset,
\end{array}
\right.\]
whose coefficients are differential indeterminates over $\bbQ$ in the set
\[\overline{C}:=\cup_{i=1}^n\cup_{j=1}^{n-1}\{c_{i,j,k}\mid k\in\frak{S}_{i,j}\}.\]

Let $\bbF_i$, $i=1,\ldots ,n$ be a {\sf generic sparse linear differential polynomial} as follows,
\begin{equation}\label{eq-bbF}
\bbF_i:=c_i+\sum_{j=1}^{n-1}\cG_{i,j}(u_j)=c_i+\sum_{j=1}^{n-1}\sum_{k\in \frak{S}_{i,j}}c_{i,j,k}u_{j,k}.
\end{equation}
In this section, $\cK=\bbQ\langle \overline{C}\rangle$, a differential field extension of $\bbQ$ with derivation $\partial$, and $\bbD=\cK\{C\}$. Consider the system of linear DPPEs in $\bbD\{U\}$
\[\frak{P}:=\{\bbF_i=c_i-\bbH_i(U)\mid i=1,\ldots ,n\}.\]
Let us assume that the order of $\bbF_i$ is $o_i\geq 0$, $i=1,\ldots ,n$ so that, if $\cG_{i,j}\neq 0$,
\[\frak{S}_{i,j}\subset I_{i,j}(\frak{P})=[\underline{\gamma}_j(\frak{P}),o_i-\overline{\gamma}_j(\frak{P})]\cap\bbZ.\]
We also assume $(\cP 4)$ and observe that $(\cP 2)$ and $(\cP 3)$, in Section \ref{sec-Preliminary}, are verified.

By \cite{LGY}, Corollary 3.4, the dimension of $\id (\frak{P})=[\frak{P}]_{\bbD\{U\}}\cap\bbD$ is $n-1$ if and only if $\frak{P}$ is a differentially essential system.
In such case, $\id (\frak{P})=\sat (R)$, the saturation ideal of a unique (up to scaling) irreducible differential polynomial $R(c_1,\ldots ,c_n)$ in $\bbD=\cK\{C\}$. By clearing denominators when necessary, we can assume that $R\in\bbQ\{\overline{C},C\}$. By \cite{LGY}, Definition 3.5, $R$ is the {\rm sparse differential resultant} of $\frak{P}$. We will denote it by $\dres (\frak{P})$ and call it the {\sf sparse linear differential resultant} of $\frak{P}$.

\begin{rem}\label{rem-dres}
Given a differentially essential system $\frak{P}$, by Theorem \ref{thm-dimid}, $\id(\frak{P})=[\dres(\frak{P})]_{\bbD}$ and $\dres(\frak{P})$ is a linear $\id$-primitive differential polynomial in $\id (\frak{P})$.
Observe that $\dres (\frak{P})$ is the implicit equation of the system of linear DPPEs $\frak{P}$, as defined in \cite{RS}, Definition 2.
Furthermore, given $\PS\subset\partial\frak{P}$ and $\cU\subset\{U\}$ verifying $(\ps1)$ and $(\ps2)$, it holds that:
\begin{enumerate}
\item $\dres (\frak{P})$ belongs to $(\PS)\cap\bbD$ and,

\item $\c_{\PS} (\dres (\frak{P}))=|\PS|-1-\rank (\cL(\PS,\cU))$.
\end{enumerate}
\end{rem}

\begin{prop}\label{prop-dmac}
Let $\frak{P}$ be a differentially essential system. Given $\PS\subset \partial\frak{P}$ and $\cU\subset\{U\}$ verifying $(\ps1)$ and $(\ps2)$, the following statements are equivalent:
\begin{enumerate}
\item $\det (\cM(\PS,\cU))\neq 0$.

\item $\ord(\dres (\frak{P}),c_i)\leq L_i$, $i=1,\ldots,n$ and there exists $k\in\{1,\ldots ,n\}$ such that $\ord(\dres (\frak{P}),c_k)= L_k$.
\end{enumerate}
Furthermore, if $\det (\cM(\PS,\cU))\neq 0$ then $\det (\cM(\PS,\cU))=\alpha \dres (\frak{P})$ for some nonzero $\alpha\in\cK$.
\end{prop}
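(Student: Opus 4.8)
The plan is to reduce the entire statement to the behaviour of the co-order of $R:=\dres(\frak{P})$. Since $\frak{P}$ is differentially essential, $\id(\frak{P})=[R]_{\bbD}$ and, by Remark~\ref{rem-dres}, $R$ is a linear $\id$-primitive differential polynomial with $R\in(\PS)\cap\bbD$ and $\c_{\PS}(R)=|\PS|-1-\rank(\cL(\PS,\cU))$. Combining this with \eqref{eq-Lequiv} gives at once
\[
\det(\cM(\PS,\cU))\neq 0\ \Longleftrightarrow\ \rank(\cL(\PS,\cU))=|\PS|-1\ \Longleftrightarrow\ \c_{\PS}(R)=0,
\]
so the core of the proof is to identify the condition $\c_{\PS}(R)=0$ with condition~(2).

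The key preliminary step is an explicit description of $(\PS)\cap\bbD$, which I would isolate as a lemma. The $|\PS|$ generators $\partial^k\bbF_i=c_{i,k}-\partial^k(\bbH_i(U))$, $k\in[0,L_i]\cap\bbZ$, $i=1,\dots,n$, are in triangular (solved) form with respect to the variables $\{c_{i,k}\mid k\le L_i\}$: the variable $c_{i,k}$ occurs in $\partial^k\bbF_i$ and in no other generator, while $\partial^k(\bbH_i(U))\in\cK[\cU]$ by $(\ps1)$ and $(\ps2)$. Hence $\cK[\cC_{\PS},\cU]/(\PS)\cong\cK[\cU]$ via $c_{i,k}\mapsto\partial^k(\bbH_i(U))$ (so, in particular, $(\PS)\cap\cK[\cU]=(0)$), and therefore, for every $B\in\bbD$,
\begin{equation}\label{eq-PSstar}
B\in(\PS)\cap\bbD\ \Longleftrightarrow\ \ord(B,c_i)\le L_i\ (i=1,\dots,n)\mbox{ and }B\in\id(\frak{P}).
\end{equation}
The implication ``$\Rightarrow$'' in \eqref{eq-PSstar} is the remark preceding the definition of co-order; ``$\Leftarrow$'' follows from the quotient isomorphism above together with the description $\id(\frak{P})=\{f\in\bbD\mid f(\bbH_1(U),\dots,\bbH_n(U))=0\}$ of the implicit ideal.

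Now $(1)\Leftrightarrow(2)$ is bookkeeping with \eqref{eq-PSstar}. Applying it to $R\in(\PS)$ yields the first half of~(2): $\ord(R,c_i)\le L_i$ for all $i$. Since $R\in(\PS)$, the condition $\c_{\PS}(R)=0$ is equivalent to $\partial R\notin(\PS)$; as $\id(\frak{P})$ is a differential ideal, $\partial R\in\id(\frak{P})$, so \eqref{eq-PSstar} turns $\partial R\notin(\PS)$ into ``$\ord(\partial R,c_k)>L_k$ for some $k$''. Because $\ord(\partial R,c_k)=\ord(R,c_k)+1$ whenever $c_k$ occurs in $R$ and $L_k\ge 0$, this is the same as ``$\ord(R,c_k)\ge L_k$ for some $k$'', which, given $\ord(R,c_i)\le L_i$ for all $i$, is precisely ``$\ord(R,c_k)=L_k$ for some $k$'', i.e.\ condition~(2). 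For the last assertion, assume $D:=\det(\cM(\PS,\cU))\neq 0$. By \cite{RS}, Proposition~16(1) (recalled after \eqref{eq-Lequiv}), $D\in\id(\frak{P})\cap\bbD$; expanding the determinant along the column indexed by $1$, whose entries are the variables $c_{i,k}$ (each appearing in a single row, the complementary minors having entries in $\cK$), shows $D=\sum_{i=1}^{n}\cF_i(c_i)$ with $\cF_i\in\cK[\partial]$, $\deg\cF_i\le L_i$, so $D$ is a nonzero linear element of $\id(\frak{P})$. By \cite{R11}, Lemma~4.4 (equivalently: the linear part of $\id(\frak{P})$ is the free rank-one $\cK[\partial]$-module generated by the $\id$-primitive polynomial $R$), $D=\cL(R)$ for some nonzero $\cL\in\cK[\partial]$; choosing $k$ with $\ord(R,c_k)=L_k$, which exists by~(2), gives $L_k\ge\ord(D,c_k)=\deg(\cL)+L_k$, hence $\deg(\cL)=0$, $\cL=\alpha\in\cK\setminus\{0\}$ and $D=\alpha\,\dres(\frak{P})$.

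The step I expect to be the main obstacle is \eqref{eq-PSstar}: one has to check carefully that hypotheses $(\ps1)$ and $(\ps2)$ really force the generators of $(\PS)$ to be triangular in the $c_{i,k}$'s and $\partial^k(\bbH_i(U))\in\cK[\cU]$, so that the elimination is clean and $(\PS)\cap\cK[\cU]=(0)$; everything else is then routine manipulation of orders and co-orders. A secondary technical point, used only in the last paragraph, is that every linear differential polynomial in $\id(\frak{P})$ has the form $\cL(R)$ with $\cL\in\cK[\partial]$: this may be quoted from \cite{R11} or re-derived from the $\id$-primitivity of $R$ and the fact that submodules of free modules over the principal ideal domain $\cK[\partial]$ are free.
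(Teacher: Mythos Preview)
Your proof is correct and follows the same route as the paper's: reduce the equivalence via \eqref{eq-Lequiv} and Remark~\ref{rem-dres} to the condition $\c_{\PS}(R)=0$, then identify this with condition~(2), and for the last assertion use that $D$ is linear in $\id(\frak{P})$ together with the $\id$-primitivity of $R$. Your argument is simply more explicit --- the paper compresses your lemma~\eqref{eq-PSstar} into the clause ``since $\dres(\frak{P})\in(\PS)$, this is equivalent to 2'', and compresses your $D=\cL(R)\Rightarrow\deg\cL=0$ step into ``$\c_{\PS}(D)=0$ as well; since $\dres(\frak{P})$ is $\id$-primitive, $D=\alpha\,\dres(\frak{P})$''.
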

\begin{proof}
By \eqref{eq-Lequiv}, 1 is equivalent to $\rank(\cL(\PS,\cU))=|\PS|-1$.
Furthermore, by Remark \ref{rem-dres}(2), it is equivalent to $\c_{\PS}(\dres (\frak{P}))=0$ and, since $\dres (\frak{P})\in (\PS)$, this is equivalent to 2. Finally, if $D=\det (\cM(\PS,\cU))\neq 0$ then $D\in (\PS)\cap\bbD$ and $\c_{\PS}(D)=0$ as well. Since $\dres (\frak{P})$ is $\id$-primitive, there exists a nonzero $\alpha\in\cK$ such that $D=\alpha \dres (\frak{P})$.
\end{proof}

If $\frak{P}$ is differentially essential then there exists a unique super essential subsystem $\frak{P}^*$ of $\frak{P}$, by Theorem \ref{thm-seunique}. If $\frak{P}$ is super essential then $\frak{P}^*=\frak{P}$, otherwise, by Theorem \ref{thm-subSC}, $\frak{P}^*$ can be obtained by \eqref{eq-P*}.

\begin{lem}\label{lem-idPp0}
Given $i\in\{1,\ldots ,n\}$, if $\det (X_i(\frak{P}))\neq 0$ then, for every subset $\frak{P}'$ of $\frak{P}_i$, the differential ideal $\id (\frak{P}')$ contains no  linear differential polynomial.
\end{lem}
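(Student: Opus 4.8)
The plan is to suppose, for contradiction, that some subsystem $\frak{P}'=\{\bbF_{h_1},\dots ,\bbF_{h_m}\}$ of $\frak{P}_i$ carries a nonzero linear differential polynomial $B\in\id(\frak{P}')$, and to rule this out using $\det(X_i(\frak{P}))\neq 0$ together with the genericity of the coefficients. The first step is to convert the membership into an operator identity, exactly as in \cite{R11}, Lemma 4.4 and \eqref{eq-linID} but applied to $\frak{P}'$: such a $B$ has the form $B=\sum_{l=1}^{m}\cF_{h_l}(c_{h_l})$ with $\cF_{h_l}\in\cK[\partial]$ and $\sum_{l=1}^{m}\cF_{h_l}(\bbH_{h_l}(U))=0$; substituting $\bbH_{h_l}(U)=-\sum_{j=1}^{n-1}\cG_{h_l,j}(u_j)$ and comparing, in $\cK\{U\}$, the parts supported on the distinct differential indeterminates $u_j$ gives $\sum_{l=1}^{m}\cF_{h_l}\,\cG_{h_l,j}=0$ in $\cK[\partial]$ for every $j$. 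Since $B\neq 0$, the tuple $(\cF_{h_1},\dots ,\cF_{h_m})\in\cK[\partial]^{m}$ is nonzero, so it suffices to prove that the rows of the $m\times(n-1)$ matrix $(\cG_{h_l,j})$ are left linearly independent over the skew field of fractions $\cK(\partial)$ of the Ore domain $\cK[\partial]$.

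Next I would use the hypothesis through Lemma~\ref{lem-Xi}: $\det(X_i(\frak{P}))\neq 0$ supplies $\tau_i\in S_{n-1}$ verifying \eqref{eq-taui}, hence the bijection $\mu_i$ of \eqref{eq-mui} with $\cG_{h,\mu_i(h)}\neq 0$ for all $h\in\{1,\dots ,n\}\setminus\{i\}$; then $j_l:=\mu_i(h_l)$, $l=1,\dots ,m$, are pairwise distinct columns with $\cG_{h_l,j_l}\neq 0$. It is enough to show that the $m\times m$ submatrix $M:=(\cG_{h_l,j_r})_{l,r=1}^{m}$ is invertible over $\cK(\partial)$, and this is where genericity is used decisively: the coefficients $c_{h_l,j_l,k}$, $k\in\frak{S}_{h_l,j_l}\neq\emptyset$, of the diagonal entry $\cG_{h_l,j_l}$ are differential indeterminates over $\bbQ$ occurring in no other entry of $M$. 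Running Gaussian elimination over $\cK(\partial)$ with pivots on the diagonal, a routine induction shows that the correction accumulated in the $(l,l)$ entry before pivoting there involves only indeterminates $c_{h,j,k}$ with $h\in\{h_1,\dots ,h_{l-1}\}$, or with $h=h_l$ and $j\in\{j_1,\dots ,j_{l-1}\}$, hence never the ``private block'' $\{c_{h_l,j_l,k}\}_{k\in\frak{S}_{h_l,j_l}}$ which does appear in $\cG_{h_l,j_l}$; combined with the elementary fact that for a differential subfield $F\subseteq\cK$ an operator of $\cK[\partial]$ having a coefficient outside $F$ cannot lie in $F(\partial)$, each successive pivot stays nonzero, so $M$ row-reduces to an upper-triangular matrix with nonzero diagonal and is invertible. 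Hence $(\cF_{h_l})_l=0$, so $B=0$, a contradiction.

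An alternative and perhaps cleaner endgame avoids operators altogether by showing that in fact $\id(\frak{P}')=\{0\}$: the distinct columns $j_l$ already give $\nu(\frak{P}')\geq m=|\frak{P}'|$, and specializing the generic coefficients by $c_{h_l,j_l,k_l}\mapsto 1$ for one chosen $k_l\in\frak{S}_{h_l,j_l}$ and every other $c$-indeterminate to $0$ turns each $\bbH_{h_l}(U)$ into $-\partial^{k_l}u_{j_l}$, and these are differentially independent over $\bbQ$ because the $u_{j_l}$ are distinct differential indeterminates; since the locus in coefficient space where $(\bbH_{h_1}(U),\dots ,\bbH_{h_m}(U))$ is differentially dependent is Zariski closed (cut out by vanishing of maximal minors of a Jacobian), this single witness forces differential independence at the generic point, i.e.\ $\id(\frak{P}')=\{0\}$, which contains no nonzero linear differential polynomial. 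In either route the main obstacle is the same: turning the purely combinatorial content of $\det(X_i(\frak{P}))\neq 0$, namely the existence of the matching $\mu_i$ among the nonempty supports $\frak{S}_{h,\mu_i(h)}$, into the required noncommutative-rank (or differential-independence) statement; the key point making this possible is that the matching equips each equation of $\frak{P}'$ with a block of coefficient indeterminates the others cannot cancel.
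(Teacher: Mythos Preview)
Your main argument---reducing to the operator identity $\sum_l \cF_{h_l}\cG_{h_l,j}=0$ and then proving the square submatrix $M=(\cG_{h_l,j_r})$ is invertible over the skew field $\cK(\partial)$ by Gaussian elimination with ``private'' coefficient blocks---is correct. The inductive bookkeeping on which indeterminates occur in the corrected $(l,l)$ entry is accurate, and the lemma $\cK[\partial]\cap F(\partial)=F[\partial]$ for a differential subfield $F\subseteq\cK$ (proved by comparing leading coefficients in $Pb=a$) is exactly what is needed to keep each pivot nonzero.

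This is a genuinely different route from the paper's. The paper does not touch $\cK(\partial)$ at all: it simply specializes the coefficients $c_{h_t,j,k}$ with $j\neq\mu_i(h_t)$ to zero, so that each $\bbH_{h_t}$ becomes a nonzero linear differential polynomial in the single indeterminate $u_{\mu_i(h_t)}$, and then appeals to the differential independence of $u_{\mu_i(h_1)},\ldots,u_{\mu_i(h_m)}$. Your ``alternative endgame'' is essentially this same specialization idea (with a slightly more drastic specialization), so there you are very close to the paper. What your skew-field argument buys is a fully self-contained linear-algebra proof that never needs to worry about whether the relation $B$ survives specialization; what the paper's argument buys is brevity, since one only has to observe that elements supported in disjoint differential indeterminates are differentially independent.

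One caution about your alternative: the sentence invoking a ``Jacobian'' and a Zariski-closed locus is too vague to stand on its own---there is no single algebraic Jacobian whose maximal minors cut out differential dependence. The honest way to close that gap (and, implicitly, the paper's) is to clear denominators so that $B\in\bbQ\{\overline{C}\}\{C'\}$, expand $B$ by total degree in the specialized variables $\overline{C}_2=\{c_{h_t,j,k}:j\neq\mu_i(h_t)\}$, and note that the lowest-degree homogeneous component $B_d\neq 0$ yields, upon substituting $\bbH=\bbH'+(\text{terms of degree }1\text{ in }\overline{C}_2)$, a nonzero linear relation among the $\bbH'_{h_t}$ with coefficients in $\bbQ\{\overline{C}\setminus\overline{C}_2\}$---which is impossible since the $\bbH'_{h_t}$ involve pairwise distinct $u$'s. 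If you keep the alternative, replace the Jacobian remark with this degree-filtration step.
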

\begin{proof}
Let $\frak{P}'=\{\bbF_{h_1},\ldots ,\bbF_{h_m}\}$, with $h_1,\ldots,h_m\in\{1,\ldots ,n\}\backslash\{i\}$. By Lemma \ref{lem-Xi}, there exists $\mu_i$ as in \eqref{eq-mui}, such that $\cG_{h_t,\mu_i(h_t)}\neq 0$, $t=1,\ldots ,m$.
Given a linear differential polynomial $B\in\id (\frak{P}')$, by \eqref{eq-linID}, there exist $\cF_{h_1},\ldots ,\cF_{h_m}\in\cK[\partial]$ such that
$\sum_{t=1}^m \cF_{h_t}(\bbH_{h_t}(U))=0$, ($B=\sum_{t=1}^m \cF_{h_t}(c_{h_t})$).
Replacing by zero the coefficients of $\cG_{h_t,j}$, for $t=1,\ldots ,m$ and $j\neq \mu_i(h_t)$, this would contradict that  $u_{\mu_i(h_1)},\ldots ,u_{\mu_i(h_m)}$ are differentially independent. This proves that $B$ does not  exist.
\end{proof}

From the previous lemma, we conclude that to compute $\dres(\frak{P})$ we need to use all the elements of $\frak{P}^*$.

\begin{cor}
Let $\frak{P}$ be a differentially essential system with super essential subsystem $\frak{P}^*$. The following statements hold:
\begin{enumerate}
\item $\dres (\frak{P})$ is a linear $\id$-primitive differential polynomial in $\id (\frak{P}^*)$.

\item $\frak{P}^*$ is the smallest subset of $\frak{P}$ such that $\dres(\frak{P})\in\id (\frak{P}^*)$.

\item If $\dfres (\frak{P}^*)\neq 0$ then $\dres (\frak{P})=\frac{1}{\alpha}\dfres (\frak{P}^*)$, for a nonzero differential polynomial $\alpha$ in $\cK$.
\end{enumerate}
\end{cor}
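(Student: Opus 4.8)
The plan is to deduce all three statements from results already established in the excerpt, treating $\frak{P}^*$ as a system of linear DPPEs in its own right. First I would observe that $\frak{P}^*$ satisfies the hypotheses ($\cP$1)--($\cP$4) relative to the parameter set $C^*=\{c_i\mid \bbF_i\in\frak{P}^*\}$: by Theorem~\ref{thm-subSC} (or trivially if $\frak{P}=\frak{P}^*$) it is super essential with $\nu(\frak{P}^*)=|\frak{P}^*|-1$, so $\frak{P}^*$ is a differentially essential system of linear DPPEs in $|\frak{P}^*|-1$ differential indeterminates. Hence the theory of Section~\ref{sec-Computation of the sparse} applies verbatim to $\frak{P}^*$: by \cite{LGY}, Corollary~3.4 and Definition~3.5, $\id(\frak{P}^*)=\sat(\dres(\frak{P}^*))$ and, by Remark~\ref{rem-dres}, $\id(\frak{P}^*)=[\dres(\frak{P}^*)]_{\bbD^*}$ with $\dres(\frak{P}^*)$ a linear $\id$-primitive differential polynomial, where $\bbD^*=\cK\{C^*\}$.

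For item~1, I would show $\dres(\frak{P})\in\id(\frak{P}^*)$ and that it is $\id$-primitive there. Membership follows from the construction of $\frak{P}^*$ via \eqref{eq-P*}: the generator $e_0$ of $(\bbP)\cap\bbK[C]$ has $\cI(e_0)$ equal to the index set of $\frak{P}^*$, and the differential analogue --- namely that $\dres(\frak{P})$ is supported only on the variables $c_i$ with $\bbF_i\in\frak{P}^*$ --- is forced by Lemma~\ref{lem-idPp0}: if $\dres(\frak{P})$ had a nonzero term in some $c_i$ with $i\notin\cI(e_0)$, then since $\frak{P}$ is differentially essential we have $\cI=\{1,\dots,n\}\setminus\cI(e_0)$ nonempty and $\det(X_i(\frak{P}))\neq 0$ for $i\in\cI(e_0)$; using \eqref{eq-linID} and \eqref{eq-e0} one checks that the operators $\cF_i$ in the representation $\dres(\frak{P})=\sum\cF_i(c_i)$ must vanish off $\cI(e_0)$, exactly as in the proof of Theorem~\ref{thm-seunique}(1). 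Thus $\dres(\frak{P})\in\bbD^*$ and, being a nonzero element of $\id(\frak{P})$ supported on $C^*$, it lies in $\id(\frak{P}^*)$ by \eqref{eq-idPp}. That it is $\id$-primitive in $\id(\frak{P}^*)$ is then immediate since $\id(\frak{P}^*)=[\dres(\frak{P}^*)]_{\bbD^*}$ with $\dres(\frak{P}^*)$ $\id$-primitive, and $\dres(\frak{P})$, being $\id$-primitive in $\id(\frak{P})$ and supported on $C^*$, must differ from $\dres(\frak{P}^*)$ only by a nonzero factor in $\cK$.

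For item~2, I would argue by minimality: $\frak{P}^*$ is super essential, so by Lemma~\ref{lem-idPp0} (applied with any index $i\notin\cI(e_0)$, equivalently to any proper subsystem $\frak{P}'\subsetneq\frak{P}^*$, noting $\frak{P}'\subseteq\frak{P}_i$ for some $i$ when $\frak{P}^*\subseteq\frak{P}_i$, or more directly because $\nu(\frak{P}')=|\frak{P}'|$ by Proposition~\ref{prop-irse}) no proper subsystem $\frak{P}'$ has $\id(\frak{P}')$ containing a nonzero linear differential polynomial; in particular no proper subsystem of $\frak{P}^*$ can contain $\dres(\frak{P})$, and since item~1 gives $\dres(\frak{P})\in\id(\frak{P}^*)$, $\frak{P}^*$ is indeed the smallest such subset. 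Item~3 is then a direct application of Proposition~\ref{prop-dmac}, now with $\PS=\ps(\frak{P}^*)$ and $\cU=\cV$: when $\dfres(\frak{P}^*)=\det(\cM(\frak{P}^*))\neq 0$ it lies in $(\ps(\frak{P}^*))\cap\bbD^*\subseteq\id(\frak{P}^*)$, and since $\dres(\frak{P}^*)$ is $\id$-primitive and generates $\id(\frak{P}^*)$ after saturation, $\dfres(\frak{P}^*)=\alpha\,\dres(\frak{P}^*)=\alpha\,\dres(\frak{P})$ (up to the $\cK$-scalar identifying $\dres(\frak{P}^*)$ and $\dres(\frak{P})$) for some nonzero $\alpha\in\cK$.

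The main obstacle I anticipate is the rigorous passage from the \emph{algebraic} support statement "$\cI(e_0)$ indexes $\frak{P}^*$" to the \emph{differential} support statement "$\dres(\frak{P})$ involves only the $c_i$ with $i\in\cI(e_0)$". The cleanest route is probably to invoke Lemma~\ref{lem-idPp0} together with the uniqueness of $\dres(\frak{P})$ up to scaling: any linear $\id$-primitive generator of $\id(\frak{P})$ must, by Lemma~\ref{lem-idPp0} applied to each $i$ with $\det(X_i(\frak{P}))\neq 0$, have all its "off-$\cI(e_0)$" operators $\cF_i$ equal to zero, since otherwise restricting to the sub-support would violate differential independence of the corresponding $u_j$'s --- the same mechanism used in Theorem~\ref{thm-seunique}. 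Everything else is bookkeeping on top of Remark~\ref{rem-dres}, Proposition~\ref{prop-dmac} and the structural results of Section~\ref{sec-irredundant systems of dpls}.
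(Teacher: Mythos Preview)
Your approach to items~2 and~3 matches the paper's (Lemma~\ref{lem-idPp0} and Proposition~\ref{prop-dmac} respectively), modulo one slip: in item~2 you write ``applied with any index $i\notin\cI(e_0)$'', but for such $i$ one has $\det(X_i(\frak{P}))=0$ and Lemma~\ref{lem-idPp0} does not apply. The correct move is the opposite one: any proper $\frak{P}'\subsetneq\frak{P}^*$ sits inside $\frak{P}_{i_0}$ for some $i_0\in\cI(e_0)$, and for that $i_0$ one has $\det(X_{i_0}(\frak{P}))\neq 0$, so Lemma~\ref{lem-idPp0} forbids any nonzero linear polynomial in $\id(\frak{P}')$.

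For item~1, however, the paper takes a different and much shorter route than the one you propose, and this is exactly where your argument has a gap. You try to show directly that $\dres(\frak{P})$ is supported on $C^*$ by arguing that the operators $\cF_i$ in \eqref{eq-linID} vanish for $i\notin\cI(e_0)$, citing Lemma~\ref{lem-idPp0} and ``the mechanism of Theorem~\ref{thm-seunique}(1)''. But Lemma~\ref{lem-idPp0} speaks about indices with $\det(X_i)\neq 0$, i.e.\ $i\in\cI(e_0)$, and its conclusion concerns subsystems \emph{avoiding} such an index; it gives no direct handle on the coefficients $\cF_i$ for $i\notin\cI(e_0)$. The analogy with Theorem~\ref{thm-seunique}(1) is purely algebraic (uniqueness of $e_0$ in $(\bbP)\cap\bbK[C]$) and does not transfer to the differential operators $\cF_i$ without further work --- this is precisely the ``main obstacle'' you yourself flag, and you have not resolved it.

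The paper bypasses this entirely by invoking Proposition~\ref{prop-idPPp}: since $\nu(\frak{P}^*)=|\frak{P}^*|-1<|\frak{P}^*|$ and $\dim(\id(\frak{P}))=n-1$, that proposition gives a single linear $A$ with $\id(\frak{P})=[A]_{\bbD}$ and $\id(\frak{P}^*)=[A]_{\bbD^*}$, whence $\dres(\frak{P})=\dres(\frak{P}^*)$ up to a unit in $\cK$. This one line replaces your entire support argument. So the ingredient you are missing for item~1 is already sitting in Section~\ref{sec-linDPPEs}.
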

\begin{proof}
By Proposition \ref{prop-idPPp}, $\dres (\frak{P})=\dres (\frak{P}^*)$, which proves $1$. Statement $2$ is a consequence of Lemma \ref{lem-idPp0} and statement $3$ follows from Proposition \ref{prop-dmac}.
\end{proof}

Observe that the extraneous factor $\alpha$ does not depend on the variables in $C=\{c_1,\ldots ,c_n\}$, since $\alpha$ is a nonzero differential polynomial in $\cK=\bbQ\langle \overline{C}\rangle$.

\begin{ex}
Let us consider the following system $\frak{P}$ in $\bbD\{u_1,u_2\}$
\begin{align*}
&\bbF_1=c_1+c_{1,1,0}u_1+c_{1,2,1} u_{2,1},\\
&\bbF_2=c_2+c_{2,1,2} u_{1,2},\\
&\bbF_3=c_3+c_{3,1,0} u_1+c_{3,2,1} u_{2,1}.
\end{align*}
The matrix $X(\frak{P})$ is as in Example \ref{exs-se-de}(1), thus $\frak{P}$ is super essential. The formula $\dfres(\frak{P})$ is the determinant of the matrix $\cM(\frak{P})$ whose rows can be reorganized to get
\[
 \left[
\begin{array}{cccccccc}
c_{1,2,1} & 0 & 2\,\partial c_{1,2,1} & c_{1,1
,0} & \partial^2 c_{1,2,1} & 2\,\partial c_{1,1,0} & \partial^2 c_{1,1,0} & \partial^2 c_{1} \\
0 & c_{1,1,2} & 0 & \partial c_{1,1,2} & 0 & 0 & 0
 & \partial c_{2} \\
c_{3,2,1} & 0 & 2\,\partial c_{3,2,1} & c_{3,1
,0} & \partial^2 c_{3,2,1} & 2\,\partial c_{3,1,0} & \partial^2 c_{3,1,0} & \partial^2 c_{3} \\
0 & 0 & c_{1,2,1} & 0 & \partial c_{1,2,1} & c_{1
,1,0} & \partial c_{1,1,0} & \partial c_{1} \\
0 & 0 & 0 & c_{1,1,2} & 0 & 0 & 0 & c_{2} \\
0 & 0 & c_{3,2,1} & 0 & \partial c_{3,2,1} & c_{3,1,0} & \partial c_{3,1,0} & \partial c_{3} \\
0 & 0 & 0 & 0 & c_{1,2,1} & 0 & c_{1,1,0} & c_{1} \\
0 & 0 & 0 & 0 & c_{3,2,1} & 0 & c_{3,1,0} & c_{3}
\end{array}
 \right].
\]
Namely $\dfres(\frak{P})=-c_{1,1,2}\dres(\frak{P})$, with
\[\ord(\dres(\frak{P}),c_1)=2, \ord(\dres(\frak{P}),c_2)=0\mbox{ and }\ord(\dres(\frak{P}),c_3)=2.\]
\end{ex}

Let us denote by $\sigma (\frak{P})$ the $(\overline{\beta},\overline{\omega})$-symbol matrix of $\frak{P}$ as in \eqref{eq-sigmaP}, for $\overline{\beta}=(\overline{\gamma}_1(\frak{P}),\ldots ,\overline{\gamma}_{n-1}(\frak{P}))$ and $\overline{\omega}=(o_1,\dots ,o_n)$.

\begin{lem}
If $\rank(\sigma (\frak{P}))<n-1$ then $\dfres (\frak{P})=0$.
\end{lem}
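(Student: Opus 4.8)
The plan is to locate inside the square matrix $\cM(\frak{P})$, whose determinant is $\dfres(\frak{P})$, a block of $n-1$ columns whose only nonzero entries form precisely the symbol matrix $\sigma(\frak{P})$; a rank drop in $\sigma(\frak{P})$ then propagates to a linear dependence among those columns of $\cM(\frak{P})$, killing the determinant. Write $\gamma:=\gamma(\frak{P})$, $\overline{\gamma}_j:=\overline{\gamma}_j(\frak{P})$, $\underline{\gamma}_j:=\underline{\gamma}_j(\frak{P})$ and $N:=\sum_i o_i$. Recall from \eqref{eq-V} that the columns of $\cM(\frak{P})$ are indexed by the derivatives $u_{j,k}$ with $k\in[\underline{\gamma}_j,N-\overline{\gamma}_j-\gamma]$ together with one column for the constant terms, and from \eqref{eq-ps} that its rows are the $\partial^{k}\bbF_i$ with $k\in[0,N-o_i-\gamma]$. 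Since $\dfres(\frak{P})$ is defined we have $N-o_i-\gamma\ge 0$ for all $i$, so for each $i$ the row $r_i$ of $\partial^{N-o_i-\gamma}\bbF_i$ actually occurs in $\cM(\frak{P})$; for each $j$ let $c_j$ be the column of $\cM(\frak{P})$ indexed by the top derivative $u_{j,N-\overline{\gamma}_j-\gamma}$ (which lies in $\cV$ because $\gamma_j\le o_{i}$ for any $i$ with $\cL_{i,j}\neq 0$, hence $N-\gamma_j-\gamma\ge N-o_i-\gamma\ge 0$).

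The core step is the identity: the $n\times(n-1)$ submatrix of $\cM(\frak{P})$ with rows $r_1,\dots,r_n$ and columns $c_1,\dots,c_{n-1}$ equals $\sigma(\frak{P})$, and every entry of each $c_j$ outside the rows $r_1,\dots,r_n$ vanishes. To see this I would expand $\partial^{k}(\cL_{i,j}(u_j))$: its highest‑order term in $u_j$ is $u_{j,\,k+\deg(\cL_{i,j})}$, with coefficient $\lead(\cL_{i,j})$. Since $\deg(\cL_{i,j})\le o_i-\overline{\gamma}_j$ by \eqref{eq-oigammaj}, for $k<N-o_i-\gamma$ we get $k+\deg(\cL_{i,j})<N-\overline{\gamma}_j-\gamma$, so the entry of $c_j$ in row $\partial^{k}\bbF_i$ is $0$; for $k=N-o_i-\gamma$ the order $N-\overline{\gamma}_j-\gamma$ is reached precisely when $\deg(\cL_{i,j})=o_i-\overline{\gamma}_j$, and then the entry is $\lead(\cL_{i,j})=\sigma_{o_i-\overline{\gamma}_j}(\cL_{i,j})=\sigma_{i,j}$, while in every remaining case (strict inequality $\deg(\cL_{i,j})<o_i-\overline{\gamma}_j$, or $\cL_{i,j}=0$) the entry is $0$, matching $\sigma_{i,j}$ by \eqref{eq-sigmaP}. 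This calculation is the only computation in the proof, and it is where one must be careful that the binomial factor multiplying $\lead(\cL_{i,j})$ is $\binom{N-o_i-\gamma}{0}=1$, and that the pair $\overline{\beta}=(\overline{\gamma}_1,\dots,\overline{\gamma}_{n-1})$, $\overline{\omega}=(o_1,\dots,o_n)$ defining $\sigma(\frak{P})$ indeed satisfies ($\beta$2) (this is exactly \eqref{eq-oigammaj}) and ($\beta$1) (because $\gamma\ge\sum_j\overline{\gamma}_j$ forces $N-o_i-\sum_j\overline{\gamma}_j\ge N-o_i-\gamma\ge 0$).

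Granting the core step, the conclusion is immediate. If $\rank(\sigma(\frak{P}))<n-1$, the $n-1$ columns of $\sigma(\frak{P})$ satisfy a nontrivial relation $\sum_{j=1}^{n-1}\lambda_j\,(\sigma_{i,j})_{i=1}^{n}=0$ with $\lambda_j\in\cK$ not all zero. By the core step, $\sum_{j=1}^{n-1}\lambda_j c_j$ is the zero vector of $\cM(\frak{P})$: it vanishes on each row $r_i$ because $\sum_j\lambda_j\sigma_{i,j}=0$, and on every other row because each $c_j$ already vanishes there. Hence the columns $c_1,\dots,c_{n-1}$ of $\cM(\frak{P})$ are linearly dependent, so $\dfres(\frak{P})=\det(\cM(\frak{P}))=0$. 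I do not anticipate a genuine obstacle; the only delicate points are the index bookkeeping ensuring that $r_i$ and $c_j$ genuinely index rows and columns of $\cM(\frak{P})$, and verifying the entry equality $(\cM(\frak{P}))_{r_i,c_j}=\sigma_{i,j}$ on the nose in all the degenerate subcases above.
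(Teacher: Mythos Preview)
Your argument is correct and follows exactly the paper's approach: both identify the $n-1$ columns of $\cM(\frak{P})$ indexed by the top derivatives $u_{j,N-\overline{\gamma}_j-\gamma}$ and observe that the only nonzero rows of this block are those of $\sigma(\frak{P})$, whence a rank drop forces $\det(\cM(\frak{P}))=0$. The paper states this in one sentence; you have carefully supplied the index verifications and the entry-by-entry computation the paper omits.
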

\begin{proof}
Observe that the submatrix of $\cM (\frak{P})$ whose columns are indexed by $u_{j,N-\overline{\gamma}_j(\frak{P})-\gamma(\frak{P})}$, $j=1,\ldots ,n-1$ has as nonzero rows the rows of $\sigma (\frak{P})$.
\end{proof}
Examples of systems $\frak{P}$ with $\rank(\sigma (\frak{P}))=n-1$ such that $\dfres (\frak{P})=0$ have not been found so far, but their non existence has not be proved either.
In addition observe that, even if  $\dfres(\frak{P})\neq 0$, for an specialization $\cP$ of $\frak{P}$ it may happen that $\dfres(\cP)= 0$, see Example \ref{ex-fin}.

\begin{ex}\label{ex-fin}
Given the differentially essential system of generic differential polynomials $\frak{P}=\{\bbF_1,\bbF_2,\bbF_3,\bbF_4\}$,
\begin{align*}
&\bbF_1=c_1+c_{1,1,0} u_1+c_{1,1,1} u_{1,1}+c_{1,3,0} u_3+c_{1,3,1} u_{3,1},\\
&\bbF_2= c_2+c_{2,2,0} u_2+c_{2,2,1} u_{2,1},\\
&\bbF_3= c_3+c_{3,1,0} u_1+c_{3,3,0} u_3,\\
&\bbF_4= c_4+c_{4,1,0} u_1+c_{4,2,0} u_2+c_{4,3,0} u_3,
\end{align*}
let us consider the specialization $\cP$ of $\frak{P}$
\[
\cP=\{ c_1+ u_1+ 2 u_{1,1}+ u_3+ 2 u_{3,1}, c_2+u_2+u_{2,1}, c_3+u_1+ u_3, c_4+ u_1+u_2+u_3\}.
\]
It holds that $\dfres(\frak{P})\neq 0$ but $\dfres(\cP)= 0$, even thought $\cP$ is super essential and $\cM(\cP)$ has no zero columns. We can check, applying \cite{R11}, Algorithm 7.1,
 that $\dim \id (\cP)<3$.
\end{ex}

\para

By Theorem \ref{th-dres(Fphi)}, if $\frak{P}$ is super essential then there exists a linear perturbation $\varepsilon$ such that $\dfres (\frak{P}_{\varepsilon})$ is a nonzero differential polynomial in $\cK[p]\{C\}$.
Let $A_{\varepsilon}(\frak{P})$ be as in Section \ref{sec-linDPPEs}. Since $A_{\varepsilon}(\frak{P})$ is a nonzero linear $\id$-primitive differential polynomial in  $(\ps(\frak{P}))\cap\bbD$, by Lemma \ref{lem-AE}, the next result follows.

\begin{prop}\label{prop-Aphi}
Given a super essential system $\frak{P}$, there exists a linear perturbation $\varepsilon$ and a nonzero $\alpha\in\cK$ such that $\alpha\dres(\frak{P})= A_{\varepsilon}(\frak{P})$.
\end{prop}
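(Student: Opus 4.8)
The plan is to reduce the statement to the following fact: if $A$ and $B$ are two nonzero \emph{linear} $\id$-primitive differential polynomials that both lie in the differential ideal $[A]_{\bbD}$, then $B=\alpha A$ for some nonzero $\alpha\in\cK$. First I would record the standing data. Since $\frak{P}$ is super essential it is in particular differentially essential (Remark \ref{rem-de-se}), so $\dres(\frak{P})$ is defined and, by Remark \ref{rem-dres}, $\id(\frak{P})=[\dres(\frak{P})]_{\bbD}$ with $\dres(\frak{P})$ a linear $\id$-primitive differential polynomial in $\id(\frak{P})$. By Theorem \ref{th-dres(Fphi)} I may fix a linear perturbation $\varepsilon$ with $\dfres(\frak{P}_{\varepsilon})$ a nonzero element of $\cK[p]\{C\}$, and then, as recalled just before the statement, Lemma \ref{lem-AE} gives that $A_{\varepsilon}(\frak{P})$ is a nonzero linear $\id$-primitive differential polynomial in $(\ps(\frak{P}))\cap\bbD$. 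Because $\ps(\frak{P})\subset\partial\frak{P}$ we have $(\ps(\frak{P}))\subseteq[\frak{P}]_{\bbD\{U\}}$, whence $A_{\varepsilon}(\frak{P})\in[\frak{P}]_{\bbD\{U\}}\cap\bbD=\id(\frak{P})=[\dres(\frak{P})]_{\bbD}$. So it remains to apply the fact above with $A=\dres(\frak{P})$ and $B=A_{\varepsilon}(\frak{P})$.

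To prove the fact, write $B=\sum_{k}b_k\,\partial^k A$ with $b_k\in\bbD=\cK\{C\}$. Now $A$, and hence each $\partial^k A$, is homogeneous of degree one in the family $\{c_{i,k}\}$ of derivatives of $C$ (which is algebraically independent over $\cK$), and, being linear and $\id$-primitive, $B$ has this same property by \eqref{eq-linID}. Comparing the degree-one homogeneous components, in the derivatives of $C$, of the two sides of $B=\sum_k b_k\,\partial^k A$ therefore yields $B=\cG(A)$, where $\cG:=\sum_k b_k^{(0)}\partial^k\in\cK[\partial]$ and $b_k^{(0)}\in\cK$ denotes the part of $b_k$ of degree zero in the derivatives of $C$; moreover $\cG\neq 0$ since $B\neq 0$. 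Writing $A=\sum_{i=1}^n\cE_i(c_i)$ and $B=\sum_{i=1}^n\cL_i(c_i)$ as in \eqref{eq-linID} and using $\partial^k(\cE_i(c_i))=(\partial^k\cE_i)(c_i)$, we get $\sum_i\cL_i(c_i)=\sum_i(\cG\cE_i)(c_i)$, hence $\cL_i=\cG\cE_i$ for $i=1,\ldots,n$.

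Finally I would invoke the right-ideal description of the greatest common left divisor in the Ore domain $\cK[\partial]$. From $\cL_i=\cG\cE_i$ one obtains $\cL_1\cK[\partial]+\cdots+\cL_n\cK[\partial]=\cG\bigl(\cE_1\cK[\partial]+\cdots+\cE_n\cK[\partial]\bigr)$, so $\gcld(\cL_1,\ldots,\cL_n)$ and $\cG\cdot\gcld(\cE_1,\ldots,\cE_n)$ differ by a unit of $\cK[\partial]$, i.e.\ by an element of $\cK\setminus\{0\}$. Since $A$ and $B$ are $\id$-primitive, $\dcont(A)=\gcld(\cE_1,\ldots,\cE_n)\in\cK$ and $\dcont(B)=\gcld(\cL_1,\ldots,\cL_n)\in\cK$, so this relation forces $\cG\in\cK\setminus\{0\}$. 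Setting $\alpha:=\cG$ gives $B=\cG(A)=\alpha A$; applying this to $B=A_{\varepsilon}(\frak{P})$ and $A=\dres(\frak{P})$ yields $A_{\varepsilon}(\frak{P})=\alpha\,\dres(\frak{P})$ with $\alpha\in\cK\setminus\{0\}$, as required. The two delicate points are the homogeneity bookkeeping in $C$, which is what lets one replace the $\bbD$-coefficients $b_k$ by a genuine operator $\cG\in\cK[\partial]$, and the use of the non-commutative identity for $\gcld$ under left multiplication; I expect the latter to be the step requiring the most care.
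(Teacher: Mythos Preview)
Your proof is correct and follows the same overall route as the paper: both arguments use Lemma~\ref{lem-AE} to place $A_{\varepsilon}(\frak{P})$ as a nonzero linear $\id$-primitive element of $(\ps(\frak{P}))\cap\bbD\subseteq\id(\frak{P})=[\dres(\frak{P})]_{\bbD}$, and then conclude that it must equal $\dres(\frak{P})$ up to a nonzero scalar in $\cK$.

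The only difference is one of detail. The paper treats the last implication as immediate (the relevant uniqueness of linear $\id$-primitive generators is established in \cite{R11}), whereas you supply a self-contained argument: the homogeneity-in-$C$ comparison that replaces $b_k\in\bbD$ by $b_k^{(0)}\in\cK$ to get a genuine operator $\cG\in\cK[\partial]$, followed by the right-ideal identity $\sum_i\cL_i\cK[\partial]=\cG\bigl(\sum_i\cE_i\cK[\partial]\bigr)$ forcing $\cG\in\cK^{\times}$ via $\id$-primitivity. Both steps are sound; your reasoning on the $\gcld$ side is exactly the standard fact that in the left/right Euclidean domain $\cK[\partial]$ the sum $\sum_i\cL_i\cK[\partial]$ is the principal right ideal generated by $\gcld(\cL_1,\ldots,\cL_n)$. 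So nothing is missing, and the extra detail you provide is a welcome unpacking of what the paper leaves implicit.
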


\para

From Proposition \ref{prop-Aphi} we can derive bounds for the order of $\dres (\frak{P})$ in the variables $c_1,\ldots ,c_n$.
Let us consider a differentially essential system $\frak{P}$, of generic sparse linear differential polynomials, and the super essential system $\frak{P}^*$ of $\frak{P}$. If $I^*:=\{i\in\{1,\ldots ,n\}\mid \bbF_i\in\frak{P}^*\}$ and $N^*:=\sum_{i\in I^*} o_i$ then, for $i=1,\ldots ,n$,
\begin{equation}\label{eq-bounds}
\begin{array}{l}
\ord(\dres(\frak{P}),c_i)=-1\mbox{ if }i\notin I^*,\\
\ord(\dres(\frak{P}),c_i)\leq N^*-o_i-\gamma(\frak{P}^*)\mbox{ if }i\in I^*.
\end{array}
\end{equation}

It was proved in \cite{R11} that $\dcres(\frak{P}_{\phi})\neq 0$, for a linear perturbation $\phi$, see Remark \ref{rem-psuR2}. Thus, given a differentially essential system $\frak{P}$ and reasoning as above $\ord (\dres(\frak{P}))\leq N-o_i-\hat{\gamma}$, $i=1,\ldots ,n$. Observe that \eqref{eq-bounds} is an improvement of $N-o_i-\hat{\gamma}$ whenever $\frak{P}$ is not super essential or if $\gamma (\frak{P})> \hat{\gamma}$.

It cannot be said that \eqref{eq-bounds} are the best bounds in the linear case, since an improvement (in some cases) has been just presented in \cite{LYG}, 5.2. But at least \eqref{eq-bounds} are the best bounds obtained so far from differential resultant formulas.

\section{Concluding remarks}\label{sec-Conclusion}

In this paper, a global approach to differential resultant formulas was provided for systems of linear nonhomogeneous ordinary differential polynomials $\cP$. In particular, the formula $\dfres(\cP)$ was defined, which is an improvement of the existing formulas since, it is the first one given as the determinant of a matrix $\cM(\cP)$ with nonzero columns, for $\cP$ super essential. In addition, every system $\cP$ was proved to have a super essential subsystem $\cP^*$ and therefore the formula $\dfres(\cP^*)$ can be computed in all cases.

Still $\dfres(\cP^*)$ may be zero and for this reason, given a system $\cP$ of linear DPPEs, the existence of a linear perturbation $\varepsilon$ such that $\dfres(\cP^*_{\varepsilon})\neq 0$ was proved. Therefore, to achieve differential elimination of the variables $U$ of the system $\cP$ the polynomial $A_{\varepsilon}(\cP^*)$ can always be used, which is a nonzero linear $\id$-primitive differential polynomial in $\id(\cP)\cap\bbD$, as in Section \ref{sec-linDPPEs}.

Certainly, there is room for improvement, regarding differential resultant formulas in the linear case.
For a generic system $\frak{P}$ of sparse linear nonhomogeneous ordinary differential polynomials, $\dfres(\frak{P}^*)$ may be zero in some cases. Finding a differential resultant formula for $\frak{P}^*$ that is nonzero in all cases would improve further the existing ones. It would be given by the determinant of a matrix of smaller size (in some cases) and the candidate to be the numerator of a Macaulay style formula for $\dres(\frak{P}^*)$. Nevertheless, even in such ideal situation perturbation methods will be needed to use such formulas for applications, namely to perform differential elimination with specializations $\cP$ of $\frak{P}$, see Example \ref{ex-fin}.

\vspace{0.5cm}{\bf \noindent Acknowledgements.}
I started this work during a research visit at the Institute of Mathematics, Goethe Universit\"at and I am grateful to T. Theobald and his group for providing a very pleasant work atmosphere.
I would like to thank X.S. Gao, W. Li and C.M. Yuan for their helpful comments.
 
This work was developed, and partially supported by the "Ministerio de Ciencia e Innovaci\'{o}n"  under the project MTM2008-04699-C03-01 and by the "Ministerio de Econom\'\i a y Competitividad" under the project MTM2011-25816-C02-01. I am a member of the Research Group ASYNACS (Ref. CCEE2011/R34).

\vspace{0.5cm}

\end{document}